\newcommand{\cal}[1]{\mathcal{#1}}
\theoremstyle{plain}
\newtheorem{theo}{Theorem}
\newtheorem{lemma}{Lemma}[section]
\newtheorem{theorem}[lemma]{Theorem}
\newtheorem{proposition}[lemma]{Proposition}
\newtheorem{corollary}[lemma]{Corollary}
\theoremstyle{definition}
\newtheorem{definition}[lemma]{Definition}
\let\egthree=\phi
\let\phi=\varphi
\let\varphi=\egthree
\begin{document}
\title{Dynamics of
the Teichm\"uller flow on compact invariant sets}
\author{Ursula Hamenst\"adt}
\thanks
{AMS subject classification: 37A35,30F60\\
Research
partially supported by DFG SFB 611}
\date{May 11, 2010}

\begin{abstract}
Let $S$ be
an oriented surface $S$ of genus
$g\geq 0$ with $m\geq 0$ punctures and $3g-3+m\geq 2$.
For a 
compact subset $K$ of the moduli space of area one holomorphic
quadratic differentials for $S$ let 
$\delta(K)$ be the asymptotic growth rate 
of the number of periodic orbits
for the Teichm\"uller flow $\Phi^t$ which are contained
in $K$. We relate
$\delta(K)$ to the topological entropy of the 
restriction of $\Phi^t$ to $K$.
Moreover, we show that 
$\sup_K\delta(K)=6g-6+2m$. 
\end{abstract}

\maketitle

\section{Introduction}

An oriented surface $S$ is called of finite type if $S$ is a
closed surface of genus $g\geq 0$ from which $m\geq 0$
points, so-called \emph{punctures},
have been deleted. We assume that 
$S$ is \emph{nonexceptional}, i.e. that $3g-3+m\geq 2$. This means
that $S$ is not a sphere with at most four
punctures or a torus with at most one puncture.
Since the Euler characteristic of $S$ is negative,
the \emph{Teichm\"uller space} ${\cal T}(S)$
of $S$ is the quotient of the space of all complete hyperbolic
metrics on $S$ of finite volume under the action of the
group of diffeomorphisms of $S$ which are isotopic
to the identity. 

The fibre bundle ${\cal Q}^1(S)$
over ${\cal T}(S)$ of all \emph{holomorphic
quadratic differentials} of area
one can naturally be viewed as the unit cotangent
bundle of ${\cal T}(S)$ for the \emph{Teichm\"uller metric}.
The \emph{Teichm\"uller geodesic flow} $\Phi^t$ on
${\cal Q}^1(S)$ commutes
with the action of the \emph{mapping class group}
${\rm Mod}(S)$ of all isotopy classes of
orientation preserving self-homeomorphisms of $S$.
Thus this flow descends to a flow
on the quotient 
${\cal Q}^1(S)/{\rm Mod}(S)$, again denoted
by $\Phi^t$. This quotient is a non-compact
orbifold.

In his seminal paper \cite{V86}, Veech showed that the asymptotic
growth rate of the number of periodic orbits of the Teichm\"uller
flow $\Phi^t$ on ${\cal Q}^1(S)/{\rm Mod}(S)$ is at least $h=6g-6+2m$ (we use
here a normalization for the Teichm\"uller flow which differs from
the one used by Veech). Recently Eskin and Mirzakhani \cite{EM08}
obtained a sharp counting result: They show that as $r\to
\infty$, the number of periodic orbits for $\Phi^t$ of period at
most $r$ is asymptotic to $e^{hr}/hr$. An earlier partial
result for the Teichm\"uller flow on the space
of abelian differentials is due to Bufetov \cite{Bu06}.

In this note we are interested in the 
dynamics of the restriction of the Teichm\"uller flow
to a compact invariant set.
For the formulation of our first result,
a continuous flow $\Phi^t$ on a compact metric space
$(X,d)$ is called \emph{expansive} if there is a constant $\delta >0$
with the following property. Let $x\in X$ and let $s:\mathbb{R}\to
\mathbb{R}$ be any continuous function with $s(0)=0$ and
$d(\Phi^t(x),\Phi^{s(t)}(x)) <\delta$ for all $t$. If $y\in X$ is
such that $d(\Phi^t(x),\Phi^{s(t)}(y))<\delta$ for all $t$ then
$y=\Phi^\tau(x)$ for some $\tau\in \mathbb{R}$ \cite{HK95}. 
Note that this definition of expansiveness does not
depend on the choice of the metric $d$ defining the
topology on $X$ so it makes sense to talk about an 
expansive flow on a compact metrizable space.

Let $\Gamma<{\rm Mod}(S)$ be a torsion free normal subgroup of
${\rm Mod}(S)$ of finite index. For example, the subgroup of all
elements which act trivially on 
$H_1(S,\mathbb{Z}/3\mathbb{Z})$ has this property.
Define 
$\hat {\cal Q}(S)={\cal Q}^1(S)/\Gamma.$
We show 

\begin{theo}\label{thm1}
The restriction of the Teichm\"uller flow to every
compact invariant subset $K$ of $\hat {\cal Q}(S)$ is
expansive.
\end{theo}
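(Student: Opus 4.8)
The plan is to lift the situation to $\mathcal{Q}^1(S)$ and to reformulate expansivity as a statement about the horizontal and vertical measured foliations of the differentials involved, the essential point being that a Teichm\"uller geodesic remaining in a compact part of moduli space has uniquely ergodic vertical and horizontal foliations. Since $\Gamma$ is torsion free and acts properly discontinuously and $K$ is compact, there is $\delta_0>0$ such that for every $q\in\mathcal{Q}^1(S)$ with $\pi(q)\in K$ the covering projection $\pi\colon\mathcal{Q}^1(S)\to\hat{\mathcal{Q}}(S)$ restricts to an isometry of the $\delta_0$--ball about $q$. Fix the expansivity constant $\delta\in(0,\delta_0)$, to be shrunk finitely often. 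Given $x,y\in K$ and a continuous $s$ with $s(0)=0$ and $d(\Phi^t x,\Phi^{s(t)}y)<\delta$ for all $t$, choose a lift $\hat x$ of $x$ and the lift $\hat y$ of $y$ in the $\delta$--ball about $\hat x$; by unique path lifting the curve $t\mapsto\Phi^{s(t)}\hat y$ lifts $t\mapsto\Phi^{s(t)}y$ and remains in the $\delta$--ball about $\Phi^t\hat x$, so $d(\Phi^t\hat x,\Phi^{s(t)}\hat y)<\delta$ in $\mathcal{Q}^1(S)$ for all $t$. Let $\mu(q),\nu(q)$ denote the horizontal and vertical measured foliations of $q$. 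By the theorems of Hubbard--Masur and Gardiner--Masur, $q\mapsto(\mu(q),\nu(q))$ identifies $\mathcal{Q}^1(S)$ with the space of jointly filling pairs of area one, and, the area normalization pinning down the scaling, two such differentials lie on a common $\Phi$--orbit precisely when their horizontal, respectively vertical, foliations are projectively equal. Hence it suffices to prove $[\mu(\hat x)]=[\mu(\hat y)]$ and $[\nu(\hat x)]=[\nu(\hat y)]$; then $\hat y=\Phi^\tau\hat x$ for some $\tau$, whence $y=\Phi^\tau x$.

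The decisive ingredient is that every $q$ with $\pi(q)\in K$ has both $\mu(q)$ and $\nu(q)$ uniquely ergodic: the ray $\{\Phi^t q:t\ge 0\}$ projects into the compact set $K$ and hence is not divergent in moduli space, so $\nu(q)$ is uniquely ergodic by Masur's criterion, and the same applied to the backward flow gives unique ergodicity of $\mu(q)$. I also record, using compactness of $K$, that $\inf_{q\in\pi^{-1}(K)}i(\mu(q),\nu(q))>0$ (horizontal and vertical foliations of a quadratic differential bind the surface), and that the Hubbard--Masur parametrization, transported by an element of $\Gamma$ into a compact fundamental region and combined with the uniform continuity of intersection number on compacta, yields a modulus of continuity: after shrinking $\delta$ we may assume that two differentials projecting into $K$ and lying within $\delta$ of each other in $\mathcal{Q}^1(S)$ have horizontal foliations, vertical foliations and the corresponding intersection numbers all within a prescribed small error of one another.

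Next one controls the reparametrization $s$. Since $t\mapsto\Phi^t\hat x$ is a Teichm\"uller geodesic traversed at constant speed and $\delta$--closeness in $\mathcal{Q}^1(S)$ forces, uniformly in $t$, closeness of the underlying points in $\mathcal{T}(S)$, the triangle inequality gives $|s(t)|\to\infty$ as $t\to\pm\infty$, so by continuity $s(t)$ tends to $+\infty$ or to $-\infty$ at each end. Writing $\operatorname{Ext}_q$ for extremal length on the Riemann surface underlying $q$, Kerckhoff's formula bounds the ratios $\operatorname{Ext}_{\Phi^{s(t)}\hat y}(\eta)/\operatorname{Ext}_{\Phi^t\hat x}(\eta)$ uniformly in $t$ and $\eta$. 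Suppose $s(t)\to-\infty$ as $t\to+\infty$. From $\operatorname{Ext}_{\Phi^t\hat x}(\nu(\hat x))=e^{-2t}\operatorname{Ext}_{\hat x}(\nu(\hat x))\to 0$ we obtain $\operatorname{Ext}_{\Phi^{s(t)}\hat y}(\nu(\hat x))\to 0$, while $\operatorname{Ext}_{\Phi^{s(t)}\hat y}(\mu(\hat y))=e^{2s(t)}\operatorname{Ext}_{\hat y}(\mu(\hat y))\to 0$; if $[\nu(\hat x)]\neq[\mu(\hat y)]$ then $i(\nu(\hat x),\mu(\hat y))>0$ by unique ergodicity of $\mu(\hat y)$, and Minsky's inequality $\operatorname{Ext}_X(\alpha)\operatorname{Ext}_X(\beta)\ge i(\alpha,\beta)^2$ forces $\operatorname{Ext}_{\Phi^{s(t)}\hat y}(\nu(\hat x))\to\infty$, a contradiction. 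So $[\nu(\hat x)]=[\mu(\hat y)]$; but then $\mu(\hat x)$ and $\nu(\hat x)$ are nearly proportional by the estimates of the previous paragraph applied at $t=0$, so $i(\mu(\hat x),\nu(\hat x))$ is nearly zero, contradicting $\inf_{\pi^{-1}(K)}i(\mu,\nu)>0$ once $\delta$ is small enough. A symmetric argument at $-\infty$ rules out $s(t)\to+\infty$ there, so $s(t)\to\pm\infty$ as $t\to\pm\infty$.

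The main step repeats the extremal length comparison in this orientation preserving regime: as $t\to+\infty$, $\operatorname{Ext}_{\Phi^t\hat x}(\nu(\hat x))=e^{-2t}\operatorname{Ext}_{\hat x}(\nu(\hat x))\to 0$ gives $\operatorname{Ext}_{\Phi^{s(t)}\hat y}(\nu(\hat x))\to 0$, while $\operatorname{Ext}_{\Phi^{s(t)}\hat y}(\nu(\hat y))=e^{-2s(t)}\operatorname{Ext}_{\hat y}(\nu(\hat y))\to 0$ because $s(t)\to+\infty$; were $[\nu(\hat x)]\neq[\nu(\hat y)]$, unique ergodicity of $\nu(\hat y)$ and Minsky's inequality would give $\operatorname{Ext}_{\Phi^{s(t)}\hat y}(\nu(\hat x))\ge i(\nu(\hat x),\nu(\hat y))^2/\operatorname{Ext}_{\Phi^{s(t)}\hat y}(\nu(\hat y))\to\infty$, absurd; hence $[\nu(\hat x)]=[\nu(\hat y)]$. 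Letting $t\to-\infty$ and using $\operatorname{Ext}_{\Phi^t\hat x}(\mu(\hat x))=e^{2t}\operatorname{Ext}_{\hat x}(\mu(\hat x))\to 0$ with $s(t)\to-\infty$ gives $[\mu(\hat x)]=[\mu(\hat y)]$ in the same way, and by the first paragraph $y=\Phi^\tau x$, so the flow on $K$ is expansive with constant $\delta$. I expect the genuine difficulties to be twofold: the unique ergodicity input, which is the one place where compactness of $K$ enters essentially --- and which fails, along with expansivity, on all of $\hat{\mathcal{Q}}(S)$ --- resting on Masur's criterion; and the uniform bookkeeping of the Hubbard--Masur and Kerckhoff estimates along the non-precompact lifted orbits, to be handled via $\Gamma$--equivariance and compactness of $K$. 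Note finally that one must work in the cotangent bundle $\mathcal{Q}^1(S)$ rather than in $\mathcal{T}(S)$: closeness of base points alone does not separate a differential from the one obtained by interchanging its horizontal and vertical foliations, and it is precisely this configuration that the orientation discussion in the third paragraph excludes.
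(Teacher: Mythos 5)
Your proposal is correct in its essentials, but it follows a genuinely different route from the paper. The paper never touches extremal length: it composes the orbit with the map $\Upsilon_{\cal T}$ to the curve graph, uses Theorem \ref{teichcurvecomp} to see that orbits whose projection stays in $K$ give uniform quasi-geodesics in ${\cal C}(S)$, identifies vertical projective laminations of points over $K$ with their endpoints in $\partial{\cal C}(S)$ (this is where unique ergodicity enters, exactly as in your argument, to make the measure-forgetting map injective), and then invokes the exponential contraction of the visual metrics $\delta_x$ along these quasi-geodesics (Lemma \ref{expansion} together with (\ref{deltacompare})) to conclude that a point staying $\beta$-close to the orbit of $q$ for all time lies in the local stable and the local unstable manifold of $q$, hence on its orbit; the reparametrization $s$ is tamed simply by the fact that orbits are unit-speed geodesics for the metric $d$, so $|t-s(t)|<\beta/2$. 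You instead argue entirely with classical Teichm\"uller-theoretic tools: Masur's criterion for unique ergodicity of both foliations (the same compactness input the paper uses), Kerckhoff's formula to transfer extremal-length decay across the $\delta$-fellow-traveling, and Minsky's inequality to force projective equality of the vertical and horizontal foliations, with a separate extremal-length argument excluding the orientation-reversing (``flip'') matching of $\nu(\hat x)$ with $\mu(\hat y)$ — the paper's proof handles that configuration implicitly by treating the flip ${\cal F}$ and both laminations symmetrically. Your route is more self-contained (it avoids the curve complex and the results of \cite{H10} entirely), whereas the paper's route produces the quantitative contraction estimates (\ref{contraction1}), (\ref{contraction10}) that are reused verbatim for the Anosov closing lemma in Section 4, so expansiveness there is really a byproduct of machinery needed later. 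The places you leave schematic — the uniform choice of $\delta$ via $\Gamma$-equivariance and a compact fundamental region, and the step ``$[\nu(\hat x)]=[\mu(\hat y)]$ plus $d(\hat x,\hat y)<\delta$ contradicts $i(\mu(\hat x),\nu(\hat x))=1$'' — do go through by the compactness argument you indicate (if it failed along a sequence, translating into a fundamental domain and passing to a limit would give $\mu(q)$ proportional to $\nu(q)$ for some $q$ over $K$, which is absurd), so I see no genuine gap, only details to be written out.
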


Periodic orbits of expansive flows on compact spaces 
are separated, 
and their asymptotic growth rate can be 
related to the \emph{topological entropy} of the flow.

For a compact $\Phi^t$-invariant subset $K$ of 
$\hat {\cal Q}(S)$ let
$h_{\rm top}(K)$ be the topological entropy of the restriction
of the Teichm\"uller flow to $K$. 
For any subset $U$ of $\hat{\cal Q}(S)$ (or of ${\cal Q}^1(S)/{\rm Mod}(S)$)
and for a number $r>0$
define $n_U(r)$ (or $n_U^\cap(r)$) to be the
cardinality of the set of all periodic orbits for $\Phi^t$
of period at most $r$ which are
entirely contained in $U$ (or which intersect $U$). 
Clearly $n_U^\cap(r)\geq n_U(r)$ for all $r$. We show

\begin{theo}\label{thm3}
Let $K\subset \hat {\cal Q}(S)$ be a compact
$\Phi^t$-invariant topologically transitive set. Then for
every open neighborhood $U$ of $K$ we have
\[\lim\sup_{r\to \infty}\frac{1}{r}\log n_K(r)\leq
h_{\rm top}(K)\leq \lim\inf_{r\to \infty}\frac{1}{r}\log n_U(r).\]
\end{theo}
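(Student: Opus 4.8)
The statement contains two inequalities: the upper bound follows from expansiveness (Theorem \ref{thm1}), while the lower bound rests on the variational principle together with the hyperbolicity of the Teichm\"uller flow.

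\emph{The upper bound} $\limsup_{r\to\infty}\frac1r\log n_K(r)\le h_{\rm top}(K)$. Fix a metric $d$ on $\hat{\cal Q}(S)$ compatible with the topology and let $\delta>0$ be the expansiveness constant for $\Phi^t|_K$ furnished by Theorem \ref{thm1}. Since $\Phi^t$ has no fixed points and $K$ is compact, the periods of periodic orbits contained in $K$ are bounded below by some $\rho>0$, and one may then appeal to the standard fact that an expansive fixed-point-free flow on a compact space has uniformly separated closed orbits: there exist $\epsilon\in(0,\delta)$ and $c>0$ such that distinct periodic orbits $\gamma_1,\gamma_2\subset K$ of period at most $r$ cannot be $\epsilon$-shadowed by one another along segments of length $r+c$, even after a continuous reparametrization of time --- for otherwise, closing the shadowing parameter up by periodicity, one obtains a continuous $s\colon\mathbb R\to\mathbb R$ with $s(0)=0$ and $d(\Phi^t x_1,\Phi^{s(t)}x_2)<\delta$ for all $t$ and all $x_i\in\gamma_i$, whence $x_2\in\gamma_1$ by expansiveness. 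Picking one point on each periodic orbit in $K$ of period at most $r$ therefore yields an $(r+c,\epsilon)$-separated set in the sense of Bowen, so $n_K(r)$ is bounded by the maximal cardinality $s_K(r+c,\epsilon)$ of such a set. As for expansive systems $\limsup_{r\to\infty}\frac1r\log s_K(r,\epsilon)=h_{\rm top}(K)$ for every $\epsilon<\delta$, the upper bound follows.

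\emph{The lower bound} $h_{\rm top}(K)\le\liminf_{r\to\infty}\frac1r\log n_U(r)$. Fix a neighborhood $U\supset K$ and $\eta>0$; it suffices to produce, for all sufficiently large $r$, at least $e^{(h_{\rm top}(K)-\eta)r}$ distinct periodic orbits of period at most $r$ that are entirely contained in $U$. By the variational principle and ergodic decomposition there is an ergodic $\Phi^t$-invariant Borel probability measure $\mu$ with $\operatorname{supp}\mu\subset K$ and $h_\mu(\Phi^1)\ge h_{\rm top}(K)-\eta/2$; if $h_{\rm top}(K)=0$ one only needs $n_U(r)\ge1$ eventually, which holds because periodic orbits of $\Phi^t$ (corresponding to pseudo-Anosov mapping classes) are dense in $\hat{\cal Q}(S)$, so we may assume $h_\mu>0$. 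Now one invokes the nonuniform hyperbolicity of the Teichm\"uller flow: transverse to the flow direction the derivative cocycle over $(\hat{\cal Q}(S),\Phi^t,\mu)$ has Lyapunov spectrum symmetric about $0$ with top exponent $1$, and $h_\mu>0$ forces the presence of positive exponents by the Ruelle inequality. Applying Katok's horseshoe theorem to $\mu$ produces a compact, locally maximal, $\Phi^t$-invariant hyperbolic set $\Lambda$ contained in the prescribed neighborhood $U$ of $\operatorname{supp}\mu$ on which $\Phi^t|_\Lambda$ is topologically conjugate to the suspension of a transitive subshift of finite type with $h_{\rm top}(\Lambda)\ge h_\mu-\eta/2\ge h_{\rm top}(K)-\eta$; for such a suspension flow the number of periodic orbits of period at most $r$ grows at exponential rate $h_{\rm top}(\Lambda)$, and each of these is a periodic orbit of $\Phi^t$ lying in $\Lambda\subset U$. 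Hence $n_U(r)\ge e^{(h_{\rm top}(K)-\eta)r}$ for all large $r$, and letting $\eta\to0$ gives the claim; topological transitivity of $K$ is what permits $U$ to be taken as an arbitrary neighborhood of the whole of $K$ rather than of a proper subpiece carrying the entropy.

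\emph{The main obstacle} is the lower bound, and precisely the construction of the horseshoe $\Lambda$ inside the prescribed neighborhood $U$ with entropy arbitrarily close to $h_{\rm top}(K)$. In the nonuniformly hyperbolic setting of the Teichm\"uller flow this requires controlling the a priori degenerating hyperbolicity estimates along $\mu$-typical orbits, and on the loci where the Kontsevich--Zorich spectrum has vanishing exponents one cannot apply Katok's theorem directly but must instead pass to the subbundle of nonzero Lyapunov directions (the relative, partially hyperbolic version of the horseshoe construction) or replace it altogether by an explicit symbolic coding of the Teichm\"uller flow --- via Rauzy--Veech induction or train-track splitting sequences --- designed so that the periodic orbits it manufactures are localized near $K$. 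A comparatively routine second point is the bookkeeping of flow reparametrizations in the expansiveness argument for the upper bound, together with the passage between flow and Poincar\'e-section counts.
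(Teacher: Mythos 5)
Your upper bound is essentially the paper's: expansiveness of $\Phi^t\vert_K$ (Theorem \ref{thm1}) makes periodic orbits separated, and the bound $\limsup\frac1r\log n_K(r)\leq h_{\rm top}(K)$ is then the standard fact for expansive flows (the paper simply quotes Proposition 3.2.14 of \cite{HK95}). That half is fine.

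The lower bound, however, has a genuine gap, and it sits exactly where you place ``the main obstacle''. Your route needs Katok's horseshoe theorem, whose hypotheses are not verified here: it requires (i) a $C^{1+\alpha}$ flow on a neighborhood of the compact invariant set, which is problematic for the Teichm\"uller flow since $K$ need not lie in a single stratum and the flow is only real-analytic stratum by stratum, and (ii) \emph{hyperbolicity} of the measure $\mu$, i.e.\ that \emph{all} Lyapunov exponents transverse to the flow are nonzero. Positive entropy plus the Ruelle inequality only yields \emph{some} nonzero exponents; if zero transverse exponents are present (the case $\lambda_i=1$ in the Kontsevich--Zorich spectrum for the given measure), Katok's theorem does not apply, and approximating the full entropy $h_\mu$ by horseshoes in the presence of zero exponents is not an available general result. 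You acknowledge this and gesture at fixes (passing to the nonzero-exponent subbundle, or a symbolic coding via Rauzy--Veech or train tracks), but none of these is carried out, so the crucial step is missing. The paper avoids smooth ergodic theory entirely: it proves an Anosov-type closing lemma (Theorem \ref{Anosov}) using the hyperbolicity of the curve graph and the comparison between Teichm\"uller geodesics and quasi-geodesics in ${\cal C}(S)$ (Theorem \ref{teichcurvecomp}), and then, in Corollary \ref{topup}, takes an $(m,\delta)$-separated set $E_m\subset K$, uses topological transitivity to return from $\Phi^mq$ to a uniformly bounded neighborhood of $q$ in bounded time, closes the resulting pseudo-orbit by a periodic orbit of period at most $m+N+2\epsilon_2$ lying in $U$, and bounds the multiplicity of the assignment $q\mapsto$ periodic orbit by a linear factor in $m$. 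Note also that transitivity plays a different role than you suggest: in the paper it furnishes the connecting orbit segment of uniformly bounded length, whereas in your scheme (where the horseshoe sits near ${\rm supp}\,\mu\subset K\subset U$) it is not actually used. To repair your argument you would either have to supply the Pesin-theoretic input for the Teichm\"uller flow on a neighborhood of an arbitrary compact invariant set, or replace the horseshoe construction by a shadowing/closing argument of the kind the paper develops.
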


It is not hard to see that 
Theorem \ref{thm1} and Theorem \ref{thm3} are equally valid
for compact invariant subsets of ${\cal Q}^1(S)/{\rm Mod}(S)$. However 
we did not find an argument which avoids using some 
differential geometric properties for the action of the mapping
class group on Teichm\"uller space which is not in the
spirit of this paper, so we omit a proof.

By the variational principle, the topological entropy of
a flow $\Phi^t$ on a compact space $K$ equals the supremum of the
metric entropies of all $\Phi^t$-invariant Borel probability
measures on $K$. Bufetov and Gurevich \cite{BG07} showed
that the supremum of the topological entropies of the
restriction of the Teichm\"uller flow to the moduli
space of \emph{abelian} differentials 
is just the metric entropy of the invariant probability
measure in the Lebesgue measure class, moreover this
Lebesgue measure is the unique measure of maximal entropy.
 
The following counting result implies that the entropy
$h$ of the $\Phi^t$-invariant Lebesgue measure on 
${\cal Q}^1(S)/{\rm Mod}(S)$
equals the supremum 
of the topological
entropies of the restrictions of the Teichm\"uller flow
to compact invariant sets.

\begin{theo}\label{thm2}
\begin{enumerate}
\item $\lim_{r\to \infty} \frac{1}{r}\log n^\cap_K(r)\leq h$
for every compact subset $K$ of \\
${\cal Q}^1(S)/{\rm Mod}(S)$.
\item For every $\epsilon >0$ there is a 
compact subset $K\subset {\cal Q}^1(S)/{\rm Mod}(S)$
(or $K\subset \hat{\cal Q}(S)$) such that 
\[\lim\inf_{r\to \infty} \frac{1}{r}\log n_K(r)\geq h-\epsilon.\]
\end{enumerate}
\end{theo}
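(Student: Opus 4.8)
\textbf{Sketch of the proof of Theorem \ref{thm2}.}
Part (1) is the easy half: the number $n^\cap_K(r)$ of periodic orbits of period at most $r$ meeting $K$ is bounded above by the total number of periodic orbits of period at most $r$ in ${\cal Q}^1(S)/{\rm Mod}(S)$, which by Eskin--Mirzakhani \cite{EM08} is asymptotic to $e^{hr}/hr$; hence $\limsup_{r\to\infty}\tfrac1r\log n^\cap_K(r)\le h$. Should one want an argument internal to this circle of ideas, one lifts to Teichm\"uller space: a periodic orbit of period $T\le r$ meeting $K$ is the projection of the invariant axis of a pseudo-Anosov $\phi\in{\rm Mod}(S)$ passing through $K$, so $d_{\rm Teich}(X_{0},\phi X_{0})\le r+{\rm const}$ for a fixed $X_{0}\in{\cal T}(S)$ in the compact projection of $K$; then $n^\cap_K(r)$ is dominated by the number of ${\rm Mod}(S)$-translates of $X_{0}$ in a Teichm\"uller ball of radius $r+{\rm const}$, and the exponential volume bound ${\rm vol}\,B(X_{0},R)\le{\rm const}\cdot e^{hR}$ together with a uniform lower bound for the covolume of the ${\rm Mod}(S)$-action finishes the estimate.

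For part (2) I would first reduce to the following purely dynamical statement: for every $\epsilon>0$ there is a compact $\Phi^t$-invariant topologically transitive set $K\subset\hat{\cal Q}(S)$ with $h_{\rm top}(K)\ge h-\epsilon$. Granted this, choose a relatively compact open neighbourhood $U$ of $K$ (possible since $\hat{\cal Q}(S)$ is locally compact) and apply Theorem \ref{thm3} to get $\liminf_{r\to\infty}\tfrac1r\log n_U(r)\ge h_{\rm top}(K)\ge h-\epsilon$; since $n_{\bar U}(r)\ge n_U(r)$, the compact set $\bar U\subset\hat{\cal Q}(S)$ proves the assertion for $\hat{\cal Q}(S)$, and its image in ${\cal Q}^1(S)/{\rm Mod}(S)$ proves it there, the covering being finite. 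To build $K$ I would start from the $\Phi^t$-invariant Lebesgue (Masur--Veech) probability measure $\mu$ on the principal stratum of $\hat{\cal Q}(S)$: it is ergodic, satisfies $h_\mu(\Phi^1)=h$, and carries a local product structure for the strong stable and unstable foliations of the Teichm\"uller flow. One then approximates $\mu$ from inside by a horseshoe in the style of Katok's theorem: inner regularity places an $(n,\delta)$-separated family of orbit segments of almost full $\mu$-measure inside a fixed compact set, and the local product structure, combined with the uniform factor $e^{ht}$ by which $\Phi^t$ dilates vertical transverse measures, lets one close these segments into genuine periodic orbits confined to a slightly larger compact set $K=K(\epsilon)$, a compact locally maximal transitive set with $h_{\rm top}(K)\ge h-\epsilon$. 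An essentially equivalent route replaces this by a symbolic coding of the Teichm\"uller flow on the principal stratum as a suspension over a countable topological Markov shift of Gurevich entropy $h$ (for instance via Rauzy--Veech induction): restricting to finite irreducible subalphabets of bounded type yields compact transitive invariant sets whose entropies tend to $h$, by Abramov's formula and boundedness of the roof function on each such subsystem.

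The step that demands real work is the confinement. The Teichm\"uller flow does not live on a compact manifold, and on the strata it is not complete — orbits may degenerate towards the boundary of moduli space — so Katok's horseshoe theorem is not available off the shelf, and, more to the point, one must ensure that the approximating horseshoe (equivalently, the retained finite subsystem of the coding) can be kept inside a fixed compact subset of $\hat{\cal Q}(S)$ while its entropy still approaches $h$. This amounts to a quantitative recurrence estimate — the Masur--Veech measure returns to large compact subsets with frequency tending to $1$, equivalently the Rauzy--Veech return time has an exponentially small tail — guaranteeing that discarding the cusp excursions (the unbounded combinatorial digits) costs only $o(1)$ in entropy. Once the confinement is secured, the residual steps, namely the shadowing/closing argument for the flow and the resulting exponential lower bound on the number of periodic orbits contained in $K$ (directly for a suspension of a subshift, or otherwise via Theorem \ref{thm3}), are routine.
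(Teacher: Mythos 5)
Part (1) of your proposal is fine and is in the same spirit as the paper: the paper does not invoke the full Eskin--Mirzakhani asymptotics but only the fact (from \cite{ABEM06}) that the critical exponent of the Poincar\'e series of ${\rm Mod}(S)$ equals $h$, which is exactly your lattice-point count of translates of a basepoint in a Teichm\"uller ball; either way the argument closes. The gap is in part (2). Your plan is to first manufacture a compact $\Phi^t$-invariant topologically transitive set of topological entropy at least $h-\epsilon$ and then apply Theorem \ref{thm3} to a relatively compact neighborhood. But the existence of such a set is precisely the hard content of the theorem, and neither mechanism you offer for producing it is available off the shelf. Katok's horseshoe theorem is a statement about smooth (non)uniformly hyperbolic systems on compact manifolds; its extension to the noncompact, non-complete Teichm\"uller flow on the full space of quadratic differentials is exactly what would have to be proved. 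The symbolic route via Rauzy--Veech induction concerns abelian differentials, and even there the claim that finite-alphabet subsystems of the suspension have entropies tending to $h$ is not a formal consequence of ``Gurevich entropy is a supremum over finite subshifts'': for the \emph{flow} one needs compactly supported invariant measures with $h_\mu/\int r\,d\mu\to h$, i.e.\ a genuine tail/entropy-at-infinity estimate controlling the entropy carried by cusp excursions. You assert this (``costs only $o(1)$ in entropy'') but do not prove it, and you yourself identify confinement as ``the step that demands real work'' before declaring the rest routine --- that step is the theorem.

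The paper's proof of Proposition \ref{final} avoids any entropy approximation by compact subsystems. It fixes a Lebesgue-typical recurrent point $q_0$, uses the conditional measures $\lambda_q$ of the Masur--Veech measure on strong unstable manifolds (which scale by $e^{ht}$ under $\Phi^t$) together with a Vitali covering relation on $\partial{\cal C}(S)$ from \cite{H09} to produce, for each large $T$, at least $e^{(h-\epsilon)T}$ elements $g_1,\dots,g_k\in{\rm Mod}(S)$ whose images of a fixed boundary ball are pairwise disjoint and nested inside it; these generate a free semigroup, every word defines a closed pseudo-orbit concentrated near $q_0$, and the second part of Lemma \ref{quasigeodesic} together with Theorem \ref{teichcurvecomp} shows the corresponding mapping classes are pseudo-Anosov with axes passing through a fixed compact neighborhood of $Pq_1$, so the associated periodic orbits all lie in one fixed compact set $C$. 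Finally, the overcounting coming from conjugacy within the semigroup is only linear in the period, giving $n_C(r)\geq e^{(h-\epsilon)r}/cr$. The statement you propose to prove first --- compact invariant sets of entropy at least $h-\epsilon$ --- is then deduced \emph{afterwards} (Remark 1 following Proposition \ref{final}, via Theorem \ref{thm3}), not used as input. As written, your proposal reduces the theorem to an unproved assertion essentially equivalent to it; to complete it you would have to supply the quantitative recurrence and entropy-approximation argument, which for the full space of quadratic differentials is not in the literature you invoke.
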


The first part of Theorem \ref{thm2} is immediate from the results
of Eskin and Mirzakhani, however the proof given here is
very short and easy. The lower bound for the growth of
periodic orbits which remain in a fixed compact set
is the technically most involved part of this work.

The main tool we use for the proofs
of the above resuls 
is the \emph{curve graph} ${\cal C}(S)$ 
of $S$ and the relation between
its geometry and the geometry of Teichm\"uller space.
In Section 2 we introduce the curve graph, and
we summarize some results from \cite{H10} in the form
used in the later sections.
In Section 3 we  
investigate the Teichm\"uller flow $\Phi^t$ on
$\hat{\cal Q}(S)$ 
and we show Theorem \ref{thm1}.
In Section 4 we use the results from Section 3 to
establish a version of the Anosov
closing lemma for the restriction of the
Teichm\"uller flow to compact invariant subsets of
$\hat{\cal Q}(S)$ and show Theorem \ref{thm3}. 
The proof of Theorem \ref{thm2} 
is contained in Section 5.

\section{The curve graph and its boundary}

Let $S$ be an oriented surface of genus $g\geq 0$ with 
$m\geq 0$ punctures and $3g-3+m\geq 2$. 
The \emph{curve graph} ${\mathcal C}(S)$ of $S$ is the
graph whose vertices are the 
free homotopy classes of \emph{essential}
simple closed curves on $S$, i.e. simple closed
curves which are neither contractible
nor freely homotopic into a puncture. Two such curves
are joined by an edge if and only if they
can be realized disjointly. 
Since $3g-3+m\geq 2$ by assumption,
${\mathcal C}(S)$ is connected (see \cite{MM99}).
However, the curve graph is locally infinite. 
In the sequel we often do not distinguish between
a simple closed curve on $S$ and its free homotopy class.
Also, if we write $\alpha\in {\cal C}(S)$ then
we always mean that $\alpha$ is an essential simple closed curve,
i.e. $\alpha$ is a vertex in the curve graph ${\cal C}(S)$.

Providing each edge in ${\mathcal C}(S)$ with the standard
euclidean metric of diameter 1 equips the curve graph
with the structure of a geodesic metric space.
Since ${\mathcal C}(S)$ is not locally finite,
this metric space $({\mathcal C}(S),d)$
is not locally compact. Masur
and Minsky \cite{MM99} showed that nevertheless its geometry
can be understood quite explicitly. Namely, ${\mathcal C}(S)$
is hyperbolic of infinite diameter.
The mapping class group ${\rm Mod}(S)$ 
naturally acts
on ${\cal C}(S)$ as a group of simplicial isometries. 
A mapping class $\phi\in {\rm Mod}(S)$ is \emph{pseudo-Anosov}
if the cyclic subgroup of ${\rm Mod}(S)$ generated by
$\phi$ acts on the curve graph ${\cal C}(S)$ with unbounded orbits.

A \emph{geodesic lamination} for a complete
hyperbolic structure on $S$ of finite volume is
a \emph{compact} subset of $S$ which is foliated into simple
geodesics.
A geodesic lamination $\lambda$ on $S$ is called \emph{minimal}
if each of its half-leaves is dense in $\lambda$. Thus a simple
closed geodesic is a minimal geodesic lamination. A minimal
geodesic lamination with more than one leaf has uncountably
many leaves and is called \emph{minimal arational}.
A geodesic lamination $\lambda$ is said to \emph{fill up $S$} if
every simple closed geodesic on $S$ intersects $\lambda$
transversely. This is equivalent to stating that 
the complementary components of $\lambda$ are all 
topological discs
or once punctured topological discs.

A \emph{measured geodesic lamination} is a geodesic lamination
$\lambda$ together with a translation invariant transverse
measure. Such a measure assigns a positive weight to each compact
arc in $S$ which intersects $\lambda$ nontrivially and
transversely and whose
endpoints are contained in complementary regions of
$\lambda$.
The geodesic lamination $\lambda$ is called the
\emph{support} of the measured geodesic lamination; it consists of
a disjoint union of minimal components. Vice versa, every
minimal geodesic lamination is the support of a measured
geodesic lamination.

The space ${\cal M\cal L} $ of measured geodesic laminations
on $S$ can be equipped with the weak$^*$-topology.
Its projectivization ${\cal P\cal M\cal L}$ is called
the space of \emph{projective measured geodesic laminations}
and is homeomorphic to the sphere $S^{6g-7+2m}$.
There is a continuous symmetric pairing $\iota:{\cal M\cal L}\times
{\cal M\cal L}\to (0,\infty)$, the so-called
\emph{intersection form}, which satisfies
$\iota(a\xi,b\eta)=ab\iota(\xi,\eta)$ for all $a,b\geq 0$ and
all $\xi,\eta\in {\cal M\cal L}$.
By the Hubbard Masur theorem (see \cite{Hu06}),
for every $x\in {\cal T}(S)$ the space
${\cal P\cal M\cal L}$ of projective measured geodesic laminations
can naturally be identified with the projectivized cotangent space
of ${\cal T}(S)$ at $x$. Moreover, a quadratic differential
$q\in {\cal Q}^1(S)$ can be viewed as a pair
$(\lambda,\nu)\in {\cal M\cal L}\times {\cal M\cal L}$
with $i(\lambda,\nu)=1$ and the additional property
that $i(\lambda,\zeta)+i(\nu,\zeta)>0$ for all $\zeta\in {\cal M\cal L}$.

Since ${\cal C}(S)$ is a hyperbolic geodesic metric space, it
admits a \emph{Gromov boundary} $\partial {\cal C}(S)$
which is
a (non-compact) metrizable topological space equipped with
an action of ${\rm Mod}(S)$ by homeomorphisms
(see \cite{BH99} for the definition of 
the Gromov boundary of a hyperbolic geodesic 
metric space and for references).  
Following Klarreich \cite{Kl99} (see also \cite{H06}), this
boundary can naturally be identified with the space of 
all (unmeasured)
minimal geodesic laminations which fill up $S$, equipped with
the topology which is induced from the weak$^*$-topology on 
${\cal P\cal M\cal L}$ via the measure forgetting map.

Now let ${\cal F\cal M\cal L}\subset {\cal P\cal M\cal L}$ be the
${\rm Mod}(S)$-invariant Borel subset of all projective
measured geodesic laminations whose support is minimal and
fills up $S$. The discussion in the previous paragraph shows that
there is a continuous ${\rm Mod}(S)$-invariant surjection
\begin{equation}\label{F}
F:{\cal F\cal M\cal L}\to \partial {\cal C}(S)\end{equation}
which associates to a projective measured geodesic
lamination in ${\cal F\cal M\cal L}$ its support.

Since the curve graph is a hyperbolic geodesic metric
space, for every $c\in {\cal C}(S)$ there is a 
\emph{visual metric}
$\delta_c$ of uniformly bounded diameter
on the Gromov boundary $\partial{\cal
C}(S)$ of ${\cal C}(S)$  
(we refer to Chapter III.H of 
\cite{BH99} for details of this construction and for
references). These distances are related to the 
intrinsic geometry of ${\cal C}(S)$ as follows.

For a point $c\in {\cal C}(S)$, 
the \emph{Gromov product} at $c$ associates to 
points $x,y\in {\cal C}(S)$ the value
\begin{equation}\label{gromovprod}
(x\vert y)_c=\frac{1}{2}(d(x,c)+d(y,c)-d(x,y)). \notag
\end{equation}
The Gromov product can be extended to a Gromov
product $(\,\vert\,)_c$ for pairs of distinct points
in $\partial {\cal C}(S)$ by defining
\begin{equation}\label{prodatinf}
(\xi\vert \zeta)_c=\sup\liminf_{i,j\to \infty}(x_i\vert y_j)_c
\end{equation}
where the supremum is taken over all sequences $(x_i)$ and
$(y_j)$ in ${\cal C}(S)$ such that
$\xi=\lim x_i$ and $\zeta=\lim y_j$.
There are
numbers $\beta >0,\nu \in (0,1)$ such that 
\begin{align}\label{distancecompare1}
\nu e^{-\beta (\xi\vert \zeta)_c}& \leq \delta_c(\xi,\zeta)\leq
e^{-\beta (\xi\vert \zeta)_c}\text{ for all }\xi,\zeta\in 
\partial{\cal C}(S)
\text{ and }\\
\delta_c & \leq e^{\beta
d(c,a)}\delta_a \text{ for all }c,a\in {\cal C}(S). \notag
\end{align} 
The distances $\delta_c$ are equivariant with respect to
the action of ${\rm Mod}(S)$ on ${\cal C}(S)$ and
on $\partial {\cal C}(S)$. 
For $c\in {\cal C}(S)$, $\xi\in \partial{\cal C}(S)$
and $r>0$ denote by
$D_c(\xi,r)\subset \partial{\cal C}(S)$ 
the ball of radius $r$ about $\xi$ with respect to
the distance function $\delta_c$.

We will need a 
more precise quantitative relation between the distance
functions $\delta_c$ $(c\in {\cal C}(S))$. 
Even though this property is well known,
we did not find an explicit reference in the literature
and we include a sketch of a proof.

For a formulation, 
for a number $m>1$, an \emph{$m$-quasi-geodesic} in 
a metric space $(X,d)$ is a map 
$\gamma:J\to X$ such that
\begin{equation}\label{quasigeo}
\vert s-t\vert/m-m\leq d(\gamma(s),\gamma(t))\leq 
m\vert s-t\vert +m\,\text{ for all } s,t\in J\end{equation}
where $J\subset \mathbb{R}$ is a closed 
connected set.  Since ${\cal C}(S)$
is hyperbolic, every
quasi-geodesic ray $\gamma:[0,\infty)\to {\cal C}(S)$
converges as $t\to \infty$ in ${\cal C}(S)\cup \partial{\cal C}(S)$ to
an endpoint $\gamma(\infty)\in \partial{\cal C}(S)$. 

\begin{lemma}\label{expansion}
For every $m>1$ there are constants 
$a(m)>1,b(m)>0,\alpha_0(m)>0$ with the
following property. Let $\gamma:[0,\infty)\to {\cal C}(S)$
be an $m$-quasi-geodesic ray with endpoint $\gamma(\infty)\in 
\partial{\cal C}(S)$. Then for all $t\geq0$ we have
\[\delta_{\gamma(0)}\leq a(m)e^{-b(m)t}\delta_{\gamma(t)}
\text{ on }D_{\gamma(t)}(\gamma(\infty),\alpha_0(m)).\]
\end{lemma}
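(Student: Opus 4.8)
The plan is to exploit the two inequalities in (\ref{distancecompare1}) together with standard thin-triangle estimates for the Gromov product along quasi-geodesics. The desired conclusion is a quantitative sharpening of the trivial bound $\delta_{\gamma(0)}\le e^{\beta d(\gamma(0),\gamma(t))}\delta_{\gamma(t)}$: instead of an exponential \emph{growth} factor $e^{\beta d}$ we want an exponential \emph{decay} factor $e^{-b(m)t}$, valid only on the small ball $D_{\gamma(t)}(\gamma(\infty),\alpha_0(m))$ around the endpoint. The reason to expect such a bound is geometric: two laminations $\xi,\zeta$ that are both very close to $\gamma(\infty)$ as seen from $\gamma(t)$ are ``seen together'' from $\gamma(0)$ for a distance comparable to $t$ longer, since geodesics from $\gamma(0)$ to $\xi$ and to $\zeta$ must both fellow-travel $\gamma$ for roughly the first $t$ units before diverging; hence $(\xi\mid\zeta)_{\gamma(0)}$ exceeds $(\xi\mid\zeta)_{\gamma(t)}$ by approximately $t$.

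First I would record the coarse geometry input. Since ${\cal C}(S)$ is $\eta$-hyperbolic for some $\eta>0$, there is a Morse-type constant $L=L(m,\eta)$ such that any $m$-quasi-geodesic ray lies in the $L$-neighborhood of a genuine geodesic ray with the same endpoint, and conversely. Combining this with the standard fact that for points $c,x,y$ and the Gromov product one has $(x\mid y)_c\ge d(c,w)-\mathrm{const}$ whenever $w$ lies on a geodesic $[x,y]$, I would prove the key quasi-geodesic estimate: there is $c_0=c_0(m,\eta)>0$ so that for all $\xi,\zeta\in\partial{\cal C}(S)$,
\begin{equation}\label{keyineq}
(\xi\mid\zeta)_{\gamma(0)}\;\ge\;(\xi\mid\zeta)_{\gamma(t)}+t-c_0
\qquad\text{provided}\quad (\xi\mid\zeta)_{\gamma(t)}\ge c_0 .\notag
\end{equation}
The hypothesis $(\xi\mid\zeta)_{\gamma(t)}\ge c_0$ is exactly what a suitably small $\alpha_0(m)$ guarantees: by the lower inequality in (\ref{distancecompare1}), $\xi,\zeta\in D_{\gamma(t)}(\gamma(\infty),\alpha_0(m))$ forces $\delta_{\gamma(t)}(\xi,\zeta)\le 2\alpha_0(m)$, hence $(\xi\mid\zeta)_{\gamma(t)}\ge -\tfrac1\beta\log(2\alpha_0(m))\ge c_0$ once $\alpha_0(m)$ is chosen small enough.

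Next I would feed (\ref{keyineq}) through the two-sided comparison (\ref{distancecompare1}) to pass from Gromov products back to the visual metrics. On the ball $D_{\gamma(t)}(\gamma(\infty),\alpha_0(m))$ we get
\begin{equation}\label{chain}
\delta_{\gamma(0)}(\xi,\zeta)\le e^{-\beta(\xi\mid\zeta)_{\gamma(0)}}
\le e^{\beta c_0}e^{-\beta t}e^{-\beta(\xi\mid\zeta)_{\gamma(t)}}
\le \frac{e^{\beta c_0}}{\nu}\,e^{-\beta t}\,\delta_{\gamma(t)}(\xi,\zeta),\notag
\end{equation}
so that the claimed inequality holds with $b(m)=\beta$ and $a(m)=e^{\beta c_0(m)}/\nu$. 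One should also handle the degenerate case in which $\xi=\zeta$, where both sides vanish and there is nothing to prove, and note that the map $F$ of (\ref{F}) and the identification of $\partial{\cal C}(S)$ with filling laminations play no role here beyond making ``$\gamma(\infty)$'' and the $\delta_c$ well defined.

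The main obstacle is proving the quasi-geodesic Gromov-product estimate (\ref{keyineq}) cleanly. The subtle point is that $\gamma$ is only a \emph{ray}, not a bi-infinite geodesic, and $\xi,\zeta$ are boundary points, so one must work with the definition (\ref{prodatinf}) of the Gromov product at infinity: choose sequences $x_i\to\xi$, $y_j\to\zeta$ realizing the supremum, apply the finite-point hyperbolic inequality $(x_i\mid y_j)_{\gamma(0)}\ge \min\{(x_i\mid y_j)_{\gamma(t)}, d(\gamma(0),\gamma(t))\}-\mathrm{const}$ (valid up to a uniform additive error in a $\delta$-hyperbolic space, since $\gamma(t)$ lies within bounded distance of a geodesic from $\gamma(0)$ toward either $x_i$ or $y_j$ when those points are close to the common direction $\gamma(\infty)$), use $d(\gamma(0),\gamma(t))\ge t/m-m$ from (\ref{quasigeo}), and take $\liminf$. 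The bookkeeping of additive constants and the verification that ``close to $\gamma(\infty)$ from $\gamma(t)$'' really does force the fellow-traveling with $\gamma$ near $\gamma(0)$ is where the Morse lemma for $m$-quasi-geodesics is essential; absorbing the resulting $m$-dependence into $c_0(m)$, $\alpha_0(m)$ is routine but must be done with care so that all constants depend only on $m$ (and the fixed hyperbolicity constant of ${\cal C}(S)$), as the statement demands.
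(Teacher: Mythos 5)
Your overall route is the same as the paper's: both arguments reduce the lemma to showing that, for boundary points close to $\gamma(\infty)$ as seen from $\gamma(t)$, the Gromov product based at $\gamma(0)$ exceeds the one based at $\gamma(t)$ by roughly the distance travelled along the ray, and then translate this into the visual metrics via (\ref{distancecompare1}); the paper packages the gain through centers of quasi-geodesic ideal triangles and the estimate (\ref{center}). However, your key step needs repair in two places. First, your displayed key inequality is false with the stated proviso: largeness of $(\xi\vert\zeta)_{\gamma(t)}$ alone does not force the geodesics from $\gamma(0)$ to $\xi$ and $\zeta$ to fellow-travel $\gamma$. Take $\xi,\zeta$ extremely close to each other but clustered in a direction far from $\gamma(\infty)$ (for instance pointing back towards $\gamma(0)$) as seen from $\gamma(t)$: then $(\xi\vert\zeta)_{\gamma(t)}$ is as large as you like, while $(\xi\vert\zeta)_{\gamma(0)}$ can be \emph{smaller} than $(\xi\vert\zeta)_{\gamma(t)}$, not larger by $t$. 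The hypothesis must be that $(\xi\vert\gamma(\infty))_{\gamma(t)}$ and $(\zeta\vert\gamma(\infty))_{\gamma(t)}$ are both large, i.e. exactly the membership $\xi,\zeta\in D_{\gamma(t)}(\gamma(\infty),\alpha_0(m))$ provided by the lemma; your closing paragraph implicitly uses this, but your formulation (and the sentence claiming that $(\xi\vert\zeta)_{\gamma(t)}\geq c_0$ is ``exactly'' what is needed) does not.

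Second, the tool you propose, the inequality $(x_i\vert y_j)_{\gamma(0)}\geq\min\{(x_i\vert y_j)_{\gamma(t)},d(\gamma(0),\gamma(t))\}-{\rm const}$, cannot deliver the additive gain even where it holds: when $(\xi\vert\zeta)_{\gamma(t)}$ is much larger than $d(\gamma(0),\gamma(t))$ it only gives the absolute bound $(\xi\vert\zeta)_{\gamma(0)}\geq d(\gamma(0),\gamma(t))-{\rm const}$, hence $\delta_{\gamma(0)}(\xi,\zeta)\leq{\rm const}\cdot e^{-\beta t/m}$, which is not a bound of the form $e^{-bt}\delta_{\gamma(t)}(\xi,\zeta)$ when $\delta_{\gamma(t)}(\xi,\zeta)$ is tiny. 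What you need is the exact identity $(x\vert y)_{\gamma(0)}=(x\vert y)_{\gamma(t)}+d(\gamma(0),\gamma(t))-(x\vert\gamma(0))_{\gamma(t)}-(y\vert\gamma(0))_{\gamma(t)}$ together with a bound $(x\vert\gamma(0))_{\gamma(t)}\leq c(m)$, $(y\vert\gamma(0))_{\gamma(t)}\leq c(m)$ obtained from the Morse lemma and the closeness of $x,y$ to the direction $\gamma(\infty)$; this is in effect what the paper's center estimate (\ref{center}) encodes (the center of the ideal triangle over $\gamma(0)$ sits beyond $\gamma(t)$ along $\gamma$). Finally, the gain is $d(\gamma(0),\gamma(t))\geq t/m-m$ by (\ref{quasigeo}), not $t$, so the argument yields $b(m)=\beta/m$ rather than $b(m)=\beta$; this is harmless for the statement, but your claimed constants are inconsistent with your own use of the quasi-geodesic lower bound.
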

\begin{proof}
Since ${\cal C}(S)$ is a hyperbolic geodesic metric space, 
there is a constant $p>1$ depending on the hyperbolicity constant
such that every point $c\in {\cal C}(S)$
can be connected to every point $\xi\in \partial{\cal C}(S)$ by
a $p$-quasi-geodesic (for the particular case of the curve
graph see \cite{Kl99,H06,H10}).
Similarly, any two points $\xi\not=\zeta\in \partial{\cal C}(S)$
can be joined by a $p$-quasi-geodesic.

Let $m\geq p$. 
By hyperbolicity, there is a number $r(m)>0$ 
and for every $m$-quasi-geodesic 
triangle $T$ in ${\cal C}(S)$ 
with vertices $c\in {\cal C}(S),\xi \not=\zeta\in 
\partial{\cal C}(S)$ there is a point $u \in {\cal C}(S)$
whose distance to each of the sides of $T$  
is at most $r(m)$. The (non-unique) point $u$ is called
a \emph{center} of $T$. We claim that there
is a number $\chi(m)>0$ only depending on $m$ and the
hyperbolicity constant such that
\begin{equation}\label{center}
\delta_c(\xi,\zeta)\in [\chi(m) e^{-\beta d(c,u)},
e^{-\beta d(c,u)}/\chi(m)].
\end{equation}

Namely, let $\gamma_1,\gamma_2:[0,\infty)\to 
{\cal C}(S)$ be $m$-quasi-geodesic rays
which connect $c$ to $\xi,\zeta$. 
There is a universal constant $b>0$ only depending on the
hyperbolicity constant for ${\cal C}(S)$ 
such that
\begin{equation}\label{quasigromov}
(\gamma_1(\infty) \vert\gamma_2(\infty))_c-b\leq 
\liminf_{s,t\to \infty} (\gamma_1(s)\vert \gamma_2(t))_c
\leq (\gamma_1(\infty)\vert \gamma_2(\infty))_c
\end{equation}
(Remark 3.17.5 in Chapter III.H of \cite{BH99}).

By hyperbolicity, for sufficiently large $s,t$ we have
\[\vert d(c,\gamma_1(s))+d(c,\gamma_2(t))-
d(\gamma_1(s),\gamma_2(t))-2d(u,c)\vert \leq a\]
where $a>0$ is a constant only depending on the 
hyperbolicity constant for ${\cal C}(S)$ and on $m$.
Together with (\ref{prodatinf},\ref{distancecompare1},\ref{quasigromov}) 
this shows the estimate (\ref{center}).

Now let $\gamma:[0,\infty)\to {\cal C}(S)$ be any
$m$-quasi-geodesic ray with endpoint 
$\gamma(\infty)=\xi\in \partial{\cal C}(S)$,
let $\zeta\not=\xi\in \partial{\cal C}(S)$ and let $T$ 
be an $m$-quasi-geodesic triangle with side $\gamma$ and vertices
$\gamma(0),\xi,\zeta$. 
If $u\in {\cal C}(S)$ is a center for $T$ and if 
$\sigma\geq 0$ is 
such that $d(u,\gamma(\sigma))\leq r(m)$, then for every $s\in [0,\sigma]$
the distance between $u$ and a center for any $m$-quasi-geodesic
triangle with vertices $\gamma(s),\xi,\zeta$ is bounded from
above by a constant only depending on $m$ and the
hyperbolicity constant for ${\cal C}(S)$. 
In particular, by the
above discussion and the properties of an
$m$-quasi-geodesic, there are constants $a(m)>0,b(m)>0$ such that 
\begin{equation}
\delta_{\gamma(0)}(\xi,\zeta)
\leq a(m)e^{-b(m)s}\delta_{\gamma(s)}(\xi,\zeta)
\text{ for every }s\in [0,\sigma].
\end{equation}
From this the lemma follows.
\end{proof}

By a result of Bers, there is a constant $\chi_0=\chi_0(S)>0$
such that for every complete hyperbolic metric $x$ on $S$ of 
finite volume there is a \emph{pants decomposition}
for $S$ consisting of simple closed geodesics
of $x$-length at most $\chi_0$.
Define a map 
\begin{equation}\label{upsilon}
\Upsilon_{\cal T}:{\cal T}(S)\to {\cal C}(S)
\end{equation}
by associating
to a marked hyperbolic metric $x\in {\cal T}(S)$
a simple closed curve $\Upsilon_{\cal T}(x)$ whose
$x$-length $\ell_x(\Upsilon_{\cal T}(x))$ does not exceed $\chi_0$. 
There are choices involved in the definition of $\Upsilon_{\cal T}(x)$,
but for any two such choices and any $x$ the distance between
the images of $x$ is uniformly bounded. 

Denote by $d_T$ the distance on ${\cal T}(S)$ defined
by the Teichm\"uller metric.
Lemma 2.2 of \cite{H10}
shows that there is a number $L>1$ such that
\begin{equation}\label{upsilonlipschitz}
d(\Upsilon_{\cal T}(x),\Upsilon_{\cal T}(y))\leq Ld_{\cal T}(x,y)+L\,
\text{ for all }x,y\in {\cal T}(S).
\end{equation}

Choose a smooth function
$\sigma:[0,\infty)\to [0,1]$ with $\sigma[0,\chi_0]\equiv 1$ and
$\sigma[2\chi_0,\infty)\equiv 0$. 
For every $x\in {\cal T}(S)$ we obtain a finite Borel
measure $\mu_x$ on ${\cal C}(S)$ by defining
\begin{equation}\label{mu}
\mu_x=\sum_\beta \sigma(\ell_x(\beta))\delta_\beta
\end{equation}
where $\delta_\beta$ denotes the Dirac mass at $\beta$.
The total mass of $\mu_x$ is bounded from
above and below by a universal positive constant, and
the diameter of the support of $\mu_x$ in ${\cal C}(S)$
is uniformly bounded as well. The measures
$\mu_x$ are equivariant with respect to the action of 
the mapping class group on ${\cal T}(S)$ and
${\cal C}(S)$, and they 
depend continuously on $x\in {\cal T}(S)$
in the weak$^*$-topology. This means that for every
bounded function $f:{\cal C}(S)\to \mathbb{R}$
the function $x\to \int f d\mu_x$ is continuous.

For $x\in {\cal
T}(S)$ define a distance $\delta_x$ on $\partial {\cal C}(S)$ by
\begin{equation}\label{distance}
\delta_x(\xi,\zeta)=\int \delta_c(\xi,\zeta)
d\mu_x(c). \end{equation}
Clearly the metrics
$\delta_x$ are equivariant with respect to the
action of ${\rm Mod}(S)$ on ${\cal T}(S)$ and
$\partial{\cal C}(S)$. Moreover, there is a constant
$\kappa >0$ such that
\begin{equation}\label{deltacompare}
\delta_x\leq e^{\kappa d_T(x,y)}\delta_y\text{ and }
\kappa^{-1}\delta_x\leq
\delta_{\Upsilon_{\cal T}(x)}\leq \kappa\delta_x
\end{equation}
for all $x,y\in {\cal T}(S)$ (see p.230 and p.231 of \cite{H09}).

The main theorem of \cite{H10} relates the geometry
of Teichm\"uller space to the geometry of the curve
graph via the map $\Upsilon_{\cal T}$. For its formulation,
denote for $\epsilon >0$ by ${\cal T}(S)_\epsilon$ the 
set of all hyperbolic metrics whose \emph{systole} 
(i.e. the shortest length of a closed geodesic) is at least $\epsilon$.
For sufficiently small $\epsilon$ the set ${\cal T}(S)_\epsilon$ is connected,
and the mapping class group acts properly and cocompactly on 
${\cal T}(S)_\epsilon$. 

\begin{theorem}\label{teichcurvecomp}
\begin{enumerate}
\item For every $L>1$ there is a number $\epsilon=\epsilon(L)>0$
with the following property. Let $J\subset \mathbb{R}$
be a closed connected set of length at least $1/\epsilon$ and let
$\gamma:J\to {\cal T}(S)$ be an $L$-quasi-geodesic.
If $\Upsilon_{\cal T}\circ \gamma$
is an $L$-quasi-geodesic in ${\cal C}(S)$ then 
there is a Teichm\"uller geodesic
$\xi:J^\prime\to {\cal T}(S)_\epsilon$ such that
the Hausdorff distance between 
$\gamma(J)$ and $\xi(J^\prime)$ is at most $1/\epsilon$.
\item For every $\epsilon >0$ there is a number
$L(\epsilon)>1$ with the following property.
Let $J\subset \mathbb{R}$ be a closed connected
set and let $\gamma:J\to {\cal T}(S)$ be a
$1/\epsilon$-quasi-geodesic. If
there is a Teichm\"uller geodesic arc
$\xi:J^\prime\to {\cal T}(S)_\epsilon$ such that the
Hausdorff distance between 
$\gamma(J)$ and $\xi(J^\prime)$ is at most $1/\epsilon$ then
$\Upsilon_{\cal T}\circ \gamma$ is an $L(\epsilon)$-quasi-geodesic
in ${\cal C}(S)$.
\end{enumerate}
\end{theorem}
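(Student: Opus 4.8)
The plan is to obtain the two implications from three standard ingredients: the hyperbolicity of ${\cal C}(S)$ together with the stability of quasi-geodesics in a hyperbolic space; the bounded geodesic image theorem of Masur and Minsky for subsurface projections; and Rafi's combinatorial description of the thin parts encountered along a Teichm\"uller geodesic, together with the resulting distance formula for $d_{\cal T}$. Throughout one uses the coarse Lipschitz estimate (\ref{upsilonlipschitz}) and the defining property $\ell_x(\Upsilon_{\cal T}(x))\leq\chi_0$. It is enough to treat the case that $J=[a,b]$ is a bounded interval: for general $J$ one applies the bounded case to an exhaustion by bounded subintervals, the constants being uniform, and extracts a limit of the resulting Teichm\"uller geodesics, which again lies in the closed set ${\cal T}(S)_\epsilon$ on which ${\rm Mod}(S)$ acts cocompactly.

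\emph{Part (2).} Since $\gamma$ lies in the $1/\epsilon$-neighborhood of a Teichm\"uller geodesic $\xi\colon J^\prime\to{\cal T}(S)_\epsilon$, is a quasi-geodesic, and $\Upsilon_{\cal T}$ is coarsely $L$-Lipschitz, it suffices to show that $t\mapsto\Upsilon_{\cal T}(\xi(t))$ is a parametrized quasi-geodesic with constants depending only on $\epsilon$. The upper bound on its speed is immediate from (\ref{upsilonlipschitz}). For the lower bound I would use the theorem of Masur and Minsky that $\Upsilon_{\cal T}$ carries every Teichm\"uller geodesic to an \emph{unparametrized} quasi-geodesic in ${\cal C}(S)$; the only way this fails to be parametrized is a long pause, and by Rafi's analysis a pause of $\xi$ occurs precisely where $\xi$ runs through a thin part, with length comparable to $\log(1/\delta)$ where $\delta$ is the smallest length attained there. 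Because $\xi\subset{\cal T}(S)_\epsilon$ each such $\delta$ is at least $\epsilon$, so all pauses are bounded in terms of $\epsilon$; combined with the upper speed bound this gives the required lower bound $d(\Upsilon_{\cal T}(\xi(s)),\Upsilon_{\cal T}(\xi(t)))\geq\vert s-t\vert/L(\epsilon)-L(\epsilon)$.

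\emph{Part (1).} Let $\gamma\colon[a,b]\to{\cal T}(S)$ and $\Upsilon_{\cal T}\circ\gamma$ be $L$-quasi-geodesics and let $\xi$ be the Teichm\"uller geodesic from $\gamma(a)$ to $\gamma(b)$. The first and crucial step is to show that $d_Y(\gamma(a),\gamma(b))\leq M_0(L)$ for every proper subsurface $Y$, annuli included, where $d_Y$ denotes subsurface projection distance. If $d_Y(\gamma(a),\gamma(b))$ were large, the bounded geodesic image theorem would force $\partial Y$ to lie within distance one of every ${\cal C}(S)$-geodesic joining $\Upsilon_{\cal T}(\gamma(a))$ to $\Upsilon_{\cal T}(\gamma(b))$; since $\Upsilon_{\cal T}\circ\gamma$ stays within bounded distance of such a geodesic and the $Y$-projection is coarsely constant along a ${\cal C}(S)$-geodesic outside a bounded neighborhood of $\partial Y$, one can find parameters $s^*<t^*$ with $d(\Upsilon_{\cal T}(\gamma(s^*)),\Upsilon_{\cal T}(\gamma(t^*)))$ bounded but $d_Y(\gamma(s^*),\gamma(t^*))\geq d_Y(\gamma(a),\gamma(b))-O(1)$. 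As $\Upsilon_{\cal T}\circ\gamma$ is an $L$-quasi-geodesic, $|s^*-t^*|$ and hence $d_{\cal T}(\gamma(s^*),\gamma(t^*))$ are bounded, while Rafi's distance formula bounds $d_{\cal T}(\gamma(s^*),\gamma(t^*))$ below by $d_Y(\gamma(s^*),\gamma(t^*))$, or by its logarithm if $Y$ is an annulus; this is a contradiction once $d_Y(\gamma(a),\gamma(b))$ exceeds a constant depending only on $L$. The second step is Rafi's thick--thin description: a Teichm\"uller geodesic whose endpoints have all proper subsurface coefficients bounded by $M_0(L)$ remains in ${\cal T}(S)_{\epsilon_1}$ for some $\epsilon_1=\epsilon_1(L)>0$, so $\xi\subset{\cal T}(S)_{\epsilon_1}$. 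The third step is that a Teichm\"uller geodesic contained in a fixed thick part is strongly contracting (Minsky), hence a stable geodesic in $({\cal T}(S),d_{\cal T})$; therefore the $L$-quasi-geodesic $\gamma$, which has the same endpoints as $\xi$, lies within a bounded Hausdorff neighborhood of $\xi$ and conversely. Choosing $\epsilon(L)>0$ smaller than $\epsilon_1(L)$ and than the reciprocal of this Hausdorff bound gives the conclusion.

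The soft hyperbolic geometry of ${\cal C}(S)$ is routine; the substantive point, and what I expect to be the main obstacle, is the quantitative dictionary underlying the first step of part (1) and the lower bound of part (2) --- passing between the Teichm\"uller metric and the geometry of ${\cal C}(S)$ and its proper subsurfaces with all constants controlled by the quasi-geodesicity constant alone. This is precisely what Rafi's combinatorial model for the Teichm\"uller metric and the Masur--Minsky hierarchy provide, and it is this machinery, recast in the more concrete language of train tracks and their splitting sequences, that is developed in \cite{H10}.
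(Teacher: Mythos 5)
The paper itself contains no proof of Theorem \ref{teichcurvecomp}: it is quoted verbatim as the main theorem of \cite{H10}, where the argument is carried out with train tracks and splitting sequences. Your proposal is a correct outline, but it proves the statement by a genuinely different route, namely by assembling it from Rafi's thick--thin technology (characterization of short curves along a Teichm\"uller geodesic, the distance formula, linear progress of the ${\cal C}(S)$-shadow of a thick geodesic), the Masur--Minsky bounded geodesic image theorem, and Minsky's contraction theorem for geodesics in the thick part; this is the standard "hierarchy-style" proof and each of the three steps of your part (1) and the pause argument of part (2) does go through. What the two approaches buy: yours is short but imports substantial published machinery, and it needs the version of Rafi's characterization for finite geodesic segments, phrased in terms of subsurface coefficients of the endpoint markings rather than of the vertical and horizontal foliations -- a point to cite with care; the route of \cite{H10} is more self-contained and produces the uniform constants in the concrete form used elsewhere in this paper. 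Two small repairs to your write-up: the duration of a pause is not comparable to $\log(1/\delta)$ in general (when a non-annular subsurface coefficient is responsible, the thin interval has length comparable to the coefficient itself, not to its logarithm), but for part (2) you only need that an $\epsilon$-thick geodesic makes definite progress in ${\cal C}(S)$ in bounded time, which is exactly what Rafi's result gives; and the bounded geodesic image theorem is stated for geodesics, so apply it to the quasi-geodesic shadow $\Upsilon_{\cal T}\circ\gamma$ via stability of quasi-geodesics in the hyperbolic graph ${\cal C}(S)$, with constants depending only on $L$, before extracting the parameters $s^*<t^*$.
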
 

Let ${\cal Q}^1(S)$ be the bundle of area
one quadratic differentials over Teichm\"uller space
${\cal T}(S)$ for $S$.
The mapping class group ${\rm Mod}(S)$ acts
properly discontinuously on ${\cal Q}^1(S)$ as
a group of bundle automorphisms. This action commutes with
the action of the Teichm\"uller geodesic flow $\Phi^t$.

An area one quadratic differential $q\in {\cal Q}^1(S)$
is determined by a pair $(q_v,q_h)$ of measured 
geodesic laminations, the  
\emph{vertical} and the 
\emph{horizontal} measured geodesic lamination of $q$, respectively.
For every $t\in \mathbb{R}$ the pair $(e^tq_v,e^{-t}q_h)$ corresponds
to the quadratic differential $\Phi^tq$.
The \emph{strong stable
manifold} 
\[W^{ss}(q)\subset {\cal Q}^1(S)\] of $q$ 
is defined as the set
of all quadratic differentials of area one whose vertical
measured geodesic lamination coincides \emph{precisely} with the
vertical measured geodesic lamination of $q$. The \emph{strong
unstable} manifold 
\[W^{su}(q)\subset {\cal Q}^1(S)\] is 
the set of all quadratic differentials of area one whose
horizontal measured geodesic lamination coincides
precisely with the horizontal measured geodesic lamination
of $q$. Define moreover the \emph{stable manifold} $W^s(q)$ of 
$q$ and the \emph{unstable manifold} $W^u(q)$ of $q$ by
$W^s(q)=\cup_{t\in \mathbb{R}}\Phi^tW^{ss}(q)$ and 
$W^u(q)=\cup_{t\in \mathbb{R}}\Phi^tW^{su}(q)$. 
As $q$ varies through ${\cal Q}^1(S)$,
the manifolds $W^{ss}(q), W^{su}(q),W^s(q),W^u(q)$
define continuous foliations of ${\cal Q}^1(S)$ which are 
called the \emph{strong stable}, the
\emph{strong unstable}, the \emph{stable} and the
\emph{unstable foliation}. These foliations 
are invariant under the action of ${\rm Mod}(S)$ and under the
action of the Teichm\"uller geodesic flow $\Phi^t$.

For every $q\in {\cal Q}^1(S)$ the map which associates 
to a quadratic differential its vertical
measured geodesic
lamination restricts to a homeomorphism of the 
unstable manifold $W^u(q)$ containing $q$ onto an open
dense subset of ${\cal M\cal L}$. The space ${\cal M\cal L}$
admits a natural ${\rm Mod}(S)$-invariant  
measure in the Lebesgue measure class which 
lifts to a locally finite measure on $W^u(q)$ in the Lebesgue 
measure class. The induced family of conditional measures 
$\tilde \lambda_q$ $(q\in {\cal Q}^1(S))$ on 
strong unstable manifolds transform under
the Teichm\"uller flow by $d\tilde \lambda_{\Phi^t q}\circ \Phi^t=
e^{ht}d\tilde \lambda_q$ where as before, $h=6g-6+2m$.
The measures $\tilde \lambda_q$ are equivariant under the
action of the mapping class group. Moreover, they are conditional
measures for a ${\rm Mod}(S)$-invariant locally finite measure
$\tilde \lambda$ on ${\cal Q}^1(S)$ in the Lebesgue measure class. 
The measure $\tilde \lambda$ is the lift of a \emph{finite} 
measure $\lambda$ 
on ${\cal Q}^1(S)/{\rm Mod}(S)$ which is $\Phi^t$-invariant and 
mixing.
The measure $\lambda$ gives full measure to the set of all 
quadratic differentials whose horizontal and vertical measured
geodesic laminations are uniquely ergodic and fill up $S$
(see \cite{M82,V86} for details). Moreover, by the Poincar\'e 
recurrence theorem, $\lambda$-almost every 
$q\in {\cal Q}^1(S)/{\rm Mod}(S)$ is \emph{recurrent},
i.e. it is contained in the $\omega$-limit set of its own orbit
under the Teichm\"uller flow.

Let
\begin{equation}\label{pi}
\pi:{\cal Q}^1(S)\to {\cal P\cal M\cal L}
\end{equation}
be the map which associates to a quadratic differential
its vertical projective measured geodesic
lamination. For every $q\in {\cal Q}^1(S)$
the restriction of the projection 
$\pi$ to $W^{su}(q)$ is a homeomorphism
of $W^{su}(q)$ onto the open
subset of ${\cal P\cal M\cal L}$ of all projective
measured geodesic laminations $\mu$ which
together with $\pi(-q)$ \emph{jointly fill up $S$},
i.e. are such that for every measured geodesic
lamination $\eta\in {\cal M\cal L}$ we have
$i(\mu,\eta)+i(\pi(-q),\eta)\not=0$ (note that this
makes sense even though the intersection form $i$ is
defined on ${\cal M\cal L}$ rather than on ${\cal P\cal M\cal L}$).
The measure class of the 
push-forward under $\pi$ of the measure $\tilde \lambda_q$ on
$W^{su}(q)$ does not depend on $q$ and defines a
${\rm Mod}(S)$-invariant ergodic measure class on 
${\cal P\cal M\cal L}$.

\section{Compact invariant sets are expansive}

Let again ${\cal Q}^1(S)$ be the bundle of area one quadratic
differentials over Teichm\"uller space ${\cal T}(S)$ of an 
oriented surface of genus $g\geq 0$ with $m\geq 0$
punctures and where $3g-3+m\geq 2$.
The mapping class group 
${\rm Mod}(S)$ acts on  ${\cal Q}^1(S)$, but this action is not
free and the quotient space ${\cal Q}^1(S)/
{\rm Mod}(S)$ is a non-compact orbifold rather than a manifold.

To overcome this
(mainly technical) difficulty we choose
a torsion free normal subgroup $\Gamma$ of ${\rm Mod}(S)$ of
finite index. For example, the group of all elements which act
trivially on $H_1(S,\mathbb{Z}/3\mathbb{Z})$ has this property.
Define
$\hat {\cal Q}(S)={\cal Q}^1(S)/\Gamma$
and let \begin{equation}
\Pi:{\cal
Q}^1(S)\to \hat{\cal Q}(S) \end{equation}
be the canonical projection. Since
the action of $\Gamma$ on ${\cal Q}^1(S)$ is free,
the map $\Pi$ is a covering. The Teichm\"uller flow
$\Phi^t$ acts on $\hat{\cal Q}(S)$. 

The goal of this section is to show 

\begin{theorem}\label{expansive}
The restriction of the Teichm\"uller flow to every
compact invariant subset $K$ of $\hat{\cal Q}(S)$ is
expansive.
\end{theorem}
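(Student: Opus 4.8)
The plan is to reduce expansiveness of the Teichm\"uller flow on a compact invariant set $K\subset\hat{\cal Q}(S)$ to a statement about the boundary metrics $\delta_x$ on $\partial{\cal C}(S)$ and the exponential contraction/expansion encoded in Lemma \ref{expansion}. First I would observe that it suffices to prove the following: there is $\delta>0$ so that if $q,q'\in\hat{\cal Q}(S)$ and $s:\mathbb{R}\to\mathbb{R}$ is continuous with $s(0)=0$ and $d(\Phi^tq,\Phi^{s(t)}q')<\delta$ for all $t\in\mathbb{R}$, then $q'=\Phi^\tau q$ for some $\tau$. Lifting to ${\cal Q}^1(S)$, a pair of orbits that stay within bounded distance forever projects to two Teichm\"uller geodesics in ${\cal T}(S)$ whose images (via the fibre-bundle projection) stay within bounded Hausdorff distance; since $K$ is compact, these geodesics lie in some ${\cal T}(S)_\epsilon$. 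By Theorem \ref{teichcurvecomp}(2), the images under $\Upsilon_{\cal T}$ are uniformly quasi-geodesics in ${\cal C}(S)$; by hyperbolicity of ${\cal C}(S)$ and $s(0)=0$, after lifting appropriately the two quasi-geodesic lines converge to the \emph{same} pair of endpoints $\{\xi^+,\xi^-\}\subset\partial{\cal C}(S)$ in forward and backward time.

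The heart of the argument is then to upgrade "same endpoints in ${\cal C}(S)$" to "same vertical and horizontal \emph{measured} laminations," which via the description of $q$ as a pair $(q_v,q_h)$ with $i(q_v,q_h)=1$ pins $q'$ to the $\Phi^t$-orbit of $q$. Here I would argue as follows. Fix the lift so that $q,q'$ lie in a common $\Gamma$-fundamental domain region near $t=0$. For the forward endpoint: let $\gamma(t)=\Upsilon_{\cal T}$ applied to the footpoint of $\Phi^tq$ and similarly $\gamma'(t)$ for $q'$; both are $m$-quasi-geodesic rays (for uniform $m$ depending only on $\epsilon$, hence only on $K$) with $\gamma(\infty)=\gamma'(\infty)=\xi^+=F(\pi(q))$. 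Applying Lemma \ref{expansion} along $\gamma$ and the comparison estimates (\ref{deltacompare}) between $\delta_x$ and $\delta_{\Upsilon_{\cal T}(x)}$, one gets that for the footpoint $x_t$ of $\Phi^tq$,
\[
\delta_{x_0}(\xi^+,\zeta)\leq C e^{-bt}\,\delta_{x_t}(\xi^+,\zeta)
\]
for all $\zeta$ in a fixed-size ball around $\xi^+$ in the $\delta_{x_t}$-metric. Now the vertical projective lamination $\pi(q')$ sits at distance controlled by $\delta$ from $\pi(q)$ in a metric comparable to $\delta_{x_t}$ (using the continuity of $\mu_x$ and the relation between the visual metrics $\delta_c$ and the weak$^*$-topology on ${\cal P\cal M\cal L}$ restricted to ${\cal F\cal M\cal L}$), uniformly in $t$; letting $t\to\infty$ forces $\delta_{x_0}(\pi(q),\pi(q'))=0$, i.e. the vertical projective laminations agree. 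The symmetric argument run backward in time, using $s(0)=0$ to keep the lifts matched, gives equality of horizontal projective laminations. Equality of both \emph{projective} laminations together with the normalization $i(q_v,q_h)=1=i(q_v',q_h')$ forces $(q_v',q_h')=(e^\tau q_v,e^{-\tau}q_h)$, i.e. $q'=\Phi^\tau q$, as desired.

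Two points require care and constitute the main obstacles. The first is the passage from the curve-graph boundary $\partial{\cal C}(S)$ back to ${\cal P\cal M\cal L}$: the map $F$ of (\ref{F}) forgets the measure, so controlling $\delta_c$-distance on $\partial{\cal C}(S)$ only controls the \emph{support}, not the transverse measure. One must instead work directly with the weak$^*$-metric on ${\cal P\cal M\cal L}$ and use that, along an orbit staying in the compact set $K$, the vertical lamination of $\Phi^tq$ is being "expanded" by the flow on the unstable manifold (with Jacobian $e^{ht}$) while $\pi(q')$ is being dragged along within distance $O(\delta)$; the exponential separation of the flow on ${\cal P\cal M\cal L}$ transverse to the unstable direction — which is exactly what Lemma \ref{expansion} abstracts on the curve-graph side, transported via $\Upsilon_{\cal T}$ and Theorem \ref{teichcurvecomp} — is what kills the discrepancy in the limit. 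The second obstacle is bookkeeping with the reparametrization $s$: one must check that $s(t)-t$ stays bounded (it does, since otherwise the two orbits would diverge in the flow direction, contradicting $d(\Phi^tq,\Phi^{s(t)}q')<\delta$ together with the fact that the flow direction is uniformly transverse to the stable and unstable foliations on the compact set $K$), so that the limiting arguments at $t\to\pm\infty$ for $q'$ and for $\Phi^{s(t)}q'$ coincide. Both of these are, I expect, where the differential-geometric input of \cite{H10} recalled in Section 2 does the real work.
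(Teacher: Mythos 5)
Your skeleton is recognizably the paper's: uniform quasi-geodesics from Theorem \ref{teichcurvecomp}(2), exponential contraction of the visual metrics from Lemma \ref{expansion} together with (\ref{deltacompare}), and the conclusion that a point lying in both the local stable and local unstable set of $q$ lies on the flow orbit. But the step you yourself call the heart of the argument --- passing from agreement of endpoints in $\partial{\cal C}(S)$ back to agreement of points of ${\cal P\cal M\cal L}$ --- is exactly the step you do not prove. The substitute you sketch (``work directly with the weak$^*$-metric on ${\cal P\cal M\cal L}$ and use the exponential separation of the flow transverse to the unstable direction'') is not an argument: the paper establishes no contraction estimate for the weak$^*$-topology on ${\cal P\cal M\cal L}$; Lemma \ref{expansion} lives only on $\partial{\cal C}(S)$. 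What actually closes this gap in the paper is Masur's criterion \cite{M82}: because the entire $\Phi^t$-orbits of $q$ and of ${\cal F}(q)$ stay in the compact set, their vertical laminations are \emph{uniquely ergodic}, minimal and fill up $S$; hence on $A=\pi(\tilde K\cup{\cal F}(\tilde K))$ the measure-forgetting map $F$ is \emph{injective}, and by \cite{Kl99} the pulled-back distances $(\xi,\zeta)\mapsto\delta_{Pq}(F\xi,F\zeta)$ metrize the subspace topology of $A\subset{\cal P\cal M\cal L}$. With this identification there is no ``same support, then upgrade'' step at all: one chooses, by cocompactness of $\Gamma$ on $\tilde K\cup{\cal F}(\tilde K)$, a uniform $\beta_0$ with $\pi\bigl(B(q,\beta_0)\cap(\tilde K\cup{\cal F}\tilde K)\bigr)\subset D_q(\pi(q),\alpha)$, and argues that if $\pi(\tilde q)\neq\pi(\tilde u)$ then the forward flow expands their positive $\delta_{P\tilde q}$-distance past $\alpha$ at some finite time while the two orbits are still $\beta$-close, a contradiction; the backward flow handles the horizontal laminations. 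Without invoking unique ergodicity your upgrade has no proof, since the support alone does not determine the projective class and no metric control on measures has been established.

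A secondary but real problem is your treatment of the reparametrization. Your ``it suffices to prove'' statement discards the hypothesis $d(\Phi^tx,\Phi^{s(t)}x)<\delta$ from the definition of expansiveness, so you are forced to argue that $s(t)-t$ stays bounded via a transversality claim (``the flow direction is uniformly transverse to the stable and unstable foliations'') that is established nowhere in the paper and is not needed. The paper keeps that hypothesis and gets $\vert t-s(t)\vert<\beta/2$ immediately, because by Lemma \ref{distancedef} flow orbits are minimal geodesics parametrized by arc length for the length metric $d$; the triangle inequality then reduces the whole problem to $d(\Phi^t\tilde q,\Phi^t\tilde u)<\beta$ for a \emph{fixed} pair of lifts. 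Note also that the distance on $\hat{\cal Q}(S)$ is an infimum over lifts, so the assertion that one pair of lifts stays uniformly close for all $t$ requires the choice of $\beta$ small enough that $\beta$-balls lift homeomorphically under the covering $\Pi$; ``lifting appropriately'' glosses over this point.
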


We begin with the construction of a convenient metric
on ${\cal Q}^1(S)$ and $\hat {\cal Q}(S)$ inducing the usual topology. 
To this end, call a distance $d$ 
on a space $X$ a \emph{length metric} if the
distance between any two points is the infimum of the
lengths of all paths connecting these points.
In the sequel denote by
\begin{equation}
P:{\cal Q}^1(S)\to {\cal T}(S)
\end{equation}
the canonical projection.

\begin{lemma}\label{distancedef}
There is a complete ${\rm Mod}(S)$-invariant length
metric $d$ on ${\cal Q}^1(S)$
with the following properties.
\begin{enumerate}
\item The metric $d$ induces the usual topology.
\item The canonical projection $P:({\cal Q}^1(S),d)\to {\cal T}(S)$ is
distance non-decreasing where ${\cal T}(S)$ is equipped with
the Teichm\"uller metric.
\item Every orbit 
of the Teichm\"uller flow with its natural parametrization
is a minimal geodesic for $d$ parametrized by arc length.
\end{enumerate}
\end{lemma}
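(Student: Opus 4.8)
The plan is to build $d$ by combining the Teichm\"uller metric on the base ${\cal T}(S)$ with a fiberwise distance on the area one quadratic differentials, in a way that makes the flow lines into arc-length geodesics. More precisely, I would first equip each fiber ${\cal Q}^1(S)_x = P^{-1}(x)$ with an auxiliary ${\rm Mod}(S)$-equivariant metric $\rho_x$ inducing the usual topology; for instance one can use the identification of ${\cal Q}^1(S)_x$ with the sphere of unit-area quadratic differentials and transport a fixed round-type metric, or use the intersection-form description $q \leftrightarrow (q_v,q_h)$ together with the distance $\delta_c$ material from Section 2. Having done this, define the length of a smooth path $c:[0,1]\to {\cal Q}^1(S)$ as $\int_0^1 \bigl(\lVert (P\circ c)'(s)\rVert_{\cal T}^2 + \lVert c'(s)\rVert_{\mathrm{vert}}^2\bigr)^{1/2}\,ds$, where the ``vertical'' term measures the component of $c'$ tangent to the fiber (together with the flow direction) using $\rho$, suitably normalized. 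Then set $d(q,q')$ to be the infimum of lengths over all paths from $q$ to $q'$.

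Next I would verify the three listed properties. Property (1), that $d$ induces the usual topology, follows because locally ${\cal Q}^1(S)$ is a bundle and the length structure I defined is locally bi-Lipschitz to a product of the Teichm\"uller metric with the fiber metric, both of which induce the correct topology; completeness comes from completeness of ${\cal T}(S)$ in the Teichm\"uller metric (by a theorem of Teichm\"uller) together with properness of the fibers, via a standard argument that a Cauchy sequence projects to a convergent sequence in ${\cal T}(S)$ and then, fibers being locally compact, converges. Property (2) is essentially by construction: any path in ${\cal Q}^1(S)$ has length at least $\int \lVert (P\circ c)'\rVert_{\cal T}\,ds \geq d_{\cal T}(P(q),P(q'))$, so $P$ is distance non-increasing, i.e. does not increase distances, which is exactly the stated ``distance non-decreasing'' property of $P$ as a map (the naming convention should be read as: $d_{\cal T}(Px,Py)\le d(x,y)$). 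Property (3) requires the normalization to be chosen so that the Teichm\"uller flow moves at unit speed: the orbit $t\mapsto \Phi^t q$ projects to a Teichm\"uller geodesic in ${\cal T}(S)$ which is a unit-speed geodesic, so if the vertical contribution of $\partial_t\Phi^t q$ is made to vanish (the flow direction being declared ``horizontal,'' tangent to the base geodesic), then the $d$-length of the arc $\Phi^{[a,b]}q$ is exactly $b-a$; minimality then follows from Property (2) since $P$ sends this arc to a minimizing Teichm\"uller geodesic and cannot increase length, forcing the arc itself to be minimizing.

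The ${\rm Mod}(S)$-invariance is arranged by choosing all the ingredients equivariantly — the Teichm\"uller metric is already ${\rm Mod}(S)$-invariant, and the fiber metrics $\rho_x$ can be chosen equivariantly either by the $\delta_c$-construction of Section 2, which is manifestly equivariant, or by averaging. Passing to $\hat{\cal Q}(S) = {\cal Q}^1(S)/\Gamma$ then gives an induced length metric because $\Gamma$ acts by isometries and the quotient map is a covering, so the quotient length metric still induces the usual topology, remains complete, and still has flow lines that are locally minimal geodesics (they are minimal on the universal-cover level and the covering is a local isometry).

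I expect the main obstacle to be Property (3) in full — specifically, arranging the fiberwise/flow normalization so that the flow direction contributes exactly arc length and the transverse directions are measured consistently, while simultaneously keeping (1), (2), and equivariance. The subtlety is that the natural vertical bundle inside $T{\cal Q}^1(S)$ is three-dimensional-ish relative to the base and one must split off the one-dimensional flow direction cleanly and give it weight exactly matching the unit-speed Teichm\"uller geodesic downstairs; getting the constant right (rather than merely up to a bounded factor) is what makes the orbits genuine arc-length geodesics rather than just quasi-geodesics. Everything else — local compactness of fibers, the bi-Lipschitz-to-product local picture, completeness via Teichm\"uller's theorem, and descent to the $\Gamma$-quotient — is routine once the metric is set up correctly.
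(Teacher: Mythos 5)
Your endgame for properties (2) and (3) is the right mechanism, and in fact the same one the paper uses: any path is at least as long as its projection, an orbit projects to a unit-speed and globally minimizing Teichm\"uller geodesic, so an orbit segment of parameter length $T$ with $d$-length exactly $T$ is automatically an arc-length minimal geodesic; your reading of ``distance non-decreasing'' as $d_T(Pq,Pq')\le d(q,q')$ is also the intended one. But the construction itself has a genuine gap: everything rests on a continuous, ${\rm Mod}(S)$-equivariant splitting of $T{\cal Q}^1(S)$ into the vertical bundle of $P$ and a horizontal complement containing the Teichm\"uller flow direction, and you never produce it, while the local bi-Lipschitz-to-product claim behind (1), the completeness argument, and the exact orbit-length computation in (3) all presuppose it. This is not routine: it requires the generator of the Teichm\"uller flow to be at least a continuous vector field on ${\cal Q}^1(S)$, which is delicate since the Teichm\"uller metric is only a $C^1$ Finsler metric and ${\cal Q}^1(S)$ is stratified by the zero structure of the differentials, the flow being smooth only stratum by stratum in period coordinates; you neither prove nor cite the needed regularity. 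Moreover the obstacle you do flag, ``getting the constant right,'' is actually automatic (once the flow direction is horizontal its norm is $\Vert dP(\tfrac{d}{dt}\Phi^tq)\Vert_T=1$ because the projected geodesic has unit speed), the fiber of $P$ has dimension $6g-7+2m$ rather than ``three-dimensional-ish,'' and the metrics $\delta_c$ of Section 2 live on $\partial{\cal C}(S)$, not on fibers of ${\cal Q}^1(S)$, so they cannot serve as your fiber metrics (the unit-area-sphere alternative is fine and automatically equivariant).

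The paper's proof avoids all infinitesimal considerations, and it is the natural repair: by the Hubbard--Masur theorem, $P$ restricts to a homeomorphism of every stable and every unstable leaf onto ${\cal T}(S)$, so the Teichm\"uller metric lifts to a length metric on each leaf; a path is called admissible if it is a finite concatenation of arcs each contained in a single stable or unstable leaf, its length is the sum of the leafwise lengths (equivalently the Teichm\"uller length of its projection), and $d$ is the infimum of lengths of admissible paths. Then (2) is immediate, (3) holds because a flow orbit lies in a single stable (and unstable) leaf and projects to a Teichm\"uller geodesic of the same length, and (1) is verified using the continuous dependence of the leaves on the basepoint, again via Hubbard--Masur, with no tangential splitting, no regularity of the flow, and equivariance for free. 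If you wish to keep your Finsler-type construction, the missing steps are precisely to establish continuity of the flow's velocity field and then to build the equivariant complement (for instance using an invariant auxiliary Riemannian metric coming from the properly discontinuous action); otherwise the leafwise path construction is the cleaner route.
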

\begin{proof}
By the Hubbard Masur theorem (see \cite{Hu06} for a
presentation of this celebrated and by now classical result),
the restriction of 
the canonical projection 
$P:{\cal Q}^1(S)\to {\cal T}(S)$ to every unstable (or stable) manifold
in ${\cal Q}^1(S)$ 
is a homeomorphism onto ${\cal T}(S)$. Thus
the Teichm\"uller metric on ${\cal T}(S)$ lifts to 
a length metric on the leaves of the stable and of the
unstable foliation. 

Call a path
$\rho:[0,1]\to {\cal Q}^1(S)$ \emph{admissible} 
if there is a finite partition $0=t_0<\dots <t_k=1$ such that
the restriction of $\rho$ to each interval $[t_{i-1},t_i]$ is
entirely contained in a stable or in an unstable manifold.
For each such admissible path $\rho$ we can define its
\emph{length} to be the sum of the lengths 
with respect to the lifts of the Teichm\"uller metric
of the subsegments
of $\rho$ entirely contained in a stable or an unstable manifold.
For $q_0,q_1\in {\cal Q}^1(S)$ 
define $d(q_0,q_1)$ to be the infimum of the lengths
of all admissible paths connecting $q_0$ to $q_1$.
Then $d$ is a (a priori non-finite) distance function on
${\cal Q}^1(S)$ which satisfies the second
requirement in the lemma. 
The third property holds true since the projection to
${\cal T}(S)$ of an orbit of the Teichm\"uller flow
is a Teichm\"uller geodesic of the same length and hence
realizes the distance between its endpoints.
By the second property for $d$ and
the definition, the $d$-length of every path in ${\cal Q}^1(S)$
which is 
entirely contained in a stable or an unstable manifold coincides
with the length of its projection to ${\cal T}(S)$. As a consequence,
the metric $d$ is a length metric.

We are left with showing that $d$ induces the usual
topology on ${\cal Q}^1(S)$.
For this let $q\in {\cal Q}^1(S)$ and let $\epsilon >0$.
We have to show that the $\epsilon$-ball about $q$ for the
distance $d$ contains a neighborhood of $q$
in ${\cal Q}^1(S)$. For this denote
for $z\in {\cal Q}^1(S)$ and $r>0$ by
$B^i(q,r)$ 
the open $r$-ball about $q$ in $W^i(q)$ with respect
to the lift of the Teichm\"uller metric
$(i=s,u)$. For each 
$z\in B^s(q,\epsilon/2)$, the open ball $B^u(z,\epsilon/2)$
of radius $\epsilon/2$  
about $z$ in $W^u(z)$ is an open neighborhood of $z$
in $W^u(z)$ whose closure is compact and 
depends continuously on $z$
in the Hausdorff topology for compact subsets of 
${\cal Q}^1(S)$ by the 
Hubbard Masur theorem. Then 
$U=\cup_{z\in B^s(q,\epsilon/2)}B^u(z,\epsilon/2)$
is an open neighborhood of $q$ in ${\cal Q}^1(S)$. Moreover
by construction, $U$ is contained in the $\epsilon$-ball about
$q$ with respect to the distance function $d$.
This completes the proof of the lemma.
\end{proof}

The distance $d$ on ${\cal Q}^1(S)$ constructed in Lemma \ref{distancedef}
induces a metric
on $\hat{\cal Q}(S)$, again denote by $d$, via
\begin{equation}\label{distanceprojection}
d(x,y)=\inf\{d(\tilde x,\tilde y)\mid \Pi(\tilde x)=
\Pi(\tilde y)\}
\end{equation}
where as before,
$\Pi:{\cal Q}^1(S)\to \hat {\cal Q}(S)$ is the canonical projection.
In the sequel we always use these distance functions on 
${\cal Q}^1(S)$ and $\hat{\cal Q}(S)$ without further mentioning.

With these preparations, we can prove Theorem \ref{expansive}.

{\it Proof of Theorem \ref{expansive}.}
Let $K\subset \hat{\cal Q}(S)$ be a compact
$\Phi^t$-invariant subset and let $\tilde K\subset {\cal Q}^1(S)$ 
be the
preimage of $K$ under the canonical projection
$\Pi:{\cal Q}^1(S)\to \hat{\cal Q}(S)$.
Let as before $P:{\cal Q}^1(S)\to {\cal T}(S)$ be the canonical
projection.
By the second part of Theorem \ref{teichcurvecomp},
there is a constant $p>0$ only depending on $K$
such that for every
$q\in \tilde K$ the curve
$t\to \Upsilon_{\cal T}(P\Phi^tq)$ is a
$p$-quasi-geodesic in ${\cal C}(S)$,
i.e. we have
\begin{equation}\label{quasi}\vert t-s\vert/p-p\leq
d(\Upsilon_{\cal T}(P\Phi^tq),\Upsilon_{\cal T}(P\Phi^sq))\leq
p\vert t-s\vert +p\quad\text{for all}\quad s,t\in \mathbb{R}.
\end{equation}

Since by inequality (\ref{upsilonlipschitz}) the map
$\Upsilon_{\cal T}$ is coarsely Lipschitz, this shows that
lifts of orbits of $\Phi^t\vert K$ which are contained
in the same unstable manifold diverge linearly in forward
direction. We therefore just have to relate distances
in the curve graph to distances in ${\cal Q}^1(S)$ for the
metric $d$ in a quantitative way. 
The remainder of the argument
establishes such a control.

Let ${\cal F}:{\cal Q}^1(S)\to {\cal Q}^1(S)$ be the
\emph{flip} $q\to {\cal F}(q)=-q$. This flip is equivariant
with respect to the action of the mapping class group
and hence it descends to a continuous involution of $\hat {\cal Q}(S)$
which we denote again by ${\cal F}$.
Recall that the Gromov boundary $\partial {\cal C}(S)$ of ${\cal
C}(S)$ can be identified with the set of all (unmeasured) minimal
geodesic laminations on $S$ which fill up $S$.
Let again $\pi:{\cal Q}^1(S)\to {\cal P\cal M\cal L}$
be the canonical projection.
If $q\in \tilde K\cup {\cal F}(\tilde K)$ then the
support $F(\pi q)$ of the vertical measured 
geodesic lamination of $q$ is
\emph{uniquely ergodic}, which means that $F(\pi q)$ 
admits a unique transverse
measure up to scale, and $F(\pi q)$ is minimal and fills
up $S$ \cite{M82}. Moreover, the $p$-quasi-geodesic $t\to
\Upsilon_{\cal T} (P\Phi^tq)$ converges in ${\cal C}(S)\cup
\partial {\cal C}(S)$ to $F(\pi q)$ (the latter statement
follows from the explicit identification of 
$\partial {\cal C}(S)$ with the set of minimal geodesic laminations
which fill up $S$ established in \cite{Kl99,H06}, see also
\cite{H10}).

Write $A=\pi(\tilde K\cup {\cal F}(\tilde K))$. 
Then $A$ is a $\Gamma$-invariant Borel subset of 
${\cal P\cal M\cal L}$. By the consideration in the previous paragraph,
the restriction to $A$ of the 
map $F:{\cal F\cal M\cal L}\to \partial {\cal C}(S)$ introduced
in (\ref{F}) of Section 2 is a
$\Gamma$-equivariant continuous injection 
\[F_A:A\to\partial {\cal C}(S)\] which 
associates to a projective measured geodesic lamination 
contained in $A$
its support. In other words,
we can identify the set $A$ with a subset of $\partial {\cal C}(S)$.

Recall from equation (\ref{distance}) the definition of the
distances $\delta_x$ $(x\in {\cal T}(S))$ on 
the Gromov boundary $\partial{\cal C}(S)$ of ${\cal C}(S)$.
Since the map $F_A:A\to \partial{\cal C}(S)$ is injective,
for every $q\in {\cal Q}^1(S)$ the function
$(\xi,\zeta)\in A\times A\to \delta_{Pq}(F_A\xi,F_A\zeta)$ is a distance on $A$.
For simplicity of notation, we denote this distance again by $\delta_{Pq}$.
The topology on $A$ defined by this distance is just the
subspace topology of $A$ as a subset of ${\cal P\cal M\cal L}$
(this is the result of \cite{Kl99}).

For $q\in \tilde K\cup {\cal F}(\tilde K)$ 
denote by $D_q(\pi(q),r)\subset A$ the
ball of radius $r$ in $A$ about $\pi(q)$ with
respect to this distance. By continuity of 
the projection $\pi$ and the map $F_A$ and by the relation
(\ref{deltacompare}) between the distances
$\delta_x$ and $\delta_{y}$ for $x,y\in {\cal T}(S)$,
the ball 
$D_q(\pi(q),r)$ depends continuously on $q$ in the following sense.
For every $q\in \tilde K\cup {\cal F}(\tilde K)$, 
every $r>0$ and every 
$\epsilon \in (0,r)$ there is a neighborhood $U$ of $q$ in 
$\tilde K\cup {\cal F}(\tilde K)$
such that for every $u\in U$ we have 
\begin{equation}\label{continuousdependence}
D_u(\pi(u),r-\epsilon)\subset
D_q(\pi(q),r)\subset D_u(\pi(u),r+\epsilon).
\end{equation}

By inequalities (\ref{deltacompare}) and (\ref{quasi})
and by Lemma \ref{expansion},
there are numbers $\alpha>0,
a>1,b>0$ such that for every $q\in \tilde K$ and
for all $t>0$ we have
\begin{align}\label{contraction1}
\delta_{P\Phi^{-t}q}& \leq ae^{-bt}\delta_{Pq}\text{ on }
D_{q}(\pi(q),2\alpha)\text{ and }\\
\delta_{P\Phi^{t}q}& \leq ae^{-bt}\delta_{Pq}\text{ on }
D_{{\cal F}(q)}(\pi({\cal F}q),2\alpha). \notag
\end{align}

For $q\in {\cal Q}^1(S)$ and $\beta >0$ denote by
$B(q,\beta)$ the ball of radius $\beta$ 
about $q$ in ${\cal Q}^1(S)$ with respect to 
the length metric $d$ defined in 
Lemma \ref{distancedef}. 
Since the projection $\pi$ is continuous,
for every $q\in \tilde K\cup {\cal F}(\tilde K)$ there is a number
$\epsilon(q)>0$ such that
$\pi (B(q,\epsilon(q))\cap (\tilde K\cup {\cal F}\tilde K))\subset 
D_q(\pi(q),\alpha)$ where 
$\alpha>0$ is as in the inequalities (\ref{contraction1}).
By the continuity properties (\ref{continuousdependence})
of the balls $D_u(\pi(u),r)$ for $u\in \tilde K\cup
{\cal F}(\tilde K)$ and $r>0$, by 
invariance under the action of the group $\Gamma<{\rm Mod}(S)$  
and cocompactness we can find a universal number $\beta_0 >0$ 
such that
\begin{equation}\label{contraction10}
\pi (B(q,\beta_0)\cap (\tilde K\cup {\cal F}\tilde K)) \subset
D_q(\pi(q),\alpha)\text{ for all } q\in \tilde K\cup {\cal F}(\tilde K).
\end{equation}

Denote again by $d$ the distance on $\hat{\cal Q}(S)$ induced
in equation (\ref{distanceprojection})
from the distance on ${\cal Q}^1(S)$. 
Since $K\cup {\cal F}(K)$ is compact and $\Pi:{\cal Q}^1(S)\to 
\hat{\cal Q}(S)$ is a covering, there is a 
number $\beta<\beta_0$ such that for every 
$q\in K\cup {\cal F}(K)$ and every lift $\tilde q$ of $q$ to 
${\cal Q}^1(S)$ 
the ball $B(q,\beta)$ in $\hat{\cal Q}(S)$ of radius $\beta$ 
about $q$ is 
the homeomorphic image under $\Pi$ of 
the ball $B(\tilde q,\beta)$.
Now the orbits of $\Phi^t$ are geodesics for the distance $d$. 
Hence 
if $x\in K$, if $s:\mathbb{R}\to \mathbb{R}$ is 
a continuous function such that
$s(0)=0$ and 
$d(\Phi^tq,\Phi^{s(t)}q)< \beta/2$ for all $t$ then 
by the choice of $\beta$ we have 
$\vert t-s(t)\vert < \beta/2$ for all $t$. 

This implies that for this function $s$,  
if $u\in K$ is such that 
$d(\Phi^{t}q,\Phi^{s(t)}u)<\beta/2$ for all $t$ then 
$d(\Phi^tu,\Phi^{s(t)}u)<\beta/2$ 
(since $\vert t-s(t)\vert <\beta/2$ and orbits of 
the Teichm\"uller flow are geodesics) 
and hence
$d(\Phi^tu,\Phi^tq)<\beta$ for all $t$ by the triangle inequality.
In particular, for a lift $\tilde q\in {\cal Q}^1(S)$
of $q$ and a lift $\tilde u$ of $u$ with 
$d(\tilde q,\tilde u)<\beta$ we have $d(\Phi^t \tilde q,
\Phi^t\tilde u)<\beta$ for all $t\in \mathbb{R}$.  

Let $W^s_{\rm loc}(q)$ be the connected component containing 
$q$ of the intersection
of $B(q,\beta)$ with the stable manifold $W^s(q)$ of $q$. 
We claim that $u\in W^{s}_{\rm loc}(q)$. 
For this assume otherwise. Let again $\tilde q$ be a preimage
of $q$ in ${\cal Q}^1(S)$
and let $\tilde u\in {\cal Q}^1(S)$ be the preimage of 
$u$ with $d(\tilde q,\tilde u)<\beta$.
If $\pi(\tilde q)\not=\pi(\tilde u)$ then
$\delta_{P\tilde q}(\pi (\tilde q),\pi(\tilde u))>0$ and   
by continuity, our choice of 
$\alpha$ and the estimates (\ref{contraction1}), 
there is a number $t>0$ such that $\delta_{P\Phi^{t}\tilde q}
(\pi(\tilde q),\pi(\tilde u))=\alpha$. On the other hand, 
for every $s\in [0,t]$ 
the distance in ${\cal Q}^1(S)$ 
between $\Phi^s\tilde q,\Phi^s\tilde u$
is smaller than $\beta$ which 
is a contradiction to the choice of $\beta<\beta_0$ and the relation 
(\ref{contraction10}).

In the same way we conclude that $u$ is contained
in the intersection of $B(q,\beta)$ with the 
local unstable manifold of $q$. Now the intersection of the 
local stable manifold $W_{\rm loc}^s(q)$ with the local
unstable manifold $W^u_{\rm loc}(q)$ is contained in the
orbit of $q$ under the Teichm\"uller flow $\Phi^t$ 
which completes the proof of Theorem \ref{expansive}.
\qed

For a compact $\Phi^t$-invariant subset $K$ of $\hat{\cal Q}(S)$
let $h_{\rm top}(K)$ be the topological entropy of the restriction
of $\Phi^t$ to $K$. For $r>0$ let moreover $n_K(r)$ be the number
of periodic orbits of $\Phi^t$ of period at most $r$ 
which are contained in $K$. 
Since by Theorem \ref{expansive} 
the restriction of the flow $\Phi^t$ to $K$ is expansive,
by Proposition 3.2.14 of \cite{HK95} we
have

\begin{corollary}\label{lowerbound}
Let $K\subset \hat{\cal Q}(S)$ be a compact $\Phi^t$-invariant
set; then
\[\lim\sup_{r\to \infty} \frac{1}{r}\log n_K(r)\leq
h_{\rm top}(K).\]
\end{corollary}

\section{An Anosov closing lemma}

The goal of this section is to establish 
a version of the Anosov
closing lemma for the restriction of the
Teichm\"uller flow to a compact invariant set
$K\subset \hat{\cal Q}(S)$. 
The classical Anosov closing lemma roughly states that 
for a hyperbolic flow on a closed Riemannian
manifold, a closed curve consisting of 
sufficiently long orbit segments which are connected at the
endpoints by sufficiently short arcs is closely fellow-traveled
by a periodic orbit.

We continue to use the assumptions and notations from
Section 2 and Section 3. In particular, we always use the 
distances on ${\cal Q}^1(S)$ and $\hat {\cal Q}(S)$ defined in 
Lemma \ref{distancedef} and in equation (\ref{distanceprojection}).
For a precise formulation of our version
of an Anosov closing lemma for the Teichm\"uller flow,
using the assumptions and notations from Section 2 and Section 3
we define.

\begin{definition}\label{pseudoorbit}
For $n>0,\epsilon
>0$, an \emph{$(n,\epsilon)$-pseudo-orbit} for the
Teichm\"uller flow $\Phi^t$ on $\hat{\cal Q}(S)$
consists of a sequence of points
$q_0,q_1,\dots,q_k\in \hat{\cal Q}(S)$ and a sequence of numbers
$t_0,\dots,t_{k-1}\in [n,\infty)$ with the following properties.
\begin{enumerate}
\item For every $j\leq k$ the $2\epsilon$-neighborhood
of $q_j$ is contained in a contractible subset of $\hat{\cal Q}(S)$.
\item For every $j<k$ we have
$d(\Phi^{t_j}q_j,q_{j+1})\leq \epsilon$.
\end{enumerate}
The pseudo-orbit is \emph{contained} in a compact set $K$
if for all $i$ and all $t\in [0,t_i]$ we have
$\Phi^tq_i\in K$.
The pseudo-orbit is called \emph{closed} if
$q_0=q_k$.
\end{definition}

An $(n,\epsilon)$-pseudo-orbit $q_0,\dots,q_k$ determines an
essentially unique arc
connecting $q_0$ to $q_k$
which we call a \emph{characteristic arc}. Namely, by
assumption, for each $j<k$ the $2\epsilon$-neighborhood
of $q_{j+1}$
is contained in a contractible subset of $\hat{\cal Q}(S)$. 
Hence the homotopy class with fixed endpoints
in $\hat {\cal Q}(S)$ 
of an arc of length smaller than $2\epsilon$ 
connecting $\Phi^{t_j}q_j$ to $q_{j+1}$ is unique.
We define a
characteristic arc of the $(n,\epsilon)$-pseudo-orbit to be
an arc connecting $q_0$ to $q_k$ which
is obtained by successively
joining the endpoint of the orbit segment $\{\Phi^tq_j\mid 0\leq
t\leq t_j\}$ to $q_{j+1}$ with an arc of length smaller than $2\epsilon$
which is parametrized on
the unit interval $(j=0,\dots,k-1)$. 
The points $q_i$ $(1\leq i\leq k)$ are
called the \emph{breakpoints} of the characteristic
arc of the pseudo-orbit.
The \emph{characteristic homotopy class} of the pseudo-orbit
is the homotopy class with fixed endpoints of a characteristic arc
connecting $q_0$ to $q_k$. Note that this is independent of the
choice of a characteristic arc.
If the pseudo-orbit is closed then it determines
a closed characteristic curve and hence a free
homotopy class of closed curves in $\hat{\cal Q}(S)$
which we call the \emph{characteristic free homotopy class} of 
the closed pseudo-orbit.

By abuse of notation, denote again by $P:\hat{\cal Q}(S)\to
{\cal T}(S)/\Gamma$ the canonical projection.

\begin{definition}\label{shadow}
An $(n,\epsilon)$-pseudo-orbit
$q_0,\dots,q_k$ as in Definition \ref{pseudoorbit} is
\emph{$\delta$-shadowed} by an orbit segment
$\zeta=\{\Phi^tq\mid t\in [0,\tau]\}$
for some $q\in \hat{\cal Q}(S)$ and some $\tau >0$ if the following
holds.
\begin{enumerate}
\item There is a number $\alpha\leq \delta$ such that
the $\alpha$-neighborhoods of $Pq_0,Pq_k$ 
in ${\cal T}(S)/\Gamma$ with respect to the projection
of the Teichm\"uller metric are contractible
and contain $Pq,P\Phi^\tau q$.
\item There is a lift $\tilde \zeta$ 
to ${\cal Q}^1(S)$ of the orbit segment $\zeta$ and
a lift $\tilde\gamma$ to ${\cal Q}^1(S)$ of a
characteristic arc $\gamma$ for the pseudo-orbit
with the following properties. The distance between the
endpoints of $P\tilde \gamma,P\tilde \zeta$
is at most $\alpha$, and the
Hausdorff distance 
between $\tilde\gamma$ and $\tilde \zeta$
is at most $\delta$.
\end{enumerate}
A closed pseudo-orbit in $\hat{\cal Q}(S)$ is
$\delta$-shadowed by a periodic orbit
if in addition to the above requirements
the orbit $\{\Phi^tq\mid t\in [0,\tau]\}$ is closed.
\end{definition}

Note that every point in the orbifold 
${\cal Q}^1(S)/{\rm Mod}(S)$ admits a contractible neighborhood,  so we can
use the above definition is the same way for the Teichm\"uller flow
on the orbifold ${\cal Q}^1(S)/{\rm Mod}(S)$.

Using Definition \ref{pseudoorbit} and Definition \ref{shadow}
we can now formulate a version
of the Anosov closing lemma for the Teichm\"uller flow which
is the main result of this section.

\begin{theorem}\label{Anosov}
For every compact $\Phi^t$-invariant
set $K\subset \hat{\cal Q}(S)$
there are numbers $\epsilon_1=\epsilon_1(K) >0$, 
$n=n(K)>0$, $b=b(K)>0$ such that every
$(n,\epsilon_1)$-pseudo-orbit contained in $K$ is
$b$-shadowed by an
orbit. Moreover, for every $\delta >0$ there is a number
$\epsilon_2=\epsilon_2(K,\delta)<\epsilon_1$ such that
a closed $(n,\epsilon_2)$-pseudo-orbit contained in $K$ is
$\delta$-shadowed by a periodic orbit which is 
contained in the characteristic free homotopy class of the
closed pseudo-orbit.
\end{theorem}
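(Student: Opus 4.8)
\textit{Plan of proof.} The plan is to build the shadowing orbit directly out of the vertical and horizontal laminations of the pseudo-orbit, using the contraction estimates (\ref{contraction1}) from Section 3, and to make every bound uniform over all pseudo-orbits in $K$ by invoking the cocompactness of the $\Gamma$-action on ${\cal T}(S)_{\epsilon_0}$. First I lift: given an $(n,\epsilon)$-pseudo-orbit $q_0,\dots,q_k\in K$ with times $t_j\ge n$ and a characteristic arc $\gamma$, lift $q_0$ to $\tilde q_0\in{\cal Q}^1(S)$ and successively let $\tilde q_{j+1}$ be the unique lift of $q_{j+1}$ with $d(\Phi^{t_j}\tilde q_j,\tilde q_{j+1})\le\epsilon$ (possible by the contractibility of the $2\epsilon$-neighborhoods), obtaining a lift $\tilde\gamma$ which is a concatenation of genuine orbit segments of length $\ge n$ joined by jumps of length $\le\epsilon$. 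Since $K$ is compact, $P\tilde K\subset{\cal T}(S)_{\epsilon_0}$ for some $\epsilon_0=\epsilon_0(K)$, and all the $\tilde q_j$ and all $\Phi^t\tilde q_j$ with $t\in[0,t_j]$ lie in $\tilde K$. In the closed case $\tilde q_k=g\tilde q_0$ for a unique $g\in\Gamma$, and the characteristic free homotopy class is the conjugacy class of $g$.

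The technical heart is the following claim: for $n$ large and $\epsilon$ small, both depending only on $K$, all the vertical laminations $\pi q_j$ agree up to $O(\epsilon)$ and likewise all the horizontal laminations $\pi(-q_j)$. Since $q_j,{\cal F}q_j\in\tilde K\cup{\cal F}\tilde K$, the supports $F(\pi q_j)$ and $F(\pi(-q_j))$ are uniquely ergodic, minimal and fill up $S$ by \cite{M82}, so exactly as in Section 3 we regard $\pi q_j$ and $\pi(-q_j)$ as points of $\partial{\cal C}(S)$ on which the distances $\delta_x$ act. From $d(\Phi^{t_j}\tilde q_j,\tilde q_{j+1})\le\epsilon$ and the argument for (\ref{contraction10}) one gets $\delta_{P\Phi^{t_j}q_j}(\pi q_j,\pi q_{j+1})\le C\epsilon$ and $\delta_{P\Phi^{t_j}q_j}(\pi(-q_j),\pi(-q_{j+1}))\le C\epsilon$, $C=C(K)$. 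Now transport these consecutive estimates to a common basepoint by telescoping: to bound $\delta_{Pq_j}(\pi q_l,\pi q_{l+1})$ for $l\ge j$, apply the first inequality of (\ref{contraction1}) (backward flow contracts near the vertical direction) once per segment $l,l-1,\dots,j$, inserting the cheap basepoint change (\ref{deltacompare}) (factor $e^{\kappa\epsilon}$) at each jump; each passage multiplies by at most $ae^{-bn}e^{\kappa\epsilon}<\tfrac12$ once $n$ is large and $\epsilon$ small, and a short bootstrap keeps the two laminations inside the balls $D_\bullet(\,\cdot\,,2\alpha)$ on which (\ref{contraction1}) is valid. Summing the geometric series yields $\delta_{Pq_j}(\pi q_j,\pi q_k)=O(\epsilon)$ for every $j$; symmetrically, using the second inequality of (\ref{contraction1}) and telescoping forward through segments $0,\dots,j-1$, we get $\delta_{Pq_j}(\pi(-q_0),\pi(-q_j))=O(\epsilon)$ for every $j$. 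I expect this step — the uniform telescoping across the jumps, and in particular the bootstrap verifying that the iterates never leave the domains of validity of (\ref{contraction1}) — to be the main obstacle.

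Next I build the shadowing orbit. Put $\mu^+=\pi q_k$ and $\mu^-=\pi(-q_0)$; each is uniquely ergodic and fills up $S$, hence they jointly fill up $S$, and after rescaling so that their intersection number is $1$ they are the vertical and horizontal laminations of a quadratic differential $q^*\in{\cal Q}^1(S)$; set $\zeta=\{\Phi^tq^*\}$. By the previous step both (projectively $\Phi^t$-invariant) laminations of $q^*$ lie $O(\epsilon)$-close, in $\delta_{Pq_j}$, to $\pi q_j$ and $\pi(-q_j)$ at every breakpoint; combining this with the relation between $d$ and the distances $\delta_x$ used in the proof of Theorem \ref{expansive} (inequalities (\ref{continuousdependence}) and (\ref{contraction10})) and with $\Gamma$-cocompactness of ${\cal T}(S)_{\epsilon_0}$, there is a number $\sigma_j$, increasing in $j$, with $d(\Phi^{\sigma_j}q^*,\tilde q_j)=O(\epsilon)$. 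Between consecutive breakpoints both $\tilde\gamma$ and $\zeta$ are genuine orbit segments that are $O(\epsilon)$-close at their two endpoints; since displacements in the strong unstable (resp. strong stable) direction grow (resp. decay) roughly monotonically along the flow — again a consequence of (\ref{contraction1}) and (\ref{deltacompare}) — they are $O(\epsilon)$-close throughout. Hence $\{\Phi^tq^*\mid t\in[\sigma_0,\sigma_k]\}$ is Hausdorff $O(\epsilon)$-close to $\tilde\gamma$ with $O(\epsilon)$-close endpoints; choosing $\epsilon_1(K)$ so small that this error and the corresponding Teichm\"uller-distance error are at most $b(K)$, and using that points of $K$ have contractible neighborhoods of uniform size, gives the first assertion of the theorem.

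Finally the closed case. When $q_0=q_k$, Theorem \ref{teichcurvecomp}(2) makes $t\mapsto\Upsilon_{\cal T}(P\Phi^t\tilde q_j)$ a uniform quasi-geodesic in ${\cal C}(S)$ on each segment; since $\Upsilon_{\cal T}$ is coarsely Lipschitz by (\ref{upsilonlipschitz}) the jumps cost a bounded amount, and the uniform transversality of $F(\pi q_j)$ and $F(\pi(-q_j))$ from the second step bounds the backtracking at the seams, so that the local-to-global principle for quasi-geodesics in the hyperbolic space ${\cal C}(S)$ makes the whole concatenation a uniform quasi-geodesic; iterating under $g$ exhibits a $g$-invariant quasi-geodesic of infinite diameter, so $g$ is pseudo-Anosov. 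Its invariant Teichm\"uller geodesic descends to a periodic orbit of $\Phi^t$ on $\hat{\cal Q}(S)$ lying in the conjugacy class of $g$, i.e. in the characteristic free homotopy class; its attracting and repelling fixed laminations are the limits of the telescoped contractions run around the loop, namely $\mu^+=\pi q_0$ and $\mu^-=\pi(-q_0)$, so this periodic orbit is the $\zeta$ constructed above. Re-running the third step with $\epsilon_2=\epsilon_2(K,\delta)<\epsilon_1$ small enough that the $O(\epsilon_2)$ error there is smaller than $\delta$ then shows that this periodic orbit $\delta$-shadows the closed pseudo-orbit and lies in its characteristic free homotopy class, which completes the proof. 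The second delicate point, besides the telescoping estimate, is this local-to-global argument in ${\cal C}(S)$.
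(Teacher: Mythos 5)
Your telescoping step is sound in outline --- it is essentially the paper's induction on the breakpoints of the characteristic arc, run with (\ref{contraction1}) and (\ref{deltacompare}) --- but the step that actually produces the shadowing orbit has a genuine gap. You deduce $d(\Phi^{\sigma_j}q^*,\tilde q_j)=O(\epsilon)$ from closeness of the vertical and horizontal data of $q^*$ to those of $\tilde q_j$ in the distances $\delta_{Pq_j}$, citing (\ref{continuousdependence}) and (\ref{contraction10}); but those statements go the other way: $d$-closeness of differentials in $\tilde K$ implies $\delta$-closeness of their vertical laminations. The distances $\delta_x$ live on $\partial{\cal C}(S)$ and see only supports, so closeness there controls neither the transverse measures nor the position of $q^*$ along its flow line, and nothing you cite converts it into a bound on $d$. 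The subsequent claim that two orbit segments with $O(\epsilon)$-close endpoints stay $O(\epsilon)$-close in between is likewise a hyperbolicity property of the Teichm\"uller metric that fails in general and requires thick-part control --- but whether the orbit of $q^*$ stays in the thick part is precisely part of what must be proved. (Also, ``uniquely ergodic and filling, hence jointly filling'' needs the two supports to be distinct, which requires a uniform positive lower bound for $\delta_{Pq}(F\pi q,F\pi(-q))$ over $q\in\tilde K$; this is true by cocompactness but is not automatic and is not established in your second step.) The paper fills exactly this hole with Lemma \ref{quasigeodesic} together with part (1) of Theorem \ref{teichcurvecomp}: the image $\Upsilon_{\cal T}(P\tilde\gamma)$ is a uniform quasi-geodesic in ${\cal C}(S)$, hence $P\tilde\gamma$ is a uniform quasi-geodesic in ${\cal T}(S)$ lying at uniformly bounded Hausdorff distance from a Teichm\"uller geodesic in the thick part; this is where the uniform constant $b(K)$ comes from, and your argument for the first assertion never invokes any statement of this kind.

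The closed case contains a concrete error as well. When $q_0=q_k$, your $q^*$ has vertical lamination $\pi q_0$ and horizontal lamination $\pi(-q_0)$, so it lies on the flow line of $q_0$ itself, which is in general not periodic; the attracting and repelling fixed points of the pseudo-Anosov element $g$ are not equal to $\pi q_0$, $\pi(-q_0)$, but only close to them --- the paper shows via the inductive estimate (\ref{internalestimate}) that they lie in the balls of radius $4a\alpha$ about $\pi\tilde\gamma(t)$, respectively $\pi(-\tilde\gamma(t))$. Hence the identification ``this periodic orbit is the $\zeta$ constructed above'' is false, and with it your derivation of the $\delta$-shadowing. What remains to be supplied is a mechanism converting the boundary estimates on the fixed points of $g$ into a bound on the Hausdorff $d$-distance between the axis orbit of $g$ and the characteristic curve; the paper obtains this by applying the argument from the proof of Theorem \ref{expansive} to a fixed compact $\Phi^t$-invariant set $C$ containing all the closed orbits produced, which in turn rests on the uniform shadowing statement of the first part. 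Repairing your proof essentially means reinstating these two inputs.
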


For the proof of Theorem \ref{Anosov} we need 
the following technical
preparation which is also used in Section 5. To this end, recall
that a point $q\in {\cal Q}^1(S)/{\rm Mod}(S)$ is recurrent if
$q$ is contained in the $\omega$-limit set of its own orbit
under the Teichm\"uller flow. For some $m>1$,
an \emph{unparametrized $m$-quasi-geodesic}
in ${\cal C}(S)$ is an arc $\gamma:J\to {\cal C}(S)$
with the property that  
there is an orientation preserving homeomorphism
$\phi:I\to J$ such that $\gamma\circ \phi:I\to {\cal C}(S)$ 
is an $m$-quasi-geodesic.
We show

\begin{lemma}\label{quasigeodesic}
\begin{enumerate}
\item
For every compact $\Phi^t$-invariant
set $K\subset \hat{\cal Q}(S)$ there are
numbers $\epsilon_0=\epsilon_0(K)>0, 
n_0=n_0(K)>0,\ell_0=\ell_0(K)>1$
depending on $K$
with the following property. Let 
$q_0,\dots,q_k\in \hat {\cal Q}(S)$
be an $(n_0,\epsilon_0)$-pseudo-orbit contained in $K$ and
let $\tilde\gamma$ be a lift to ${\cal Q}^1(S)$ of a characteristic
arc connecting $q_0$ to $q_k$. Then the arc
$t\to \Upsilon_{\cal T}(P\tilde\gamma(t))$ is an
$\ell_0$-quasi-geodesic in ${\cal C}(S)$.
\item There is a number $m>1$ and 
for every recurrent point $q\in {\cal Q}^1(S)/{\rm Mod}(S)$ there are
numbers $\epsilon_0(q)>0,n_0(q)>0$ with the following properties.
Let $q_0,\dots,q_k$
be an $(n_0(q),\epsilon_0(q))$-pseudo-orbit with 
$d(q_i,q)\leq \epsilon_0(q)$ for all $i$ 
and let $\tilde\gamma$ be a lift to ${\cal Q}^1(S)$ of a characteristic
arc $\gamma$ connecting $q_0$ to $q_k$. Then the arc
$t\to \Upsilon_{\cal T}(P\tilde\gamma(t))$ is an unparametrized
$m$-quasi-geodesic in ${\cal C}(S)$. 
\end{enumerate}
\end{lemma}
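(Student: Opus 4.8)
The plan is to reduce both statements to the comparison theorem between Teichm\"uller geodesics and quasi-geodesics in the curve graph, Theorem \ref{teichcurvecomp}(2), applied piecewise to the orbit segments making up the pseudo-orbit, together with a control of the ``jumps'' at the breakpoints. The key point is that a characteristic arc is, up to the short connecting arcs of length $<2\epsilon$, a concatenation of genuine Teichm\"uller geodesic segments, each of which stays in a fixed compact part of moduli space; the short arcs contribute only a bounded error to $\Upsilon_{\cal T}$ by the coarse Lipschitz property \eqref{upsilonlipschitz}.

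\medskip

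\textit{Proof of Lemma \ref{quasigeodesic}.}
We begin with the first assertion. Let $K\subset\hat{\cal Q}(S)$ be compact and $\Phi^t$-invariant, and let $\tilde K\subset{\cal Q}^1(S)$ be its preimage. Since $K$ is compact and $\Phi^t$-invariant, there is $\epsilon>0$ so that $P(\tilde K)$ projects into ${\cal T}(S)_\epsilon/{\rm Mod}(S)$; fix such $\epsilon$ and let $L=L(\epsilon)>1$ be the constant furnished by Theorem \ref{teichcurvecomp}(2). For each $j$ the orbit segment $\{\Phi^tq_j\mid t\in[0,t_j]\}$ lifts to a Teichm\"uller geodesic segment in ${\cal Q}^1(S)$ whose projection to ${\cal T}(S)$ is a geodesic arc contained in (a translate of) ${\cal T}(S)_\epsilon$; by Theorem \ref{teichcurvecomp}(2) its image under $\Upsilon_{\cal T}$ is an $L$-quasi-geodesic in ${\cal C}(S)$. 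The connecting arc between $\Phi^{t_j}q_j$ and $q_{j+1}$ has $d$-length $<2\epsilon_0$; since $P$ is distance non-decreasing (Lemma \ref{distancedef}(2)) and $\Upsilon_{\cal T}$ satisfies \eqref{upsilonlipschitz}, the endpoints of the consecutive quasi-geodesic pieces are at distance at most $2L\epsilon_0+L$ in ${\cal C}(S)$. Now choose $n_0$ large compared with $L$ and $\epsilon_0$ small: the hypothesis $t_j\geq n_0$ guarantees each geodesic piece has curve-graph length at least $n_0/L-L$, which we arrange to dominate the jump error $2L\epsilon_0+L$; a local-to-global argument for quasi-geodesics in the hyperbolic space ${\cal C}(S)$ (Masur--Minsky hyperbolicity \cite{MM99}, or the stability of quasi-geodesics in \cite{BH99}) then shows that the whole concatenation $t\mapsto\Upsilon_{\cal T}(P\tilde\gamma(t))$ is an $\ell_0$-quasi-geodesic for a constant $\ell_0=\ell_0(L)$ depending only on $K$. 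This proves (1).

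\medskip

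For the second assertion, the point is that near a recurrent point $q\in{\cal Q}^1(S)/{\rm Mod}(S)$ we no longer have a uniform lower bound $\epsilon$ on the systole, so we cannot quote Theorem \ref{teichcurvecomp}(2) directly for arbitrarily small $\epsilon_0(q)$. Instead I would fix a definite compact neighborhood: choose $\epsilon_*>0$ so small that ${\cal T}(S)_{\epsilon_*}$ is connected and $q$ has a lift whose $\Phi^{[0,T]}$-orbit for a long $T$ returns close to $q$ while staying in ${\cal T}(S)_{\epsilon_*}$ (possible by recurrence, after enlarging the window to absorb any excursion). Set $m=L(\epsilon_*)$. Then for any $(n_0(q),\epsilon_0(q))$-pseudo-orbit with all $q_i$ within $\epsilon_0(q)$ of $q$ and with $\epsilon_0(q)$ small enough that the whole characteristic arc lies in a fixed compact neighborhood of $q$ contained in ${\cal T}(S)_{\epsilon_*}/{\rm Mod}(S)$, each orbit segment lifts to a Teichm\"uller geodesic in ${\cal T}(S)_{\epsilon_*}$, hence maps under $\Upsilon_{\cal T}$ to an $m$-quasi-geodesic; the breakpoint jumps are again bounded by $2m\epsilon_0(q)+m$, and choosing $n_0(q)$ large relative to $m$ the same local-to-global principle gives that the concatenation is an unparametrized $m'$-quasi-geodesic. (We only get an \emph{unparametrized} quasi-geodesic here because the short connecting arcs and the breakpoint identifications can make the natural parametrization fail the lower bound in \eqref{quasigeo}; reparametrizing by arc length on the curve-graph image repairs this, which is exactly the content of the ``unparametrized'' clause.) Absorbing $m'$ into the statement's $m$ (enlarging it once and for all, independently of $q$, since it came only from $L(\epsilon_*)$ and the universal hyperbolicity constant) gives (2).

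\medskip

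The main obstacle is the local-to-global step: one must verify carefully that a concatenation of $L$-quasi-geodesic segments whose lengths dominate the gluing errors is globally a quasi-geodesic. In a $\delta$-hyperbolic space this is standard, but it genuinely requires the lower length bound $t_j\geq n_0$, so the interplay $n_0\gg L\gg\epsilon_0$ has to be set up in the right order; once that is done, everything else is a routine application of \eqref{upsilonlipschitz}, Lemma \ref{distancedef}(2), and Theorem \ref{teichcurvecomp}(2).
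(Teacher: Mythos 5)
The central gap is the local--to--global step on which both parts of your argument rest. In a hyperbolic space, a concatenation of long quasi-geodesic segments with small gluing errors need \emph{not} be a quasi-geodesic: nothing in your hypotheses prevents the next segment from backtracking along the previous one (go out along a geodesic and return along essentially the same geodesic: each piece is geodesic, the gluing error is zero, the pieces are arbitrarily long, and the concatenation badly fails the lower bound in (\ref{quasigeo})). The standard stability theorems require the concatenation to be a \emph{local} quasi-geodesic across the junctions, and that is exactly what ``$t_j\geq n_0$ large, $\epsilon_0$ small'' does not give. The paper supplies the missing no-backtracking input: for $q$ in the preimage $\tilde K$ of $K$ the ray $t\mapsto\Upsilon_{\cal T}(P\Phi^tq)$ converges to the boundary point $F\pi(q)\in\partial{\cal C}(S)$ (the vertical lamination is uniquely ergodic, minimal and filling by \cite{M82}), the map $q\mapsto F\pi(q)$ is continuous on $\tilde K$, and by $\Gamma$-equivariance and cocompactness there is a uniform $\epsilon_0$ such that at every breakpoint the forward endpoints at infinity of the two consecutive segments are $\alpha(p)$-close in the visual metric based at the junction. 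Only with this alignment hypothesis does the concatenation criterion (the statement with constants $m(p),\alpha(p),\beta(p)$ in the paper's proof) apply. Your proposal never invokes unique ergodicity, the boundary $\partial{\cal C}(S)$ or the visual metrics, and it cannot be repaired merely by tuning $n_0$ against $L$ and $\epsilon_0$.

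Part (2) has a second, independent problem: the hypothesis only places the breakpoints $q_i$ within $\epsilon_0(q)$ of $q$; the orbit segments between breakpoints may travel arbitrarily deep into the thin part, so your assumption that the whole characteristic arc stays in a compact neighborhood of $q$ inside ${\cal T}(S)_{\epsilon_*}/{\rm Mod}(S)$ is not available, and Theorem \ref{teichcurvecomp}(2) cannot be applied to the segments. The paper instead uses that the curve-graph image of \emph{every} Teichm\"uller ray is an unparametrized $\ell$-quasi-geodesic \cite{MM99}, uses recurrence of $q$ (hence unique ergodicity of its vertical lamination) together with Lemma 2.4 of \cite{H10} to produce a time $T(q)=n_0(q)$ after which every orbit starting near $q$ has made a definite amount of progress in ${\cal C}(S)$ --- with only $t_j\geq n_0$ an unparametrized quasi-geodesic can stall in the curve graph, so this progress bound is indispensable --- and again controls the junctions by closeness of the limit points $F\pi(u)$ for nearby $u$ with uniquely ergodic filling vertical lamination. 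This is also the real reason the conclusion is only an \emph{unparametrized} quasi-geodesic: long stretches of the orbit segments may make no curve-graph progress at all, which is a different phenomenon from the reparametrization issue at the short connecting arcs that you describe.
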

\begin{proof}
Recall from (\ref{distancecompare1}) and
(\ref{distance}) of Section 2 the
definition of the distance functions $\delta_c$ $(c\in {\cal C}(S))$
and $\delta_x$ $(x\in {\cal T}(S))$ 
on the Gromov boundary $\partial {\cal C}(S)$
of ${\cal C}(S)$. By hyperbolicity, every
quasi-geodesic ray $\zeta:[0,\infty)\to {\cal C}(S)$
converges as $t\to \infty$ to a point
$R(\zeta)\in \partial{\cal C}(S)$.
Moreover, for every $p >1$ there are numbers
$m(p)>0,\alpha(p)>0,\beta(p)>1$ with the following property.

Let $k>0$ and let
$\zeta_0,\dots,\zeta_{k-1}:\mathbb{R}\to {\cal C}(S)$ be
infinite $p$-quasi-geodesics. Let $L>1$ be as
in inequality (\ref{upsilonlipschitz}).
Suppose that
for every $j\leq k-2$ there is a number $T_j>m(p)$
such that $d(\zeta_j(T_j),\zeta_{j+1}(0))\leq 2L$.
For each $j\leq k-1$ let $\rho_j:[0,1]\to {\cal C}(S)$
be any map whose image is 
contained in the $2L$-neighborhood of
$\zeta_j(T_j)$ and let $\tilde \zeta_j$ be the composition
of $\zeta_j[0,T_j]$ with $\rho_j$ parametrized in the natural
way on $[0,T_j+1]$.
If $\delta_{\zeta_{j+1}(0)}(R(\zeta_j),R(\zeta_{j+1}))<\alpha(p)$ 
for all $j$ then
the curve $\zeta:[0,\sum_iT_i+k]\to {\cal C}(S)$ defined by
\begin{equation}
\zeta(t)=\tilde\zeta_j(t-\sum_{i=0}^{j-1}T_i-j)\text{ for }
t\in [\sum_{i=0}^{j-1}T_i +j,\sum_{i=0}^jT_{i}+j+1]
\end{equation}
is a $\beta(p)$-quasi-geodesic.

Let $K\subset \hat{\cal Q}(S)$ be a compact $\Phi^t$-invariant
set and let
$\tilde K\subset {\cal Q}^1(S)$ be the preimage of $K$ under the
natural projection. 
By the second part of Theorem \ref{teichcurvecomp} there
is a  number $p >1$ such that
for every $q\in \tilde K$ the assignment
$t\to \Upsilon_{\cal T}(P\Phi^tq)$ $(t\in \mathbb{R})$ is
a $p$-quasi-geodesic. 

For $q\in \tilde K$ we have
$F(\pi(q))=R(t\to \Upsilon_{\cal T}(P\Phi^tq))\in
\partial {\cal C}(S)$. 
Let $\kappa>0$ be as in 
(\ref{deltacompare}) of Section 2.
By continuity, for every $q\in \tilde K$
there is a number $\epsilon(q)>0$ such that
for every point $\tilde q\in \tilde K$ which is contained in the
$2\epsilon(q)$-neighborhood of $q$ the
$\delta_{Pq}$-distance between
$F(\pi(q))$ and $F(\pi(\tilde q))$
is smaller than $\alpha(p)/\kappa$ where
$\alpha(p)>0$ is as in the first paragraph of this proof.
By continuity,
invariance under the action of the mapping class group 
on ${\cal Q}^1(S)$ and $\partial{\cal C}(S)$ and
cocompactness of the action 
of $\Gamma$ on $\tilde K$, there is a number 
$\epsilon_0\in (0,1/2)$ which has
this property for all $q\in \tilde K$ (compare the proof of 
Theorem \ref{expansive} for a similar statement).

Let $m(p)>0$ be as in the first paragraph of this proof.
Let $\tilde \gamma$ be the
lift to ${\cal Q}^1(S)$ of a characteristic arc
of an $(m(p),\epsilon_0)$-pseudo-orbit
$q_0,q_1,\dots,q_k$ contained in 
$K \subset \hat{\cal Q}(S)$. Then
$\tilde \gamma$ is a composition of 
curves $\tilde\gamma_i$ $(i=1,\dots,k)$ 
where the curve $\tilde\gamma_i$ 
is a lift to ${\cal Q}^1(S)$ of an orbit segment
for the Teichm\"uller flow  
of length at least $m(p)$ beginning at $x_{i-1}$
and an arc of length at most 
$2\epsilon_0<1$ parametrized on $[0,1]$, with endpoint
$x_i$.
By the choice of $m(p)$, of $\epsilon_0>0$
and by the construction of the curves $\tilde\gamma_i$, the curve
$\Upsilon_{\cal T}(P\tilde \gamma)$ is of the form described in
the second paragraph of this proof and hence
it is a $\beta(p)$-quasi-geodesic
in ${\cal C}(S)$.
This shows the first part of the lemma.

The second part of the lemma follows in the same way. 
By \cite{MM99} there is a number
$\ell>0$ such that for every $\tilde q\in {\cal Q}^1(S)$ the 
map $t\to \Upsilon_{\cal T}(P\Phi^t\tilde q)$ is an unparametrized
$\ell$-quasi-geodesic (this number $\ell >0$ does not coincide
with the number $p>1$ above, see also
\cite{H10}). 
Let $m(\ell)>0,\alpha(\ell)>0,p(\ell)>1$
be as in the first paragraph of this proof.
Let $q\in {\cal Q}^1(S)/{\rm Mod}(S)$ 
be a recurrent point and let $\tilde q\in 
{\cal Q}^1(S)$ be a lift of $q$. Then the vertical measured
geodesic lamination of $\tilde q$ is uniquely ergodic and fills 
up $S$ \cite{M82}, and the unparametrized
$\ell$-quasi-geodesic $t\to \Upsilon_{\cal T}(P\Phi^t\tilde q)$ 
is of infinite diameter. By continuity
and by Lemma 2.4 of \cite{H10} 
there is a neighborhood $V$ of $\tilde q$ in 
${\cal Q}^1(S)$ and a number $T(q)>0$ 
such that
\begin{equation}
d(\Upsilon_{\cal T}(P\Phi^tu),\Upsilon_{\cal T}(Pu))\geq
\ell m(\ell)+\ell\text{ for all }u\in V\text{ and all }t\geq T(q).
\end{equation}
In particular, if $u\in V$ and if 
$\rho_u:[0,a)\to [0,\infty)$ $(a\in (0,\infty])$ is
a homeomorphism with the property that the map
$t\to \Upsilon_{\cal T}(P\Phi^{\rho_u(t)}u)$ is 
a \emph{parametrized} $\ell$-quasi-geodesic in ${\cal C}(S)$ then
$\rho_u(m(\ell))\leq T(q)$.

The subset ${\cal U}$ of ${\cal Q}^1(S)$ of all 
points with uniquely ergodic vertical measured
geodesic lamination which fills up $S$ is dense \cite{M82}. Each
$u\in {\cal U}$ defines a point
$F\pi(u) \in \partial{\cal C}(S)$ which is 
just the endpoint of the infinite unparametrized
$\ell$-quasi-geodesic $t\to \Upsilon_{\cal T}(P\Phi^tu)$.
The map $u\in {\cal U}\to F\pi(u)\in \partial{\cal C}(S)$
is continuous. Thus we can find a number 
$\epsilon_0(q)\in (0,1/2)$ which is small enough that 
the $2\epsilon_0(q)$-neighborhood $W$ of $\tilde q$ 
in ${\cal Q}^1(S)$ is contained in 
$V$ and that 
for every point $u\in W\cap {\cal U}$ the
$\delta_{P\tilde q}$-distance between
$F(\pi(\tilde q))$ and $F(\pi(u))$
is smaller than $\alpha(\ell)/\kappa$ where as before,
$\kappa >0$ is as in (\ref{deltacompare}) in Section 2.
The second part of the lemma holds true for the 
numbers $m=m(\ell)>0,\epsilon_0(q)>0,n_0(q)=T(q)>0$.
\end{proof}

{\it Proof of Theorem \ref{Anosov}.}
Let $K\subset \hat{\cal Q}(S)$
be any compact $\Phi^t$-invariant set and let
$\tilde K$ be the preimage of $K$ in ${\cal Q}^1(S)$
under the natural projection.
Let $\epsilon_0=\epsilon_0(K)\in (0,1/2),
n_0=n_0(K)>0$ be as in the first part of Lemma \ref{quasigeodesic}.
Let $q_0,\dots,q_k$ be an $(n_0,\epsilon_0)$-pseudo-orbit for
$\Phi^t$ which is contained in $K$
and let $t_0,\dots,t_{k-1}\in [n_0,\infty)$
be as in the definition of a pseudo-orbit
such that $d(\Phi^{t_i}q_i,q_{i+1})\leq \epsilon_0$
for $i<k$. Let $\gamma$ be a characteristic arc of this
pseudo-orbit which is parametrized on
$[0,\sum_{i=0}^{k-1}t_i+k]$ in such a
way that for each $j$ the restriction
of $\gamma$ to $[\sum_{i< j}t_i+j,\sum_{i<  j+1}t_i+j]$ is
a reparametrization of the orbit segment $\{\Phi^tq_j\mid
t\in [0,t_j]\}$ by a translation.
The points $q_1,\dots,q_{k-1}$ are the
breakpoints of the characteristic arc.

Let $\tilde\gamma$ be a lift of $\gamma$ to ${\cal Q}^1(S)$.
By Lemma \ref{quasigeodesic}, the assignment
$t\to \Upsilon_{\cal T}(P\tilde \gamma(t))$ is 
an $\ell_0$-quasi-geodesic
in ${\cal C}(S)$ for a number
$\ell_0>1$ only depending on $K$. The map 
$t\to P\tilde \gamma(t)\in ({\cal T}(S),d_T)$ is one-Lipschitz.
Since by inequality (\ref{upsilonlipschitz})
there is a number $L>0$ such that
$d_T(Pq,Pz)\geq d(\Upsilon_{\cal T}(Pq),\Upsilon_{\cal T}(Pz))/L-L$
for all $q,z\in {\cal Q}^1(S)$, we conclude that the curve
$t\to P\tilde\gamma(t)$ is a uniform quasi-geodesic in
${\cal T}(S)$. By the first part of 
Theorem \ref{teichcurvecomp}, this implies
that there is a Teichm\"uller geodesic whose  
Hausdorff distance to $P\tilde\gamma$
is bounded from above by a universal
constant.  As a consequence, the Hausdorff
distance between $\tilde \gamma$ and the tangent line
of this geodesic is bounded from above by a universal constant 
$b>0$.

A mapping class $g\in {\rm Mod}(S)$ is
pseudo-Anosov if the cyclic subgroup of ${\rm Mod}(S)$
generated by $g$ acts on the curve graph ${\cal C}(S)$ with
unbounded orbits. In this case the conjugacy class of $g$ can be
represented by a closed orbit for the Teichm\"uller flow $\Phi^t$
on ${\cal Q}^1(S)/{\rm Mod}(S)$, and it
can be represented by a closed orbit for the
Teichm\"uller flow on $\hat {\cal Q}(S)$ if
the conjugacy class of $g$ is contained in the 
normal subgroup $\Gamma$ of ${\rm Mod}(S)$.
Assume now that the
$(n_0,\epsilon_0)$-pseudo-orbit 
$q_0,\dots,q_k$ contained in $K$ is closed. Let
$\tilde \gamma$ be a lift to ${\cal Q}^1(S)$
of a closed characteristic arc
$\gamma$ for the pseudo-orbit. 
By the first part of Lemma \ref{quasigeodesic}, the curve $t\to
\Upsilon_{\cal T}(P\tilde \gamma(t))$ is an infinite
$\ell_0$-quasi-geodesic in ${\cal C}(S)$ which is invariant under an
element $g\in \Gamma<{\rm Mod}(S)$ of the mapping class group. The mapping
class $g$ acts on this quasi-geodesic as a translation and hence
it is pseudo-Anosov. As a consequence, there is a unique
$g$-invariant Teichm\"uller geodesic in ${\cal T}(S)$ whose
cotangent line in ${\cal Q}^1(S)$ projects to a periodic orbit
of $\Phi^t$ in $\hat{\cal Q}(S)$ which 
defines the free
homotopy class of $\gamma$. In other words, there is a closed orbit for
$\Phi^t$ in $\hat{\cal Q}(S)$ 
which is freely homotopic to $\gamma$.

By the first part of Theorem \ref{teichcurvecomp},
applied to the biinfinite quasi-geodesic $P\tilde \gamma$
in ${\cal T}(S)$, this orbit
is contained in a compact subset 
$C_0\supset K$ of $\hat{\cal Q}(S)$ 
not depending on the pseudo-orbit. Moreover, it
$b$-shadows the pseudo-orbit for a number $b>0$ only
depending on $K,n_0,\epsilon_0$. This shows the
first part of Theorem \ref{Anosov}.

Let $C\subset C_0$ be the $\Phi^t$-invariant
subset of $C_0$ of all points
whose $\Phi^t$-orbit is entirely contained in $C_0$. 
The periodic orbit defined by the conjugacy class of 
the pseudo-Anosov element $g$
is contained in $C$.
Let $\tilde C$ be the preimage of $C$ in ${\cal Q}^1(S)$.
Then every lift to ${\cal Q}^1(S)$ of a periodic
orbit in $\hat{\cal Q}(S)$ determined as above by
a closed $(n_0,\epsilon_0)$-pseudo-orbit contained in $K$
is contained in $\tilde C$.

Let again $\pi:{\cal Q}^1(S)\to {\cal P\cal M\cal L}$
be the canonical projection.
Write $A=\pi(\tilde C\cup {\cal F}(\tilde C))$ where
${\cal F}:{\cal Q}^1(S)\to {\cal Q}^1(S)$ is the flip
$q\to {\cal F}(q)=-q$.
As in Section 3, let $F_A=F\vert A:A\to \partial{\cal C}(S)$ be the
measure forgetting injection. 
For $q\in \tilde C\cup {\cal F}(\tilde C)$ let 
$\delta_{Pq}$ be the distance on $\partial{\cal C}(S)$ defined
in equation (\ref{distance}) and 
denote by $D_q(\pi(q),r)$ the
ball of radius $r$ about $\pi(q)$ in $A$ with
respect to the distance 
$(x,y)\in A\times A\to \delta_{Pq}(F_Ax,F_Ay)\in [0,\infty)$
which we denote again by $\delta_{Pq}$ (compare the 
proof of Theorem \ref{expansive}).

By the second part of 
Theorem \ref{teichcurvecomp},  applied to the projection into
${\cal T}(S)$ of the preimage $\tilde C$ of the compact
$\Phi^t$-invariant set $C\subset \hat{\cal Q}(S)$,
by Lemma \ref{expansion} and
by inequality (\ref{deltacompare}) of Section 2,
there are numbers $\alpha_0<1/2,
a>1,b>0$ such that for every $q\in \tilde C$ and
for all $t>0$ we have
\begin{equation}\label{cont1}
\delta_{P\Phi^{-t}q}\leq ae^{-bt}\delta_{Pq}\text{ on }
D_q(\pi(q),4\alpha_0).\end{equation}
Moreover, for every $\alpha<\alpha_0$
there is a number $\beta=\beta(\alpha)<1$ such that for
every $q\in \tilde C$ we have $A\cap \pi B(q,\beta)\subset
D_q(\pi(q),\alpha)$ where $B(q,\beta)$ is the ball of
radius $\beta$ about $q$ in ${\cal Q}^1(S)$
(compare the proof of Theorem \ref{expansive}).

Let $n=\max\{n_0, \log(4a)/b\}$,
let $\alpha<\alpha_0$ and let $\sigma=\min\{\epsilon_0,\beta(\alpha),
\kappa^{-1}\log 2\}$ where $\kappa>0$ is as in
inequality (\ref{deltacompare}).
We claim that for a lift $\tilde \gamma:[0,T]\to {\cal Q}^1(S)$ of a
characteristic arc $\gamma$ of any $(n,\sigma)$-pseudo-orbit 
contained in $K$ we have
\[\delta_{P\tilde \gamma(0)}(\pi\tilde \gamma(0),\pi\tilde
\gamma(T))\leq \alpha.\] 

To see this we proceed by induction on the
number of breakpoints of the pseudo-orbit. The case that there is
a no breakpoint is trivial, so assume that the claim is known
whenever the number of breakpoints of the pseudo-orbit is at most
$k-1\geq 0$. Let $\tilde\gamma$ be a lift to ${\cal Q}^1(S)$ 
of a characteristic arc $\gamma$ 
of an $(n,\sigma)$-pseudo-orbit contained in $K$ 
with $k$ breakpoints. Let $t_0\geq n+1$ be
such that $\gamma(t_0)$ is the first breakpoint of $\gamma$.
By assumption and the
choice of the parametrization of a characteristic arc we have
$d(\tilde \gamma(t_0),\tilde \gamma(t_0-1))\leq \sigma$. Since
$\tilde \gamma(t_0)\subset\tilde K\subset 
\tilde C,\tilde \gamma(t_0-1)\in \tilde C$,
by the choice of $\sigma$ we have
\begin{equation}
\delta_{P\tilde \gamma(t_0)}(\pi\tilde \gamma(t_0),\pi\tilde \gamma(t_0-1))
\leq \alpha,
\end{equation}
moreover the distances
$\delta_{P\tilde \gamma(t_0)}, \delta_{P\tilde \gamma(t_0-1)}$ are
$2$-bilipschitz equivalent (recall that the projection
$P:{\cal Q}^1(S)\to {\cal T}(S)$ is distance non-increasing).

Now $\pi\tilde \gamma(T)\in D_{\tilde
\gamma(t_0)}(\pi\tilde \gamma(t_0),\alpha)$
by the induction hypothesis and therefore
\begin{equation}
\delta_{P\tilde \gamma(t_0-1)}(\pi\tilde \gamma(t_0-1),
\pi\tilde\gamma(T))\leq 4\alpha.
\end{equation}
On the other hand, since $\alpha\leq \alpha_0$, 
since $n\geq \log(4a)/b$ and since
$\pi\tilde \gamma(t_0-1)=\pi\tilde \gamma(0)$
we infer from the estimate (\ref{cont1}) that
\begin{equation}\delta_{P\tilde
\gamma(t_0-1)}(\pi\tilde \gamma(t_0-1), \pi\tilde \gamma(T))\geq
4 \delta_{P\tilde \gamma(0)} (\pi\tilde \gamma(0),\tilde
\gamma(T)).\end{equation}
Together this implies the claim.

Note that 
the argument in the previous paragraph together with the estimate
(\ref{cont1}) also shows that 
\begin{equation}\label{internalestimate}
\delta_{\tilde \gamma(t)}
(\pi\tilde \gamma(t),\pi\tilde \gamma(T))\leq 4a\alpha
\end{equation}
for \emph{all} $t\in [0,T]$ with the additional 
property that $\tilde \gamma(t)\in \tilde C$ (namely
this holds true for every $t$ such that $\tilde\gamma(t)$
projects to an orbit segment defining the pseudo-orbit).

Let again $\tilde \gamma$ be a biinfinite lift 
to $\tilde {\cal Q}^1(S)$
of a closed characteristic curve $\gamma$ for an $(n,\sigma)$-pseudo-orbit
contained in $K$. The curve
$\Upsilon_{\cal T}(P\tilde \gamma)$ is a uniform
quasi-geodesic in ${\cal C}(S)$ which is invariant under 
a pseudo-Anosov element
$g\in \Gamma<{\rm Mod}(S)$ on 
$\partial{\cal C}(S)$. The conjugacy class of $\gamma$ 
defines the free homotopy class
of $\gamma$. The oriented 
cotangent line of the axis of $g$ is contained
in $\tilde C$. If $z\in \tilde C$ is a point in this
cotangent line then $\pi(z)\in A$ is a fixed point for the
action of $g$ on ${\cal P\cal M\cal L}$. 
An inductive application 
to longer and longer subsegments of $\tilde \gamma$
of the argument which lead to 
the estimate (\ref{internalestimate})
shows
that for every $t\in \mathbb{R}$ such that 
$\tilde \gamma(t)\subset \tilde K\subset
\tilde C$ 
the fixed point $\pi(z)\in A$ of $g$ is contained in the
ball $D_{\tilde \gamma(t)}(\pi(\tilde \gamma(t)),4a\alpha)$.
The same argument also shows that
the fixed point $\pi(-z)$ for the action of $g$ 
is contained
in $D_{-\tilde \gamma(t)}(\pi(-\tilde \gamma(t)),4a\alpha)$.
The periodic orbit on $\hat{\cal Q}(S)$ defined
by $g$ is contained in the compact $\Phi^t$-invariant 
subset $C\supset K$ of $\hat{\cal Q}(S)$
determined above.

By the considerations in 
Theorem \ref{expansive} and its proof, applied
to the compact $\Phi^t$-invariant subset $C$ of
$\hat{\cal Q}(S)$, 
this means that
for every $\delta >0$ there is a constant
$\beta>0$ only depending on $K$ with the following property.
Let $q_0,\dots,q_k$ be a closed $(n,\beta)$-pseudo-orbit
contained in $K$.
Then there is a closed orbit for $\Phi^t$ contained in $C$ whose 
Hausdorff distance
to a closed characteristic curve defined by the
pseudo-orbit is at most $\delta$. 
From this Theorem \ref{Anosov} follows.
\qed

The Anosov closing lemma implies the
existence of many periodic orbits near any
non-wandering point of a compact $\Phi^t$-invariant
subset $K$ of $\hat{\cal Q}(S)$. However, as for
compact invariant hyperbolic sets in the usual sense of 
smooth dynamical systems (see \cite{HK95}), 
these periodic orbits are in general not contained in $K$.
The next corollary is an immediate adaptation of Corollary 6.4.19 of
\cite{HK95} and shows that the periodic orbits
can be chosen to be contained in an arbitrarily small
neighborhood of $K$.

\begin{corollary}\label{accumulation}
Let $K$ be a compact $\Phi^t$-invariant
subset of $\hat{\cal Q}(S)$ and let $U$ be an open
neighborhood of $K$.
Then every non-wandering point $q\in K$ is an
accumulation point of periodic points
of $\Phi^t$ whose orbits are entirely contained in $U$.
\end{corollary}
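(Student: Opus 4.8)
The plan is to derive Corollary~\ref{accumulation} as a direct consequence of the Anosov closing lemma (Theorem~\ref{Anosov}), using the definition of a non-wandering point to manufacture closed pseudo-orbits that stay in $K$ and whose shadowing periodic orbits stay in a prescribed neighborhood of $K$. First I would fix the compact $\Phi^t$-invariant set $K$ and the open neighborhood $U\supset K$, and let $\epsilon_1=\epsilon_1(K)$, $n=n(K)$, $b=b(K)$ be the constants furnished by the first part of Theorem~\ref{Anosov}. Since the shadowing constant $b$ is too coarse to guarantee that a shadowing orbit lies in $U$, I would instead shrink $U$ at the outset: choose a compact neighborhood $K\subset K'\subset U$ and observe that by uniform continuity of the flow on compact sets, there is $\delta_0>0$ such that the $\delta_0$-neighborhood of any orbit segment contained in $K$ is contained in $K'\subset U$. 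Then invoke the second part of Theorem~\ref{Anosov} with this $\delta_0$ to obtain $\epsilon_2=\epsilon_2(K,\delta_0)<\epsilon_1$ such that every closed $(n,\epsilon_2)$-pseudo-orbit contained in $K$ is $\delta_0$-shadowed by a periodic orbit; by the choice of $\delta_0$, that periodic orbit lies entirely in $U$.

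Next I would use the hypothesis that $q\in K$ is non-wandering. Fix any $\epsilon\in(0,\epsilon_2)$ and any $R>0$. By definition of non-wandering, the $\epsilon$-ball $B(q,\epsilon)$ satisfies $\Phi^T(B(q,\epsilon))\cap B(q,\epsilon)\neq\emptyset$ for some arbitrarily large $T$; in particular we may pick $T\geq n$ and a point $q'\in B(q,\epsilon)$ with $\Phi^T q'\in B(q,\epsilon)$. Now set $q_0=q_1=q'$ (or more transparently $q_0=q'$ with the single jump $d(\Phi^Tq_0, q_0)\le d(\Phi^Tq_0,q)+d(q,q_0)<2\epsilon<\epsilon_2$ after a small adjustment of $\epsilon$ — it is cleaner to demand $\Phi^Tq'\in B(q',\epsilon)$ directly, which non-wandering also gives). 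This single-breakpoint closed $(n,\epsilon_2)$-pseudo-orbit is contained in $K$, provided $q'$ can be taken in $K$: since $q$ is a non-wandering point \emph{of the restricted flow on $K$} (the statement is about $K$), one may take $q'\in B(q,\epsilon)\cap K$ and then the orbit segment $\{\Phi^tq'\mid t\in[0,T]\}$ lies in $K$ by invariance. Hence Theorem~\ref{Anosov} yields a periodic orbit, contained in $U$ by the paragraph above, which $\delta_0$-shadows this pseudo-orbit; in particular this periodic orbit passes within distance $\delta_0+\epsilon$ (or a comparable explicit bound) of $q$. Letting $\epsilon\to 0$ and $\delta_0\to 0$ (shrinking $K'$ toward $K$) produces periodic orbits in $U$ accumulating at $q$.

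The main obstacle is the bookkeeping that ties the shadowing constant $\delta$ in Theorem~\ref{Anosov} to genuine proximity of the resulting periodic orbit to the point $q$, while simultaneously keeping the periodic orbit inside $U$: one must choose the neighborhood parameters in the right order, namely first shrink the target neighborhood of $K$ (producing $\delta_0$), then extract $\epsilon_2$ from the closing lemma, and only then use non-wandering with $\epsilon<\epsilon_2$. A secondary point requiring care is that the characteristic arc of the trivial one-breakpoint pseudo-orbit is essentially the orbit segment $\{\Phi^tq'\mid t\in[0,T]\}$ together with a short closing arc, so its free homotopy class, and hence the periodic orbit shadowing it, is nontrivial precisely because $T\ge n$ forces the associated mapping class to be pseudo-Anosov (as established inside the proof of Theorem~\ref{Anosov}); this guarantees we actually obtain a genuine periodic orbit rather than a fixed point, and that distinct choices of $T$ produce infinitely many such orbits accumulating on $q$. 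Since this is exactly the content of Corollary 6.4.19 of \cite{HK95} transported to the present setting via Theorem~\ref{Anosov}, I would simply cite that corollary for the formal deduction once the pseudo-orbit construction is in place.
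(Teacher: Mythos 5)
Your proposal is correct and follows essentially the same route as the paper, which states Corollary \ref{accumulation} without further argument as an immediate adaptation of Corollary 6.4.19 of \cite{HK95} based on the closing lemma, Theorem \ref{Anosov}. Your one-breakpoint closed pseudo-orbit built from the non-wandering property of the restricted flow on $K$, with the parameters chosen in the order $\delta_0$, then $\epsilon_2(K,\delta_0)$, then $\epsilon<\epsilon_2$, supplies exactly the details that adaptation requires (the only bookkeeping nuance, namely that the shadowed characteristic arc also contains the short closing arc so that containment in $U$ really uses $\delta_0+2\epsilon$ rather than $\delta_0$ alone, is absorbed by letting $\epsilon\to 0$).
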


In the case of a topologically transitive
compact invariant set $K\subset \hat{\cal Q}(S)$ we can say more.

\begin{lemma}\label{dense}
Let $K$ be a compact $\Phi^t$-invariant topologically transitive
subset of $\hat{\cal Q}(S)$. Then for every 
$\sigma >0$ there is a periodic orbit for $\Phi^t$ whose
Hausdorff-distance to $K$ (as subsets of $\hat{\cal Q}(S)$)
is at most $\sigma$.
\end{lemma}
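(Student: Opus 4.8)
The plan is to deduce the lemma directly from the Anosov closing lemma (Theorem \ref{Anosov}) together with topological transitivity of $K$; the only real work is bookkeeping of constants and the translation of the shadowing statement of Definition \ref{shadow} into a Hausdorff bound in $\hat{\cal Q}(S)$. Fix $\sigma>0$ and set $\delta=\sigma/4$. Let $n=n(K)$ and $\epsilon_1=\epsilon_1(K)$ be the constants furnished by Theorem \ref{Anosov}, and choose $\epsilon_2\le\min\{\epsilon_2(K,\delta),\,\delta\}$, noting that $\epsilon_2\le\epsilon_2(K,\delta)<\epsilon_1$ and that shrinking $\epsilon_2$ only shrinks the class of pseudo-orbits under consideration; hence Theorem \ref{Anosov} still applies and every closed $(n,\epsilon_2)$-pseudo-orbit contained in $K$ is $\delta$-shadowed by a periodic orbit of $\Phi^t$ lying in its characteristic free homotopy class.

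Next I would use transitivity to produce a convenient closed pseudo-orbit. Since $K$ is compact and topologically transitive there is $q\in K$ whose forward orbit is dense in $K$; by compactness of $K$ and continuity of the flow one finds $T_0>0$ such that $\{\Phi^tq:0\le t\le T_0\}$ is $\delta$-dense in $K$ (cover $K$ by finitely many balls $B(\Phi^{t_y}q,\delta/2)$ and take $T_0=\max_y t_y$). Because $q$ has a dense orbit it lies in its own $\omega$-limit set, so there is $T'\ge\max\{T_0,n\}$ with $d(\Phi^{T'}q,q)\le\epsilon_2$. The single point $q_0=q_1=q$ together with return time $t_0=T'$ is then a closed $(n,\epsilon_2)$-pseudo-orbit (Definition \ref{pseudoorbit}), and it is contained in $K$ because $q\in K$ and $K$ is $\Phi^t$-invariant. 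Its characteristic arc $\gamma$ is the orbit segment $\beta:=\{\Phi^tq:0\le t\le T'\}$ followed by a closing arc of length smaller than $2\epsilon_2$ joining $\Phi^{T'}q$ to $q$, with both endpoints of that arc lying on $\beta$.

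Finally I would assemble the Hausdorff estimate. Theorem \ref{Anosov} yields a periodic orbit $\zeta\subset\hat{\cal Q}(S)$ that $\delta$-shadows the pseudo-orbit; applying Definition \ref{shadow}(2) to the lifts $\tilde\gamma,\tilde\zeta$ in ${\cal Q}^1(S)$ and using that the covering projection $\Pi$ is distance non-increasing, hence does not increase Hausdorff distances (see (\ref{distanceprojection})), the Hausdorff distance in $\hat{\cal Q}(S)$ between $\zeta$ and $\gamma$ is at most $\delta$. Since $\beta\subset\gamma$ and $\gamma\setminus\beta$ is an arc of length less than $2\epsilon_2\le2\delta$ with endpoints on $\beta$, the Hausdorff distance between $\gamma$ and $\beta$ is at most $2\delta$; and since $\beta\subset K$ while $\beta\supset\{\Phi^tq:0\le t\le T_0\}$ is $\delta$-dense in $K$, the Hausdorff distance between $\beta$ and $K$ is at most $\delta$. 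Adding the three bounds gives a Hausdorff distance of at most $\delta+2\delta+\delta=\sigma$ between the periodic orbit $\zeta$ and $K$, which proves the lemma. The main obstacle here is not conceptual but a matter of care: one must verify that the \emph{closed}-orbit version of Theorem \ref{Anosov} genuinely applies — which is exactly why a dense orbit, and hence recurrence and a short closing arc, is needed — and one must read Definition \ref{shadow} correctly, converting its shadowing statement (phrased via the projection $P$ to Teichm\"uller space and via lifts to ${\cal Q}^1(S)$) into the downstairs Hausdorff bound used above.
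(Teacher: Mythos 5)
Your proof is correct and follows essentially the same route as the paper's: use topological transitivity to produce a long orbit segment in $K$ that is $\delta$-dense in $K$ and nearly closes up, view it as a closed $(n,\epsilon_2)$-pseudo-orbit, and apply Theorem \ref{Anosov} to get a shadowing periodic orbit, finishing with the triangle inequality for Hausdorff distances. The only differences are bookkeeping ($\sigma/4$ versus the paper's $\sigma/2$) and your more explicit derivation of the dense, nearly recurrent segment via a dense forward orbit.
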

\begin{proof}
Let $K\subset \hat{\cal Q}(S)$ be a compact 
$\Phi^t$-invariant topologically
transitive set and let $\sigma >0$. Let $n=n(K)>0$,
$\epsilon_2=\epsilon_2(K,\sigma/2)<\sigma/2$ 
be as in Theorem \ref{Anosov}.
Since $K$ is topologically transitive 
by assumption, there is some $q\in K$ and 
there is some $T>n$ such that
$d(q,\Phi^Tq)<\epsilon_2$ and that moreover
the Hausdorff distance between the set $K$ and its subset
$B=\{\Phi^tq\mid 0\leq t\leq T\}$ is at most $\sigma/2$.

By Theorem \ref{Anosov}, applied to 
the closed $(n,\epsilon_2)$-pseudo-orbit
defined by the orbit segment $\{\Phi^tq\mid 0\leq t\leq T\}$,
there is a periodic orbit
for $\Phi^t$ whose Hausdorff distance to 
$B$ is at most $\sigma/2$. This means that the
Hausdorff distance between this orbit and
the set $K$ is at most $\sigma$ and shows the lemma.
\end{proof}

For a compact $\Phi^t$-invariant subset
$K\subset \hat{\cal Q}(S)$ denote 
by $h_{\rm top}(K)$ the topological
entropy of the restriction of $\Phi^t$ to $K$. For an
arbitrary subset $U\subset \hat{\cal Q}(S)$ and a number
$r>0$ let $n_U(r)$ be the number of all periodic orbits
of $\Phi^t$ of period at most $r$ which are contained in $U$.
The following corollary is another fairly immediate consequence
of Theorem \ref{Anosov}. Together with Corollary \ref{lowerbound} it 
shows Theorem \ref{thm3} from the introduction.

\begin{corollary}\label{topup}
Let $K\subset \hat{\cal Q}(S)$ be a
compact $\Phi^t$-invariant topologically transitive set.
Then for every open neighborhood $U$ of $K$ we have
\[h_{\rm top}(K)\leq \lim\inf_{r\to \infty}\frac{1}{r}\log n_U(r).\]
\end{corollary}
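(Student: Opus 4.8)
The plan is to deduce the lower bound for $\liminf_{r\to\infty}\frac1r\log n_U(r)$ from the variational principle together with Theorem \ref{Anosov}. By the variational principle, $h_{\rm top}(K)=\sup_\mu h_\mu(\Phi^t)$ where the supremum is taken over all $\Phi^t$-invariant Borel probability measures on $K$; moreover it suffices to work with ergodic measures, since the ergodic components of an invariant measure realize the entropy. Fix $\delta>0$ and choose an ergodic $\Phi^t$-invariant probability measure $\mu$ on $K$ with $h_\mu(\Phi^t)\geq h_{\rm top}(K)-\delta$. The strategy is now standard for expansive flows with the shadowing/closing property: use the Katok entropy formula (or a Bowen-ball counting argument, as in the proof of Proposition 3.2.14 or Theorem 4.5.10 of \cite{HK95}) to produce, for each small $\epsilon>0$, exponentially many \emph{orbit segments} of length roughly $r$ which are $\epsilon$-separated in the Bowen metric and whose endpoints nearly return, i.e. which close up to give $(n,\epsilon)$-pseudo-orbits contained in $K$; then invoke Theorem \ref{Anosov} to convert each such closed pseudo-orbit into a genuine periodic orbit of comparable period contained in the prescribed neighborhood $U$, and finally invoke expansiveness (Theorem \ref{expansive}) to show that distinct pseudo-orbits yield distinct periodic orbits, so the count is not collapsed.

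More concretely, the steps I would carry out are the following. First, fix the open neighborhood $U\supset K$ and, using Theorem \ref{Anosov}, fix $n=n(K)$, $\epsilon_1=\epsilon_1(K)$, and — for the target accuracy $\delta_0>0$ to be chosen small relative to the distance from $K$ to the complement of $U$ — the constant $\epsilon_2=\epsilon_2(K,\delta_0)<\epsilon_1$, so that every closed $(n,\epsilon_2)$-pseudo-orbit contained in $K$ is $\delta_0$-shadowed by a periodic orbit; by choosing $\delta_0$ small enough this periodic orbit is contained in $U$. Second, by Katok's formula for the metric entropy of an ergodic flow, for every small $\epsilon>0$ and every sufficiently large $T$ there is a set of cardinality at least $e^{(h_\mu-\delta)T}$ of points of $K$ whose length-$T$ orbit segments are pairwise $(T,\epsilon)$-separated, and (using Poincaré recurrence for $\mu$, after passing to a positive-measure subset on which the return time to a small ball is controlled) one may arrange that these orbit segments begin and end within $\epsilon_2/2$ of a common point, hence define closed $(n,\epsilon_2)$-pseudo-orbits of period comparable to $T$ contained in $K$. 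Third, apply Theorem \ref{Anosov} to each of these closed pseudo-orbits to obtain periodic orbits contained in $U$ whose periods differ from $T$ by a bounded additive constant. Fourth, observe that by Theorem \ref{expansive} (expansiveness with constant, say, $\delta'$) two periodic orbits in the compact invariant set $C\supset K$ supplied by Theorem \ref{Anosov} that both $\delta_0$-shadow $(T,\epsilon)$-separated pseudo-orbits with $\epsilon>2\delta'+2\delta_0$ must themselves be distinct; hence the number of distinct periodic orbits of period at most $T+\const$ contained in $U$ is at least $e^{(h_\mu-\delta)T}$. Letting $T\to\infty$ gives $\liminf_{r\to\infty}\frac1r\log n_U(r)\geq h_\mu-\delta\geq h_{\rm top}(K)-2\delta$, and letting $\delta\to0$ finishes the proof.

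The main obstacle is the third-to-fourth step interface: one must be careful that the map from separated closed pseudo-orbits to periodic orbits in $U$ is genuinely at-most-finite-to-one with a controlled multiplicity, so that the exponential lower bound survives. This is exactly where expansiveness enters — without it, infinitely many $\epsilon$-separated pseudo-orbits could in principle be shadowed by the same periodic orbit — and it is also where one must check that the periods are comparable (so that "period at most $r$" captures the full exponential family). The remaining delicate point is the recurrence bookkeeping in the second step: one needs the orbit segments to close up \emph{inside $K$} (so that Theorem \ref{Anosov} applies), which is handled by restricting to a subset of $K$ of $\mu$-measure bounded below on which the first return time to a fixed small ball is finite, and by the Kac lemma this costs only a bounded factor in the count. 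Since all of these are the standard ingredients in the proof of Corollary 6.4.19 and Theorem 18.5.7 of \cite{HK95}, adapted verbatim to the present setting using Theorem \ref{expansive} and Theorem \ref{Anosov} in place of the smooth hyperbolicity hypotheses, I would present this corollary as a direct transcription of that argument.
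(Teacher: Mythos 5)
Your overall mechanism is the same as the paper's (close up long orbit segments of $K$ into closed pseudo-orbits contained in $K$, feed them to Theorem \ref{Anosov}, and control the multiplicity of the resulting assignment to periodic orbits in $U$), but your front end is genuinely different: you generate the exponential family via the variational principle, Katok's entropy formula and Birkhoff/Poincar\'e recurrence for an ergodic measure of nearly maximal entropy, whereas the paper works directly with $(m,\delta)$-separated subsets of $K$ -- expansiveness (Theorem \ref{expansive}) guarantees these compute $h_{\rm top}(K)$ already at a fixed scale $\delta$ -- and uses topological transitivity together with compactness of $K\times K$ to connect $\Phi^mq$ back to $q$ by an orbit segment of uniformly bounded length $T\in[n,N]$, so every separated point immediately yields a closed $(n,\epsilon_2)$-pseudo-orbit of period $\leq m+N+2\epsilon_2$. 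The paper's route is more elementary (no measure theory at all) and gives a clean additive bound on the periods; in your route the return times to a fixed small ball are only controlled in a multiplicative window $[T,(1+\eta)T]$, the claim that the periods differ from $T$ by a bounded additive constant is not justified, and you must also arrange that the time window $[0,T]$ on which separation is detected lies inside the retained piece of orbit (e.g.\ by demanding visits to the ball slightly before time $0$ and slightly after time $T$). These are repairable bookkeeping issues.

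The one step that is actually wrong as written is your fourth step: the assertion that two periodic orbits which $\delta_0$-shadow distinct $(T,\epsilon)$-separated pseudo-orbits ``must themselves be distinct'', and the resulting count of at least $e^{(h_\mu-\delta)T}$ periodic orbits. The assignment is not injective: two points lying essentially on the same long almost-periodic orbit at different phases are $(T,\epsilon)$-separated, yet their closed pseudo-orbits are shadowed by the \emph{same} periodic orbit. Expansiveness cannot rule this out -- it determines an orbit only up to time shift, and the time shift is exactly the source of the non-injectivity; moreover your requirement $\epsilon>2\delta'+2\delta_0$ with $\delta'$ the expansiveness constant has the quantifiers backwards, since Katok's formula forces you to take $\epsilon$ small before choosing $\delta_0$. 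What is true, and what the argument needs, is a multiplicity bound growing only linearly in $T$: if two separated points are shadowed by the same periodic orbit, their shadowing points on that orbit must be a definite distance apart along the orbit, since otherwise the two original orbit segments would be $\delta$-close \emph{in a time-synchronized way} on all of $[0,T]$ (this synchronization uses that orbits are geodesics for the metric $d$, not merely the Hausdorff closeness furnished by the shadowing), contradicting separation; hence at most $O\bigl((T+N)/\epsilon\bigr)$ separated points map to one orbit, which costs only a polynomial factor and leaves the exponential growth rate intact. This is precisely how the paper closes the argument, via the pairwise disjoint segments $c(q)=\cup_{t\in(-\delta/8,\delta/8)}\Phi^t\zeta(q)$ and the resulting factor $4(m+N+2)/\delta$. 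With your fourth step replaced by this multiplicity argument (and the recurrence bookkeeping tightened as above), your proposal does give the corollary by a legitimately different route.
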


\begin{proof}
Let $K\subset \hat{\cal Q}(S)$ be a
topologically transitive compact $\Phi^t$-invariant
set and let $U$ be an open neighborhood of $K$.
Then there is a number $\beta>0$ such that $U$ contains
the $\beta$-neighborhood of $K$. 

Let $\delta<\beta$ be sufficiently small that
the $\delta$-neighborhood of every point in $K$ 
is contained in a contractible subset of $\hat{\cal Q}(S)$.
Let $n=n(K)>0,\epsilon_2=\epsilon_2(K,\delta/8)<1$ be as in 
Theorem \ref{Anosov}.
Since the Teichm\"uller flow on $K$ is topologically
transitive by assumption, by compactness of $K\times K$ 
there is a number $N>n$ with the
following property. Let $q,q^\prime\in K$; then there is some
$u \in K$ and some $T\in [n,N]$ 
with $d(u,q^\prime)<\epsilon_2$ and $d(\Phi^Tu,q)<\epsilon_2$.

A subset $E$ of $K$ is called
\emph{$(m,\delta)$-separated} for some $m\geq 0$ if
for any two points $q\not=u\in E$ we have
\begin{equation}
d(\Phi^tq,\Phi^tu)\geq \delta \text{ for some }t\in [0,m].
\end{equation}
Let $m>n$ and let $E_m\subset K$ be any 
$(m,\delta)$-separated set. Let $q\in E_m$. By the choice
of $N>n$ there is 
some $u\in K$ and some $T\in [n,N]$  such that 
$d(u,\Phi^mq)<\epsilon_2$ and
$d(\Phi^T u,q)<\epsilon_2$.
By Theorem \ref{Anosov}, the closed
$(n,\epsilon_2)$-pseudo-orbit
$q,u,q$ is $\delta/8$-shadowed by a periodic orbit which 
defines the characteristic free homotopy class of the pseudo-orbit.
Since periodic orbits for $\Phi^t$ in $\hat{\cal Q}(S)$ 
minimize the length in their free homotopy class,
the length of the periodic orbit does not exceed $m+N+2\epsilon_2$.
Moreover, by the choice of $\delta$ this periodic orbit 
is contained in $U$. There is a point $\zeta(q)$ on the 
orbit with $d(q,\zeta(q))\leq \delta/8$. In other words,
there is a map $\zeta$ which associates to every
point $q\in E_m$ a point $\zeta(q)\in U$ 
whose orbit under $\Phi^t$ is entirely contained in $U$ and 
is periodic of period at most
$m+N+2\epsilon_2$.

Since the points in the set $E_m$ are $(m,\delta)$-separated by
assumption and the orbits
of $\Phi^t$ are geodesics parametrized by arc length,
the orbit segments 
$c(q)=\cup_{t\in (-\delta/8,\delta/8)}\Phi^t\zeta(q)$  
$(q\in E_m)$ are pairwise disjoint. Thus for a fixed periodic
orbit $\gamma$ for $\Phi^t$ of length
at most $m+N+2\epsilon_2$ there are at most 
$4(m+N+2)/\delta$ distinct points $q\in E_m$ with
$\zeta(q)\in \gamma$. As a consequence, 
there are at least $\delta\,{\rm card}(E_m)/4(m+N+2)$ 
distinct periodic orbits
of period at most $m+N+2$ in $U$. This shows that  
the asymptotic growth as $m\to\infty$ 
of the maximal cardinality of 
an $(m,\delta)$-separated subset of $K$  
does not exceed the asymptotic
growth of the numbers $n_U(r)$ as $r\to \infty$. 
The corollary is now an immediate consequence from
the definition of the topological entropy of a continuous
flow on a compact space (recall also from
Theorem \ref{expansive} that the Teichm\"uller flow on
$K$ is expansive and hence for all sufficiently 
small $\delta>0$ its topological entropy is
just the asymptotic growth rate of maximal $(m,\delta)$-separated
sets as $m\to \infty$).
\end{proof}

\section{Lower bounds for the number of periodic orbits}

In this section we complete the proof of Theorem \ref{thm2}
from the introduction.
For this we continue to use the assumptions and
notations from Sections 2 and 3. In particular,
we always denote by
$d_T$ the Teichm\"uller metric on Teichm\"uller space
${\cal T}(S)$ for $S$. 

We begin with establishing the first part of Theorem \ref{thm2} which
is immediate from the work of Eskin and
Mirzakhani \cite{EM08}. Since the proof is 
short and easy, we include it for completeness.

The \emph{Poincar\'e series with exponent
$\alpha>0$} at a point
$x\in {\cal T}(S)$ 
is defined to be the series
\begin{equation}
\sum_{g\in {\rm Mod}(S)}e^{-\alpha d(x,gx)}.\end{equation}
The \emph{critical exponent} of ${\rm Mod}(S)$
is the infimum of all numbers $\alpha >0$
such that the Poincar\'e series with exponent
$\alpha$ converges. Note that this critical
exponent does not depend on the choice of $x$.
Athreya, Bufetov, Eskin and Mirzakhani \cite{ABEM06}
showed that the critical exponent of the Poincar\'e
series equals $h=6g-6+2m$ and that
the Poincar\'e series diverges at the critical exponent.

For $r>0$ and for
a compact set $K\subset {\cal Q}^1(S)/{\rm Mod}(S)$ let
$n^\cap_K(r)$ be the number of
all periodic orbits for the
Teichm\"uller flow of period at most $r$ which
\emph{intersect} $K$.
The next lemma is the first
part of Theorem 1.

\begin{lemma}\label{upperbound}
For every compact subset $K$ of
${\cal Q}^1(S)/{\rm Mod}(S)$ we have
\[\lim\sup_{r\to \infty}\frac{1}{r}\log n^\cap_K(r)\leq 6g-6+2m.\]
\end{lemma}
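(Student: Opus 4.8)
The plan is to bound the number of periodic orbits meeting $K$ by a Poincaré-series count in Teichmüller space, and invoke the Athreya--Bufetov--Eskin--Mirzakhani theorem that the critical exponent of $\mathrm{Mod}(S)$ equals $h=6g-6+2m$. First I would fix a point $x_0\in {\cal T}(S)$ and lift the compact set $K$ to a compact subset $\tilde K$ of ${\cal Q}^1(S)$; after enlarging $K$ we may assume $\tilde K$ contains a fundamental domain neighborhood and that $P\tilde K$ lies in the $R$-ball $B(x_0,R)$ for some fixed $R>0$. A periodic orbit of $\Phi^t$ of period $\leq r$ which intersects $K$ is represented by a pseudo-Anosov conjugacy class $[g]$ with translation length $\ell(g)\le r$ on the Teichmüller geodesic it determines, and by a point $q$ on that orbit with $Pq\in B(x_0,R)$. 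Then the axis of $g$ passes within distance $R$ of $x_0$, so $d_T(x_0,gx_0)\le \ell(g)+2R\le r+2R$.

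Next I would control the multiplicity: how many distinct periodic orbits of period $\le r$ meeting $K$ can correspond to a single group element $g$? Two periodic orbits give the same $g\in{\rm Mod}(S)$ only up to the choice of lift; since ${\rm Mod}(S)$ acts properly discontinuously on ${\cal Q}^1(S)$ and the orbit meets the fixed compact set $K$, the number of such lifts is bounded by a constant $c=c(K)$ (the number of group elements moving $\tilde K$ to within bounded distance). Hence
\[
n^\cap_K(r)\ \le\ c\cdot \#\{g\in {\rm Mod}(S)\ :\ d_T(x_0,gx_0)\le r+2R\}.
\]
So it remains to estimate the orbit-counting function $N(t)=\#\{g: d_T(x_0,gx_0)\le t\}$.

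Finally, the orbit-counting estimate follows from the Poincaré-series statement quoted in the excerpt: since the Poincaré series $\sum_g e^{-\alpha d_T(x_0,gx_0)}$ converges for every $\alpha>h$, a standard dyadic/summation-by-parts argument gives $N(t)\le C_\alpha e^{\alpha t}$ for every $\alpha>h$, hence $\limsup_{t\to\infty}\frac1t\log N(t)\le h$. Combining with the two displayed inequalities gives
\[
\limsup_{r\to\infty}\frac1r\log n^\cap_K(r)\ \le\ \limsup_{r\to\infty}\frac1r\log\bigl(c\,N(r+2R)\bigr)\ \le\ h=6g-6+2m,
\]
as desired. The only genuinely delicate point is the multiplicity bound — making precise that a periodic orbit meeting $K$ determines a conjugacy-class representative $g$ with $d_T(x_0,gx_0)$ controlled, with only boundedly many orbits per $g$; this uses proper discontinuity of the ${\rm Mod}(S)$-action on ${\cal Q}^1(S)$ together with the fact that periodic orbits minimize length in their free homotopy class, so that the geodesic representative and the group element are in bijective correspondence up to conjugacy. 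Everything else is routine once the counting function $N(t)$ is in hand.
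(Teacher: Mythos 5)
Your proposal is correct and follows essentially the same route as the paper: bound $n^\cap_K(r)$ by the orbit-counting function $N(r+\const)$ via the estimate $d_T(x_0,gx_0)\leq \ell(g)+2R$ for a conjugacy representative whose axis passes near the base point, then bound $N$ using convergence of the Poincar\'e series at any exponent $\alpha>h$, which is exactly the Athreya--Bufetov--Eskin--Mirzakhani input used in the paper. The only difference is cosmetic: your ``delicate'' multiplicity step is actually automatic, since distinct periodic orbits determine distinct pseudo-Anosov conjugacy classes and hence distinct chosen representatives $g$, so the map from orbits to group elements is injective and no proper-discontinuity argument is needed.
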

\begin{proof}
Let $\hat K$ be any compact subset
of the \emph{moduli space}
${\cal M}(S)={\cal T}(S)/{\rm Mod}(S)$ 
which is the closure of an open set. Let
$K_1\subset {\cal T}(S)$ be a relative compact
fundamental
domain for the action of ${\rm Mod}(S)$ on the
preimage $\tilde K$ of $\hat K$ in ${\cal T}(S)$. 
Let $D$ be the diameter
of $K_1$ and let $x\in K_1$ be any point.
Let $g\in {\rm Mod}(S)$ be a pseudo-Anosov element
whose \emph{axis} 
(i.e. the unique $g$-invariant Teichm\"uller geodesic
on which $g$ acts as a translation) 
projects to a closed geodesic $\gamma$ in
moduli space which intersects $\hat K$.
Then there is a point $\tilde x\in K_1$ which
lies on the axis of a conjugate of $g$ which
we denote again by $g$ for simplicity.
By the properties of an axis, the
length $\ell(\gamma)$ of the 
closed geodesic $\gamma$ equals
$d_T(\tilde x,g\tilde x)$. On the other hand,
we have
\begin{equation}\label{distlength}
d_T(x,gx)\leq d_T(\tilde x,g\tilde x)+2d_T(x,\tilde x)
\leq \ell(\gamma)+2D\end{equation}
by the definition of $D$, the
choice of $\tilde x$ and invariance of the Teichm\"uller metric under
the action of ${\rm Mod}(S)$.
Therefore, if we denote by $K\subset{\cal Q}^1(S)/{\rm Mod}(S)$ the
preimage of $\hat K\subset {\cal M}(S)$ under
the natural projection and if we define $N(r)$ for $r>0$
to be the number
of all $g\in {\rm Mod}(S)$ with
$d(x,gx)\leq r$, then
we have
\begin{equation}\lim\sup_{r\to \infty}
\frac{1}{r}\log n^\cap_K(r) \leq
\lim\sup_{r\to \infty}\frac{1}{r}\log N(r).
\end{equation}

Since the critical exponent of the
Poincar\'e series equals $6g-6+2m$, 
for every $\epsilon >0$ the Poincar\'e series converges at the
exponent $\alpha=6g-6+2m+\epsilon$. Let $c(\alpha)>0$ be its value.
Then for every $r >0$, the cardinality of the set
$\{g\in {\rm Mod}(S)\mid d(x,gx)\leq r\}$ does not
exceed $c(\alpha) e^{\alpha r}$ (note that the term
in the Poincar\'e series corresponding
to such an element of ${\rm Mod}(S)$ 
is \emph{not smaller} than $e^{-\alpha r}$).
This shows that $\lim\sup \frac{1}{r}\log N(r)\leq 6g-6+2m+\epsilon$.
Since $\epsilon >0$ and 
the compact set $\hat K\subset
{\cal M}(S)$ were arbitrarily chosen, the lemma follows.
\end{proof}

As an immediate consequence we obtain

\begin{corollary}\label{htopup2}
Let $K\subset {\cal Q}^1(S)/{\rm Mod}(S)$ be a compact
$\Phi^t$-invariant topologically transitive set.
Then $h_{\rm top}(K)\leq 6g-6+2m$.
\end{corollary}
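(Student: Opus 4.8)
The plan is to deduce the inequality from Corollary~\ref{topup} and Lemma~\ref{upperbound} by lifting $K$ to the finite covering $p\colon\hat{\cal Q}(S)\to{\cal Q}^1(S)/{\rm Mod}(S)$, where $\hat{\cal Q}(S)={\cal Q}^1(S)/\Gamma$ and $p$ is the quotient map for the action of the finite group $G={\rm Mod}(S)/\Gamma$ of order $N_0=[{\rm Mod}(S):\Gamma]$; note $p$ commutes with $\Phi^t$. First I would construct a compact, $\Phi^t$-invariant, \emph{topologically transitive} set $\hat K\subset\hat{\cal Q}(S)$ with $p(\hat K)=K$. Since $K$ is topologically transitive, a Baire category argument on the compact metric space $K$ produces a point $q\in K$ whose $\Phi^t$-orbit is dense in $K$. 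Choose a lift $\hat q\in p^{-1}(q)$ and put $\hat K=\overline{\{\Phi^t\hat q\mid t\in\mathbb{R}\}}$. Then $\hat K\subset p^{-1}(K)$, which is compact because $p$ is proper; hence $\hat K$ is a compact $\Phi^t$-invariant set, it is topologically transitive as the closure of a single orbit, and $p(\hat K)$ is a compact subset of ${\cal Q}^1(S)/{\rm Mod}(S)$ which contains the dense subset $\{\Phi^tq\mid t\in\mathbb{R}\}$ of $K$ and is contained in $K$, so $p(\hat K)=K$. Since $p|_{\hat K}\colon\hat K\to K$ is a continuous $\Phi^t$-equivariant surjection, $(K,\Phi^t)$ is a topological factor of $(\hat K,\Phi^t)$, and therefore $h_{\rm top}(K)\le h_{\rm top}(\hat K)$.

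Next I would compare periodic orbit counts up and downstairs. Fix a relatively compact open neighborhood $\hat U$ of $\hat K$ in $\hat{\cal Q}(S)$ (possible since $\hat K$ is compact and $\hat{\cal Q}(S)$ is locally compact, $\Gamma$ being torsion free) and set $L=p(\overline{\hat U})$, a compact subset of ${\cal Q}^1(S)/{\rm Mod}(S)$. If $\hat\gamma\subset\hat U$ is a periodic $\Phi^t$-orbit of period at most $r$, then $p(\hat\gamma)$ is a periodic $\Phi^t$-orbit of ${\cal Q}^1(S)/{\rm Mod}(S)$ of period at most $r$ (the induced map $\hat\gamma\to p(\hat\gamma)$ is a finite covering of closed orbits) which meets $L$. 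Moreover $\hat\gamma\mapsto p(\hat\gamma)$ is at most $N_0$-to-one: for a periodic orbit $\gamma$ of ${\cal Q}^1(S)/{\rm Mod}(S)$ and a point $x\in\gamma$, every periodic orbit lying over $\gamma$ meets $p^{-1}(x)$, these intersections are pairwise disjoint, and $p^{-1}(x)$ is a $G$-orbit and so has at most $N_0$ points. Hence $n_{\hat U}(r)\le N_0\,n^\cap_L(r)$ for all $r>0$.

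Finally I would chain the estimates. Corollary~\ref{topup}, applied to $\hat K\subset\hat{\cal Q}(S)$ and its open neighborhood $\hat U$, together with $n_{\hat U}(r)\le N_0\,n^\cap_L(r)$ and $\frac1r\log N_0\to0$, gives
\[h_{\rm top}(\hat K)\le\liminf_{r\to\infty}\frac1r\log n_{\hat U}(r)\le\liminf_{r\to\infty}\frac1r\log n^\cap_L(r)\le\limsup_{r\to\infty}\frac1r\log n^\cap_L(r),\]
and Lemma~\ref{upperbound} applied to the compact set $L$ bounds the last quantity by $6g-6+2m$. Combined with $h_{\rm top}(K)\le h_{\rm top}(\hat K)$ this gives $h_{\rm top}(K)\le 6g-6+2m$.

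I expect the main obstacle to be the bookkeeping around $p$: it is not an honest covering, since ${\rm Mod}(S)$ does not act freely on ${\cal Q}^1(S)$, so one must check carefully that fibres of $p$ still have at most $N_0$ points and that the orbit-closure construction genuinely yields a topologically transitive $\hat K$ with $p(\hat K)=K$. The remaining ingredients---the factor-map entropy inequality, the multiplicity bound on periodic orbits, and the passage to limits---are routine.
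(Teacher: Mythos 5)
Your argument is correct and follows essentially the same route as the paper: lift a transitive point to $\hat{\cal Q}(S)$, take the orbit closure $\hat K$, apply Corollary~\ref{topup} to a relatively compact neighborhood, and push periodic orbits back down with a multiplicity bound coming from the finite group ${\rm Mod}(S)/\Gamma$ before invoking Lemma~\ref{upperbound}. Your extra care about $p$ not being a genuine covering and about the at most $N_0$-to-one count only makes explicit what the paper states more briefly.
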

\begin{proof}
Let $K\subset {\cal Q}^1(S)/{\rm Mod(S)}$ be a compact $\Phi^t$-invariant 
topologically
transitive set, let $q\in K$ be a point whose
orbit under $\Phi^t$ is dense in $K$ and let
$\hat q\in \hat{\cal Q}(S)$ be a preimage of 
$q$ under the natural projection $\Theta:\hat{\cal Q}(S)\to 
{\cal Q}^1(S)/{\rm Mod}(S)$. Let $\hat K$ be the closure of the
orbit of $\hat q$; then $\hat K$ is a compact
$\Phi^t$-invariant topologically transitive
set. By equivariance of the Teichm\"uller flow under the
projection $\Theta$, this set is 
mapped by $\Theta$
onto $K$. Moreover, by
Corollary \ref{topup}, for every open
relative compact neighborhood $U$ of $\hat K$ we have
\begin{equation}\label{uptopes}
h_{\rm top}(K)\leq h_{\rm top}(\hat K)\leq 
\lim\inf_{r\to \infty}\frac{1}{r}\log n_U(r).
\end{equation} 

Now the projection $\Theta$ 
maps periodic orbits for 
$\Phi^t$ in $U$ of period at most $r$ 
to periodic orbits for $\Phi^t$ of period at most $r$ 
which are contained in the relative compact set  
$\Theta(U)\subset {\cal Q}^1(S)/{\rm Mod}(S)$. If the periodic
orbits $\gamma_1\not=\gamma_2$ in $U$ are mapped
to the same periodic orbit in $\Theta(U)$ then there
is some element $g$ from the factor group 
$G={\rm Mod}(S)/\Gamma$ which maps $\gamma_1$ to $\gamma_2$.  
Since $G$ is finite, the number of distinct periodic orbits in
$U$ which are mapped to the single orbit 
in $\Theta(U)$ is uniformly bounded. Therefore by
Lemma \ref{upperbound} we have
\begin{equation}
\lim\inf_{r\to \infty}\frac{1}{r}\log n_U(r)
\leq \lim\inf_{r\to \infty}\frac{1}{r} \log
n_{\Theta(U)}(r)\leq 6g-6+2m.
\end{equation}
This shows the corollary.
\end{proof}

Now we are ready for the proof of 
the second part of Theorem 2 from the
introduction.

\begin{proposition}\label{final}
For every $\epsilon >0$ there is a compact 
$\Phi^t$-invariant subset $K$ of ${\cal Q}^1(S)/{\rm Mod}(S)$ with
\[\lim\inf_{r\to \infty}\frac{1}{r}\log n_K(r)\geq 6g-6+2m-\epsilon.\]
\end{proposition}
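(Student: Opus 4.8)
The plan is to realize the entropy $h=6g-6+2m$ as a limit of topological entropies of compact invariant sets, and then to convert that entropy into a lower bound on the number of periodic orbits via the Anosov closing lemma (Theorem \ref{Anosov}) and Corollary \ref{topup}. Since $h$ is the entropy of the $\Phi^t$-invariant Lebesgue measure $\lambda$ on ${\cal Q}^1(S)/{\rm Mod}(S)$, and $\lambda$ is ergodic and mixing, the Brin--Katok / Shannon--McMillan--Breiman theorem together with the fact that $\lambda$ gives full measure to the recurrent locus produces, for every $\epsilon>0$, a compact set $C_\epsilon\subset {\cal Q}^1(S)/{\rm Mod}(S)$ of positive $\lambda$-measure on which the relevant Birkhoff/entropy averages converge uniformly well; in particular one gets a large number of $(m,\delta)$-separated orbit segments of length roughly $m$ starting and ending in $C_\epsilon$, with cardinality growing like $e^{(h-\epsilon)m}$. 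First I would make this precise: fix a small $\delta>0$ and use a Rokhlin tower / Katok's formula for measure-theoretic entropy to find, for all large $m$, a set of at least $e^{(h-\epsilon)m}$ orbit segments $\{\Phi^tq_i\mid 0\le t\le m\}$ with $q_i$ recurrent, that are pairwise $(m,\delta)$-separated and whose forward iterates under $\Phi^m$ return $\delta$-close to $q_i$.

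Next I would feed these almost-closed segments into the closing lemma. Because each $q_i$ is recurrent, the second part of Lemma \ref{quasigeodesic} applies and the characteristic arc of the pseudo-orbit $q_i,q_i$ projects to an unparametrized quasi-geodesic in ${\cal C}(S)$; hence by Theorem \ref{Anosov} (applied with a suitable uniform compact invariant set, or directly in the orbifold using the remark after Definition \ref{shadow}) there is a genuine periodic orbit $\gamma_i$ which $\delta$-shadows the segment and is contained in a fixed compact neighborhood. The $(m,\delta)$-separation of the segments means the periodic orbits $\gamma_i$ are genuinely distinct, and in fact (as in the proof of Corollary \ref{topup}) at most $O(m/\delta)$ of the $q_i$ can share a single $\gamma_i$, so we obtain at least $c\,\delta\,e^{(h-\epsilon)m}/m$ distinct periodic orbits of period at most $m+O(1)$, all contained in a single compact set $K_\epsilon\subset {\cal Q}^1(S)/{\rm Mod}(S)$ independent of $m$. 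Letting $K$ be the closure of the union of these periodic orbits (which is compact and $\Phi^t$-invariant, being contained in the fixed compact set) yields $\liminf_{r\to\infty}\tfrac1r\log n_K(r)\ge h-\epsilon$, which is the assertion.

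The main obstacle is the first step: producing the exponentially many $(m,\delta)$-separated, nearly-returning orbit segments whose endpoints sit in a controlled compact set while retaining the entropy lower bound $h-\epsilon$. Uniform separation and uniform compactness of the endpoints are in tension with the fact that $\lambda$ is only a finite measure on a non-compact space, so one must choose the compact set $C_\epsilon$ of $\lambda$-measure close enough to $1$ that a definite proportion of orbit segments of length $m$ spend almost all their time in $C_\epsilon$ (an ergodic-theorem estimate), and then count $(m,\delta)$-separated subsegments among those using the lower entropy bound from Katok's formula; the quasi-geodesic hypothesis needed for the closing lemma must be checked along these segments, which is where recurrence of the endpoints and Lemma \ref{quasigeodesic}(2) are used. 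Once this packing estimate is in hand, the passage to periodic orbits and to the final counting bound is essentially bookkeeping along the lines already carried out in Corollary \ref{topup}.
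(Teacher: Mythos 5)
Your plan has a genuine gap at its core, and it is exactly the point the paper identifies as the hardest one: keeping everything inside a \emph{fixed} compact set. The closing lemma (Theorem \ref{Anosov}) produces a periodic orbit whose lift stays within bounded Hausdorff distance of the \emph{entire} characteristic arc, so the periodic orbit follows every cusp excursion of the orbit segment $\{\Phi^tq_i\mid 0\le t\le m\}$. For $\lambda$-typical points these excursions have depth growing unboundedly in $m$ (knowing that a definite proportion of segments spend ``almost all their time'' in $C_\epsilon$ still allows single excursions of duration a fixed fraction of $m$, hence of unbounded depth), so the periodic orbits you produce are \emph{not} contained in one compact set $K_\epsilon$ independent of $m$, and the closure of their union is not compact. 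To repair this you would need exponentially many ($\geq e^{(h-\epsilon)m}$) separated segments confined \emph{entirely} to a fixed compact set; but that is essentially the statement being proved (via Theorem \ref{thm3} it amounts to compact invariant sets having topological entropy near $h$), and neither the Shannon--McMillan--Breiman theorem nor Katok's formula gives it without an additional confinement/large-deviation input that the paper never uses. A second, related gap: the constants in the tools you invoke are not uniform over your exponentially many base points. Theorem \ref{Anosov} requires the pseudo-orbit to lie in a compact $\Phi^t$-\emph{invariant} set (the constants $n,\epsilon_1,\epsilon_2$ depend on it), which you do not have, and the alternative, Lemma \ref{quasigeodesic}(2), has constants $\epsilon_0(q),n_0(q)$ attached to a single recurrent point $q$; you apply it at the many different points $q_i$ with no uniformity available.

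The paper's proof is built precisely to avoid both problems, and by a different mechanism than entropy: it fixes \emph{one} recurrent Lebesgue-typical point $q_0$, uses the $e^{ht}$-scaling of the conditional measures $\lambda_q$ on strong unstable manifolds together with a Vitali covering argument on $\partial{\cal C}(S)$ to produce, for each large $T$, at least $e^{(h-\epsilon)T}$ elements $g_1,\dots,g_k\in{\rm Mod}(S)$ with nested disjoint images $g_iD(q_1,\chi)\subset D(q_1,\chi)$. The nesting makes $g_1,\dots,g_k$ generate a free semigroup (which settles distinctness), all pseudo-orbits built from words in the $g_i$ have their breakpoints within $\epsilon_0(q_0)$ of the single point $q_0$ (so Lemma \ref{quasigeodesic}(2) applies with one set of constants), the large curve-graph displacement between consecutive breakpoints upgrades the unparametrized quasi-geodesics to parametrized ones via Lemma 2.4 of \cite{H10}, and Theorem \ref{teichcurvecomp}(1) then forces every axis through a fixed thick neighborhood of $Pq_1$, confining all periodic orbits to one compact invariant set; finally conjugacy overcounting is controlled by a linear factor. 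If you want to salvage your entropy-based route, you must replace the SMB/Katok step by a count of separated orbit segments that remain in a fixed compact set, which is exactly what the Vitali/free-semigroup construction supplies.
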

\begin{proof} 
As in Section 2, let ${\cal F\cal M\cal L}\subset {\cal P\cal M\cal L}$ 
be the ${\rm Mod}(S)$-invariant Borel subset of all 
projective measured geodesic
laminations whose support is minimal and fills up $S$ and let
$F:{\cal F\cal M\cal L}\to \partial{\cal C}(S)$
be the continuous
${\rm Mod}(S)$-equivariant  surjection
which associates to a projective measured geodesic
lamination in ${\cal F\cal M\cal L}$ its support.
Let $\pi:{\cal Q}^1(S)\to {\cal P\cal M\cal L}$ be 
the natural projection as defined in (\ref{pi}) and define
\begin{equation}
{\cal A}=\pi^{-1}{\cal F\cal M\cal L}\subset {\cal Q}^1(S).
\end{equation}

Let $\lambda$ be the $\Phi^t$-invariant probability measure
on ${\cal Q}^1(S)/{\rm Mod}(S)$ in the Lebesgue measure class constructed
in \cite{M82,V86}. This measure is ergodic and mixing
under the Teichm\"uller flow, with full support. In particular,
the $\Phi^t$-orbit of $\lambda$-almost every point 
$q\in {\cal Q}^1(S)/{\rm Mod}(S)$ 
returns to every neighborhood of $q$ for arbitrarily
large times. 
The measure $\lambda$ lifts to a
${\rm Mod}(S)$-invariant $\Phi^t$-invariant Radon
measure $\tilde \lambda$ on ${\cal Q}^1(S)$ of full support which
gives full measure to the ${\rm Mod}(S)$-invariant Borel 
set ${\cal A}$ \cite{M82}.

The Lebesgue measure $\tilde\lambda$ on ${\cal Q}^1(S)$
is absolutely continuous
with respect to the strong unstable foliation. More
precisely, for every $q\in {\cal Q}^1(S)$ 
there is a natural 
conditional measure $\tilde\lambda_q$ for $\tilde \lambda$
on the strong unstable
manifold $W^{su}(q)$, and these
conditional measures transform under the Teichm\"uller flow
via $d\tilde\lambda_{\Phi^tq}\circ \Phi^t=e^{ht}d\tilde\lambda_q$ where
$h=6g-6+2m$ as before. The image under the projection $\pi$ of the
measure $\tilde \lambda_q$ on $W^{su}(q)$ 
is a locally finite Borel measure $\lambda_q$ 
on the open dense subset of ${\cal P\cal M\cal L}$
of all projective measured geodesic laminations which together
with $\pi(-q)$ jointly fill up $S$. 
The measures $\lambda_q$ are all absolutely continuous, and they
depend continuously on $q\in {\cal Q}^1(S)$ in the weak$^*$-topology.
Moreover, for each $q$ the measure $\lambda_q$ gives full measure
to the set ${\cal F\cal M\cal L}$ and hence it can be
mapped via the surjection $F$ to a measure on $\partial{\cal C}(S)$
which we denote again by $\lambda_q$.

Recall from (\ref{distance}) and (\ref{deltacompare})
the definition of the distances 
$\delta_x$ $(x\in {\cal T}(S))$ on $\partial{\cal C}(S)$
and their properties.
For $q\in {\cal A}$ and $\chi>0$ define
\[D(q,\chi)\subset \partial{\cal C}(S)\] to be
the closed $\delta_{Pq}$-ball of radius
$\chi$ about $F\pi(q)\in \partial{\cal C}(S)$.
Note that $D(q,\chi)$ contains the image under
the map $F$ of the ball $B_{\delta_{Pq}}(\pi(q),\chi)$
used in Section 4. Note moreover that $D(q,\chi)$ depends on 
$q$ and not only on its center $F\pi(q)$ and the radius $\chi$
since for $t\not =0$ the distances $\delta_{Pq}$ and 
$\delta_{P\Phi^tq}$ do not coinicide,

Let $q_0\in {\cal Q}^1(S)/{\rm Mod}(S)$ be a typical point for 
the Lebesgue measure $\lambda$ (so that the Birkhoff ergodic
theorem holds true for $q_0$) 
and let $q_1$ be a lift of $q_0$ to ${\cal Q}^1(S)$. 
Assume without loss of generality that $Pq_1$ is not
fixed by any element of ${\rm Mod}(S)$. This is
possible since the set of points in ${\cal T}(S)$ which
are stabilized by a non-trivial element of ${\rm Mod}(S)$
is closed and nowhere dense and since the Lebesgue measure
is of full support.

Let $m>1$ be 
as in the second part of Lemma \ref{quasigeodesic}.
We may assume that the image under the map $\Upsilon_{\cal T}$ of
every Teichm\"uller geodesic in ${\cal T}(S)$ is an
unparametrized $m$-quasi-geodesic in ${\cal C}(S)$.
Since $q_0$ is a typical point for the 
Lebesgue measure, the unparametrized
quasi-geodesic $t\to \Upsilon_{\cal T}(P\Phi^tq_1)$ is 
of infinite diameter.

Let $\kappa>0$ be as in inequality (\ref{deltacompare}).
By Lemma \ref{expansion} and the inequality
(\ref{deltacompare}), there is a number
$\alpha>0$ depending on $m$ and 
there is a neighborhood $V$ of
$q_1$ in ${\cal Q}^1(S)$ of diameter at most
$\log 2/\kappa$ and a number $T_0 >0$
such that 
\begin{equation}\label{tdetermin}
\delta_{P\Phi^tu}\geq 16\delta_{Pu}\text{ on }
D(\Phi^tu,\alpha)\text{ for } t\geq T_0,\,u\in V\cap {\cal A}.
\end{equation}

Since $q_0$ is 
recurrent and hence the vertical measured geodesic lamination
of $q_1$ is uniquely ergodic and fills up $S$, by 
the second part of Lemma 3.2 of 
\cite{H09} there is a number $\chi\leq \alpha/4$ such that
\begin{equation}\label{vest}
F\pi(V\cap {\cal A}\cap W^{su}(q_1))\supset D(q_1,\chi).
\end{equation}

Let $\epsilon_0=\epsilon_0(q_0)>0$
be as in the second part of Lemma \ref{quasigeodesic}.
We may assume that the $\epsilon_0$-neighborhood of 
$q_0$ is contained in a contractible subset of 
${\cal Q}^1(S)/{\rm Mod}(S)$.
By continuity, there
is a compact
neighborhood $K\subset V$ of $q_1$ with the following
properties.
\begin{enumerate}
\item The diameter of $K$ does not exceed 
$\max\{\epsilon_0, (\log 2)/\kappa\}$.
\item $F\circ \pi(K\cap {\cal A})\subset D(q_1,\chi/4)$.
\end{enumerate}
By the second requirement for $K$, if
$q,u\in K\cap {\cal A}$ 
then $\delta_{Pq_1}(F\pi(u),F\pi(q))\leq \chi/2$. The first
property of $K$ together with the
relation (\ref{deltacompare}) for the
distances $\delta_x$ $(x\in {\cal T}(S))$ then implies that
$\delta_{Pu}(F\pi(q),F\pi(u))\leq \chi$ and 
$D(q,\chi)\subset D(u,4\chi)$.

Following \cite{F69}, a \emph{Borel covering
relation} for a Borel subset $C$ of a topological 
space $X$ is a family ${\cal V}$ of
pairs $(x,V)$ where $V\subset X$ is a Borel set,
where $x\in V$ and such that
\begin{equation}
C\subset \bigcup \{V\mid (z,V)\in {\cal V}\text{ for some }
z\in C\}.\end{equation}

For $\chi>0$ and the neighborhood 
$K\subset {\cal Q}^1(S)$ of $q_1$ as above define
\begin{align}\label{vitalirelation}
{\cal V}_{q_0,\chi,K}=& \{(F\pi(q),g D(q_1,\chi))\mid \\
& q\in W^{su}(q_1)\cap {\cal A},g\in 
{\rm Mod}(S),gK\cap \cup_{t>0}\Phi^tq\not=
\emptyset\}.\notag
\end{align}
By Proposition 3.5 of \cite{H09}, via possibly decreasing the
size of $\chi$ and $K$ we may assume that 
the covering relation ${\cal V}_{q_0,\chi,K}$ is a 
\emph{Vitali relation} for the measure $\lambda_{q_1}$ on
$\partial {\cal C}(S)$.
In our context, this means that for every $T>0$ 
there is a covering of $\lambda_{q_1}$-almost all of 
$D(q_1,\chi/4)$ by pairwise disjoint sets from the relation 
of the form 
\[V(g,t)=(F\pi(u),gD(q_1,\chi))\] where
$u\in W^{su}(q_1)\cap {\cal A},F\pi(u)\in D(q_1,\chi/4)$, 
$g\in {\rm Mod}(S)$ and where $t\geq T$ is such that
$\Phi^tu\in gK$ (we refer to Section 3 of \cite{H09} for a detailed
discussion).

Since the measures $\lambda_q$ and
the distances $\delta_{Pq}$ on $\partial {\cal C}(S)$
depend continuously on 
$u\in {\cal Q}^1(S)$, there is a number 
$a\leq \lambda_{q_1}D(q_1,\chi/4)$ such that
$\lambda_qD(u,\chi)\in [a,a^{-1}]$ for all 
$q\in K,u\in K\cap {\cal A}$. By the transition properties
for the measures $\lambda_u$ and
invariance under the action of the mapping class group,
if $g\in {\rm Mod}(S)$, if $u\in W^{su}(q_1)$ and if
$t>0$ are such that $\Phi^tu\in gK$ for some
$t>0$ then $\lambda_{q_1}(gD(q_1,\chi))\in 
[ae^{-ht},e^{-ht}/a]$ (compare the discussion in \cite{H09}).
   
Let $n_0=n_0(q_0)>0$ be as in the second part of 
Lemma \ref{quasigeodesic}.
Let $\epsilon >0$ and let 
$T(\epsilon)>\max\{T_0,n_0\}+2$ be sufficiently large
that 
\[\int_{T(\epsilon)}^\infty e^{-\epsilon s}ds\leq e^{-2h}a^2.\]
Choose a 
covering of $\lambda_{q_1}$-almost all of 
$D(q_1,\chi/4)$ by pairwise disjoint sets 
$V(g,t)=(F\pi(u),gD(q_1,\chi))$
from the relation 
${\cal V}_{q_0,\chi,K}$ where
$u\in W^{su}(q_1)\cap {\cal A},F\pi(u)\in D(q_1,\chi/4)$, 
$g\in {\rm Mod}(S)$ and where $t\geq T(\epsilon)$ is such that
$\Phi^tu\in gK$. 
By the inclusion (\ref{vest}), we have $u\in V$ and therefore
from the assumption $T(\epsilon)\geq T_0$ 
and the estimate (\ref{tdetermin}) we deduce that
$D(\Phi^tu,4\chi)\subset D(u,\chi/4)\subset D(q_1,\chi)$.
On the other hand, we have $\Phi^tu\in gK$ 
and hence 
\begin{equation}\label{nesting}
gD(q_1,\chi)\subset
D(\Phi^tu,4\chi)\subset D(q_1,\chi).
\end{equation}

The total $\lambda_{q_1}$-mass of the balls from the covering is
at least $\lambda_{q_1}D(q_1,\chi/4)\geq a$. Therefore 
there is a number $T>T(\epsilon)+2$ such that
the total volume of those balls $V(g,t)$ 
from the covering which 
correspond to a parameter $t\in [T-2,T-1]$ 
is at least $e^{-\epsilon T}e^{2h}/a$. Now the 
$\lambda_{q_1}$-volume of each such ball is at most
$e^{-h(T-2)}/a$ and hence   
the number of these balls is at least 
$e^{(h-\epsilon)T}$.

Let $\{g_1,\dots,g_k\}\subset {\rm Mod}(S)$ 
be the subset of ${\rm Mod}(S)$ defining these balls.
By (\ref{nesting}) above, for any $i,j$ we
have
\[g_jg_iD(q_1,\chi)\subset g_jD(q_1,\chi).\]
Thus the sets $g_jg_iD(q_1,\chi)$ $(i,j=1,\dots,k)$ are
pairwise disjoint. Namely, the sets 
$g_jD(q_1,\chi)$ $(j=1,\dots,k)$ are pairwise disjoint,
and for each $j$ the sets 
$g_jg_iD(q_1,\chi)\subset g_jD(q_1,\chi)$ 
$(i=1,\dots,k)$ are pairwise disjoint
as well.
By induction, we conclude that for any two distinct words
$w_1=g_{i_1}\cdots g_{i_\ell}$ and $w_2=g_{j_1}\cdots g_{j_m}$
in the letters $g_1,\dots,g_k$, viewed as elements of
${\rm Mod}(S)$, the images of $D(q_1,\chi)$ under $w_1,w_2$ are
either disjoint or properly contained in each other.
This shows that the elements $g_1,\dots,g_k$ generate a
free semi-subgroup $\Lambda$ of ${\rm Mod}(S)$.

Since $T(\epsilon)\geq n_0$, each word $w$ 
of length $\ell \geq 1$ in the letters $g_1,\dots,g_k$ defines
a closed $(n_0,\epsilon_0)$-pseudo-orbit $u_0,\dots, u_\ell$ 
in ${\cal Q}^1(S)/{\rm Mod}(S)$ with
$d(u_i,q_0)<\epsilon_0$. This pseudo-orbit consists of 
the successive projections
to ${\cal Q}^1(S)/{\rm Mod}(S)$ of flow lines
$\{\Phi^tu\mid t\in [0,\tau]\}$ where $u\in V\cap W^{su}(q_1)\cap {\cal A}$
and $\tau\in [T-2,T-1]$ are
such that $F\pi(u)\in g_jD(q_1,\chi/4)$ for some $j\leq k$ and 
$\Phi^\tau u\in g_jK$. 
Thus by the second part of 
Lemma \ref{quasigeodesic}, if $\tilde \gamma$ is 
a lift to ${\cal Q}^1(S)$ of a characteristic
arc of such a pseudo-orbit then $\Upsilon_{\cal T}(\tilde \gamma)$
is a biinfinite unparametrized $m$-quasi-geodesic
in ${\cal C}(S)$ which is
invariant under the element of $\Lambda\subset {\rm Mod}(S)$ defined by $w$.
In particular, this element
is pseudo-Anosov, and its conjugacy class defines
the characteristic free homotopy
class of the closed pseudo-orbit. 

The length of the
periodic orbit of $\Phi^t$ determined by $w$ 
does not exceed the length of a characteristic closed
curve for the pseudo-orbit and hence its is not bigger than
$T\ell$.
Moreover, since by the choice of $n_0$ for any $s<t$ 
with the property that $\tilde \gamma(s),\tilde \gamma(t)$ 
project to distinct breakpoints of $\gamma$ the distance
between $\Upsilon_{\cal T}(\tilde \gamma(s)),
\Upsilon_{\cal T}(\tilde \gamma(t))$ is at least
$2c(m)$, it follows from Lemma 2.4 of \cite{H10} that 
the unparametrized $m$-quasi-geodesic $\Upsilon_{\cal T}(\tilde \gamma)$ 
is in fact
a \emph{parametrized $p$-quasi-geodesic} for some 
$p>m$. Using once more the first part of 
Theorem \ref{teichcurvecomp}, this implies
that the axis of the element of $\Lambda\subset {\rm Mod}(S)$ defined by
$w$ passes through a fixed compact
neighborhood $B$ of $Pq_1$ in ${\cal T}(S)$, and the projection 
of its unit tangent line to ${\cal Q}^1(S)/{\rm Mod}(S)$ 
is a periodic orbit
for $\Phi^t$ which is contained in 
a compact subset $C_0$ of ${\cal Q}^1(S)/{\rm Mod}(S)$ not depending on $w$.
If we denote by $C$ the closed subset of $C_0$ of all points
$z\in C_0$ whose orbit under $\Phi^t$ is entirely contained in 
$C_0$ then each of these orbits is contained in 
$C$. 

The above argument does not immediately imply that
the asymptotic 
growth rate of the number of periodic orbits in $C$ is
at least $h-\epsilon$. Namely,
periodic orbits of the Teichm\"uller flow on
${\cal Q}^1(S)/{\rm Mod}(S)$ correspond to
\emph{conjugacy classes} of pseudo-Anosov elements
in ${\rm Mod}(S)$. Thus if we want to count periodic
orbits for $\Phi^t$ in ${\cal Q}^1(S)/{\rm Mod}(S)$ using the 
semi-subgroup $\Lambda$ of ${\rm Mod}(S)$ constructed above, 
then we have to identify those elements of $\Lambda$ 
which are conjugate in ${\rm Mod}(S)$.

For this recall that the axis of each element
of the semi-subgroup $\Lambda$ of ${\rm Mod}(S)$ passes
through the fixed compact neighborhood $B$ of $Pq_1$.
Thus if $\gamma,\zeta$ is the axis of 
$v,w\in \Lambda$ and if $v,w$ are conjugate
in ${\rm Mod}(S)$ then 
there is some $b\in {\rm Mod}(S)$ with 
$w=b^{-1}vb$ and the following additional property.
Let $\gamma[0,\tau]$ be a fundamental domain for the
action of $v$ on $\gamma$ 
and such that $\gamma(0)\in B$. 
Such a fundamental domain always exists, perhaps after
a reparametrization of $\gamma$.
Then there is some $t\in [0,\tau]$ such that
$b^{-1}\gamma(t)\in B$. 

As a consequence, the number of all elements $w\in \Lambda$ which
are conjugate to a fixed element $v\in \Lambda$ is 
bounded from above by the number of elements
$b\in {\rm Mod}(S)$ with $bB\cap \gamma[0,\tau]\not=\emptyset$.
In particular, if $D$ is the diameter of $B$ then
this number does not exceed the cardinality of the
set 
\begin{equation}
\{b\in {\rm Mod}(S)\mid d_{\cal T}(bPq_1,\gamma[0,\tau])\leq D\}.
\end{equation}  
However, this cardinality is bounded from
above by a universal multiple of $\tau$.
Therefore there is a constant $c>0$ such that
for all sufficiently large $r>0$ 
the number of periodic
orbits of $\Phi^t$ contained in $C$ of length at most
$r$ is not smaller than $e^{(h-\epsilon)r}/cr$. 
This completes the proof of the proposition.
\end{proof}

{\bf Remarks:}

1. Proposition \ref{final} is equally valid, with identical
proof, for the Teichm\"uller flow $\Phi^t$ on $\hat{\cal Q}(S)$.
Together with Theorem \ref{thm3} 
it implies that the metric entropy $h$ of the
unique $\Phi^t$-invariant Lebesgue measure 
on $\hat{\cal Q}(S)$ in the Lebesgue measure class 
equals the supremum of the topological entropies of the
restriction of $\Phi^t$ to compact invariant subsets
of $\hat{\cal Q}(S)$. In \cite{BG07}, this fact was established
for the Teichm\"uller flow on the moduli space of abelian 
differentials using symbolic dynamics.
 
2. The abundance of orbits of the Teichm\"uller flow
which entirely remain in some compact set (depending on the orbit)
was earlier
established by Kleinbock and Weiss \cite{KW04}. They show that
this set is of full Hausdorff dimension.

\bigskip

{\bf Acknowledgement:} This work was
done in spring 2007
while I visited the Bernoulli center in Lausanne.
I thank Alain Valette for inviting me there, and
I thank the center for its hospitality
and the excellent working conditions it provided.

\bigskip

\noindent
MATHEMATISCHES INSTITUT DER UNIVERSIT\"AT BONN\\
ENDENICER ALLEE 60\\
D-53115 BONN, GERMANY\\
e-mail: ursula@math.uni-bonn.de

\end{document}